\definecolor{links}{rgb}{0.65, 0.00, 0.36}
\DeclarePairedDelimiter\parens\lparen\rparen
\DeclarePairedDelimiter\floors\lfloor\rfloor
\DeclarePairedDelimiter\ceils\lceil\rceil
\DeclarePairedDelimiter\verts\lvert\rvert
\DeclarePairedDelimiter\angles\langle\rangle
\DeclarePairedDelimiter\braces\lbrace\rbrace
\DeclarePairedDelimiterX\set[2]\lbrace\rbrace{#1 \mathrel{\delimsize\vert} #2}
\newcommand{\DeclareAbbrevation}[2]{\newcommand{#1}{\@ifnextchar{.}{#2}{#2.\@\xspace}}}
\DeclareAbbrevation{\ie}{i.e}
\DeclareAbbrevation{\eg}{e.g}
\DeclareAbbrevation{\cf}{cf}
\DeclareAbbrevation{\etc}{etc}
\DeclareAbbrevation{\resp}{resp}
\DeclareAbbrevation{\ibid}{ibid}
\DeclareAbbrevation{\ca}{ca}
\theoremstyle{oupdefinition}
\newtheorem{definition}{Definition}[section]
\theoremstyle{oupplain}
\newtheorem{lemma}[definition]{Lemma}
\newtheorem{theorem}[definition]{Theorem}
\newtheorem{proposition}[definition]{Proposition}
\newtheorem{corollary}[definition]{Corollary}
\newenvironment{manualtheorem}[1]{%
  \manualtheoreminner
}{\endmanualtheoreminner}
\theoremstyle{oupremark}
\newtheorem{remark}[definition]{Remark}
\newtheorem{example}[definition]{Example}
\newtheorem{assumption}[definition]{Assumption}
\newtheorem{notation}[definition]{Notation}
\newtheorem{terminology}[definition]{Terminology}
\theoremstyle{oupproof}
\newtheorem{proof}{Proof}
\newlist{conditions}{enumerate}{1}
\setlist[conditions]{label={(\Alph*)},ref={\Alph*}}
\crefname{conditionsi}{condition}{conditions}
\numberwithin{equation}{section}
\mathchardef\mh="2D
\newcommand{\define}[4]{\expandafter#1\csname#3#4\endcsname{#2{#4}}}
\newcommand{\GD}{\ensuremath{\Delta}}
\newcommand{\GO}{\ensuremath{\Omega}}
\newcommand{\GS}{\ensuremath{\Sigma}}
\newcommand{\Ga}{\ensuremath{\alpha}}
\newcommand{\Gf}{\ensuremath{\varphi}}
\newcommand{\Gd}{\ensuremath{\delta}}
\newcommand{\Ge}{\ensuremath{\varepsilon}}
\newcommand{\Gh}{\ensuremath{\eta}}
\newcommand{\Gi}{\ensuremath{\iota}}
\newcommand{\Gk}{\ensuremath{\kappa}}
\newcommand{\Gp}{\ensuremath{\pi}}
\newcommand{\Gps}{\ensuremath{\psi}}
\newcommand{\Gs}{\ensuremath{\sigma}}
\newcommand{\Gth}{\ensuremath{\theta}}
\newcommand{\Gw}{\ensuremath{\omega}}
\NewDocumentCommand\YoSymScaled{m}{
  \raisebox{#1em * \real{-.02}}{%
    \includegraphics[quiet=true,draft=false,height=#1em * \real{.67},keepaspectratio]{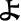}%
    \hspace{0.1em}%
  }
}
\NewDocumentCommand{\yosym}{}{% Jon-Favonia
  \mathord{%
    \mathchoice{\YoSymScaled{1}}{\YoSymScaled{1}}{\YoSymScaled{\defaultscriptratio}}{\YoSymScaled{\defaultscriptscriptratio}}
  }%
}
\newcommand{\defeq}{\coloneqq}
\newcommand{\co}{\colon}
\DeclareMathOperator{\Ob}{\mathrm{Ob}}
\newcommand{\Hom}[3]{{#1}(#2,#3)}
\NewDocumentCommand{\id}{o}{\mathrm{id}\IfValueT{#1}{_{#1}}}
\NewDocumentCommand{\Id}{o}{\mathrm{Id}\IfValueT{#1}{_{#1}}}
\newcommand{\op}[1]{#1^{\mathrm{op}}}
\newcommand{\pair}[2]{\langle #1, #2 \rangle}
\newcommand{\projl}{\Gp_0}
\newcommand{\projr}{\Gp_1}
\newcommand{\copair}[2]{[ #1, #2 ]}
\newcommand{\inl}{\Gi_0}
\newcommand{\inr}{\Gi_1}
\newcommand{\Func}[2]{[#1,#2]}
\newcommand{\FPP}[2]{\Func{#1}{#2}_{\mathrm{fp}}}
\newcommand{\leib}[1]{\mathbin{\widehat{#1}}}
\newcommand{\lbtimes}{\leib{\times}}
\newcommand{\lbto}{\leib{\to}}
\newcommand{\lbotimes}{\leib{\otimes}}
\newcommand{\lboslash}{\leib{\oslash}}
\newcommand{\app}{\mathbin{@}}
\NewDocumentCommand{\Set}{o}{\mathbf{Set}\IfValueT{#1}{_{#1}}}
\newcommand{\FinSet}{\Set[\lfin]}
\newcommand{\Pos}{\mathbf{Pos}}
\newcommand{\Cat}{\mathbf{Cat}}
\newcommand{\catInit}{\mathbf{0}}
\newcommand{\catTerm}{\mathbf{1}}
\newcommand{\lfin}{\mathrm{fin}}
\newcommand{\linh}{\mathrm{inh}}
\newcommand{\ldec}{\mathrm{dec}}
\newcommand{\lcart}{\mathrm{cart}}
\newcommand{\CartArr}[1]{{#1}^\to_\lcart}
\newcommand{\Aut}[2][]{\mathrm{Aut}_{#1}(#2)}
\newcommand{\Slice}[2]{{#1}/{#2}}
\newcommand{\Coslice}[2]{{#2}/{#1}}
\newcommand{\autquo}[2]{{#1}/{#2}}
\newcommand{\Sieve}[2]{\mathrm{Sv}_{#1}(#2)}
\newcommand{\PSieve}[2]{\mathrm{PrSv}_{#1}(#2)}
\newcommand{\principal}[1]{\langle #1 \rangle}
\DeclareMathOperator*{\colim}{colim}
\DeclareMathOperator*{\im}{Im}
\NewDocumentCommand{\wcolim}{m g g}{\IfValueTF{#2}{{#2} \mathbin{\oast_{#1}} {#3}}{{\oast_{#1}}}}
\NewDocumentCommand{\lbwcolim}{m g g}{\IfValueTF{#2}{{#2} \mathbin{\leib{\oast}_{#1}} {#3}}{{\oast_{#1}}}}
\newcommand{\tens}{\mathbin{\ast}}
\DeclareMathOperator{\coend}{\int}
\NewDocumentCommand{\mono}{s}{\IfBooleanTF{#1}{\leftarrowtail}{\rightarrowtail}}
\NewDocumentCommand{\epi}{s}{\IfBooleanTF{#1}{\twoheadleftarrow}{\twoheadrightarrow}}
\newcommand{\core}[1]{\mathrm{Core}({#1})}
\newcommand{\adjunction}[4]{%
  % #1 : #2 <arrows> #3 : #4
  #1\colon #2%
  \mathrel{\vcenter{%
    \offinterlineskip\m@th
    \ialign{%
      \hfil$##$\hfil\cr
      \longrightarrow\cr
      \noalign{\kern-.3ex}
      \smallbot\cr
      \noalign{\kern.1ex}
      \longleftarrow\cr
    }%
  }}%
  #3 \noloc #4%
}
\newcommand{\noloc}{%
  \nobreak
  \mspace{6mu plus 1mu}
  {:}
  \nonscript\mkern-\thinmuskip
  \mathpunct{}
  \mspace{2mu}
}
\newcommand{\smallbot}{%
  \begingroup\setlength\unitlength{.25em}%
  \begin{picture}(1,1)
  \roundcap
  \polyline(0,0)(1,0)
  \polyline(0.5,0)(0.5,1)
  \end{picture}%
  \endgroup
}
\tikzset{
  % Epis and monos
  epi/.style = {commutative diagrams/two heads},
  mono/.style = {commutative diagrams/tail},
  % Model categories
  cof/.style = {{Triangle[reversed,scale=0.9]}->},
  fib/.style = {-{Triangle[open,scale=0.9]}},
  weq/.style = {"\sim"'{sloped,font=\tiny,#1}},
  tcof/.style = {cof,weq={#1}},
  tcof/.default = {above},
  tfib/.style = {fib,weq={#1}},
  tfib/.default = {above},
  % Reedy categories
  lowering/.style = {"-"'{sloped,font=\tiny,#1}},
  raising/.style = {"+"'{,sloped,font=\tiny,#1}},
}
\NewDocumentCommand{\pushout}{D<>{0} O{6ex}}{%
  \arrow[phantom,start anchor=center,to path={-- ++({#1+135}:#2) \tikztonodes}, "\rotatebox{#1}{$\ulcorner$}"]
}
\NewDocumentCommand{\pullback}{D<>{0} O{6ex}}{%
  \arrow[phantom,start anchor=center,to path={-- ++({#1-45}:#2) \tikztonodes}, "\rotatebox{#1}{$\lrcorner$}"]
}
\NewDocumentCommand{\PSh}{o m}{\mathrm{PSh}\IfValueT{#1}{_{#1}}(#2)}
\NewDocumentCommand{\yo}{g}{\yosym\IfValueT{#1}{#1}}
\newcommand{\subst}[1]{#1^*}
\newcommand{\lan}[1]{{#1}_!}
\newcommand{\ran}[1]{{#1}_*}
\newcommand{\nerve}[1]{N_{#1}}
\DeclareMathOperator{\catEl}{\mathbf{el}}
\newcommand{\Simp}{\GD}
\newcommand{\simp}[1]{\GD^{#1}} % Representable n-simplex
\newcommand{\simpbd}[1]{\partial \Delta^{#1}} % n-boundary
\newcommand{\simphorn}[2]{\Lambda^{#1}_{#2}} % (n,k)-horn
\newcommand{\Cube}[1]{\square_{#1}}
\newcommand{\ival}{\BI}
\newcommand{\Aff}{\Cube{\mathrm{aff}}}
\newcommand{\Cart}{\Cube{{\times}}}
\newcommand{\Or}{\Cube{\lor}}
\newcommand{\Ded}{\Cube{\land\!\lor}}
\newcommand{\SLat}{\mathbf{SLat}}
\newcommand{\BSLat}{\mathbf{01}\SLat}
\newcommand{\LBSLat}{\mathbf{0}\SLat}
\newcommand{\SLatF}{\SLat_\lfin}
\newcommand{\SLatFNE}{\SLat^\linh_\lfin}
\newcommand{\insimp}{\blacktriangle}
\newcommand{\incube}{\mathchoice
  {\scalebox{0.80}{$\blacksquare$}}
  {\scalebox{0.80}{$\blacksquare$}}
  {\scalebox{0.50}{$\blacksquare$}}
  {\scalebox{0.25}{$\blacksquare$}}
}
\newcommand{\Simpaug}{\Simp_{\mathrm{a}}}
\newcommand{\IdCompOraug}{\overline{\square}_{\lor{\mathrm{a}}}}
\newcommand{\insimpaug}{\insimp_{\mathrm{a}}}
\newcommand{\Triang}{\mathrm{T}}
\newcommand{\triang}{{\boxslash}}
\newcommand{\lcof}{\mathrm{cof}}
\newcommand{\lfib}{\mathrm{fib}}
\NewDocumentCommand{\arrowofstyle}{m m m}{
  \NewDocumentCommand#1{o}{
    \mathrel{\hspace{-0.63ex}\tikz[baseline=-0.7ex, shorten <=3pt, shorten >=2pt]
      \draw[#2,line width=0.08ex](0,0) -- (1.6em,0)
      \IfValueT{##1}{node [midway,scale=0.75,yshift=0.5ex,xshift=#3] {$##1$}};
    \hspace{-0.41ex}}
  }
}
\arrowofstyle{\ordinary}{->}{-0.2ex}
\arrowofstyle{\cof}{cof}{0.0ex}
\arrowofstyle{\fib}{fib}{-0.3ex}
\newcommand{\weq}{\ordinary[\sim]}
\newcommand{\tcof}{\cof[\sim]}
\newcommand{\tfib}{\fib[\sim]}
\newcommand{\candweq}[2]{\CW(#1,#2)}
\NewDocumentCommand{\Cyl}{o g}{\ival \otimes\IfValueT{#1}{_{#1}} \IfValueTF{#2}{#2}{(-)}}
\NewDocumentCommand{\Cocyl}{o g}{\ival \oslash\IfValueT{#1}{_{#1}} \IfValueTF{#2}{#2}{(-)}}
\newcommand{\MCyl}[2]{\mathrm{M}_{#1}(#2)}
\newcommand{\KanSimp}{\widehat{\Simp}^{\mathrm{kq}}}
\newcommand{\SattlerDed}{\widehat{\square}_{\land\!\lor}^{\mathrm{ty}}}
\newcommand{\CMSOr}{\widehat{\square}_{\lor}^\mathrm{ty}}
\newcommand{\CMSIdComp}{\widehat{\overline{\square}}_{\lor}^{\raisebox{-4pt}{$\scriptstyle\mathrm{ty}$}}}
\newcommand{\ACCRS}{\widehat{\square}_{\mathrm{\times}}^{\textrm{eq}}}
\newcommand{\Test}[2][]{\widehat{#2}_{#1}^{\mathrm{test}}}
\newcommand{\TestOr}{\Test[\lor]{\square}}
\newcommand{\TestIdComp}{\widehat{\overline{\square}}_{\lor}^{\raisebox{-4pt}{$\scriptstyle\mathrm{test}$}}}
\DeclareMathOperator{\TFibStr}{TFib}
\DeclareMathOperator{\FibStr}{Fib}
\newcommand{\incat}[1]{i_{#1}}
\newcommand{\inftyGpd}{\mathbf{{\infty}\mh Gpd}}
\renewcommand{\deg}[1]{\lvert #1 \rvert}
\NewDocumentCommand{\lowering}{s}{\IfBooleanTF{#1}{\overset{-}\leftarrow}{\overset{-}\to}}
\NewDocumentCommand{\raising}{s}{\IfBooleanTF{#1}{\overset{+}\leftarrow}{\overset{+}\to}}
\newcommand{\degcore}[2]{{#2}[#1]}
\newcommand{\sk}[2]{\mathrm{sk}_{{<}#1}{#2}}
\newcommand{\bdyd}[2]{\partial_{#1}{#2}}
\newcommand{\bdyu}[2]{\partial^{#1}{#2}}
\newcommand{\inbdyd}[2]{\text{\textschwa}_{#1}{#2}}
\newcommand{\inbdyu}[2]{\text{\textschwa}^{#1}{#2}}
\newcommand{\latchob}[2]{L_{#1}{#2}}
\newcommand{\latchmap}[2]{\widehat{\ell}_{#1}{#2}}
\newcommand{\latchcod}[2]{{#2}_{#1}}
\newcommand{\elcore}[1]{{#1}^{\mathrm{ec}}}
\newcommand{\IdComp}[1]{\overline{#1}}
\newcommand{\IdCompOr}{\overline{\square}_\lor}
\newcommand{\IdCompDed}{\overline{\square}_{\land\!\lor}}
\NewDocumentCommand{\Alg}{m}{\mathrm{Alg}(#1)}
\newcommand{\TJoin}{\CatT_\lor}
\newcommand{\join}{\mathbin{\star}}
\newcommand{\tmsubst}[3]{{#1}[{#2}/{#3}]}
\newcommand{\tmface}[2]{{#1} \mapsto {#2}}
\newcommand{\tmcom}[5]{\mathsf{com}_{#1}^{{#2} \to {#3}}\,\left[#4\right]\,{#5}}
\newcommand{\Fence}{\mathfrak{F}}
\newcommand{\Crown}[1]{\mathfrak{C}_{#1}}
\newcommand{\crownproj}[1]{p_{#1}}
\newcommand{\wind}[1]{\mathrm{deg}(#1)}
\begin{document}

\begin{Frontmatter}

\title[Relative Elegance and Cartesian Cubes with One Connection]{Relative elegance and cartesian cubes with one connection\thanks{The first author was supported by the Knut and Alice Wallenberg Foundation (KAW) under grants no.\ 2020.0266 and 2019.0116. The second author was supported by Swedish Research Council grant 2019-03765.}}

\author{Evan Cavallo}
\author{Christian Sattler}

\authormark{E. Cavallo and C. Sattler}

\address{\orgname{Department of Computer Science and Engineering, Chalmers University of Technology and University of Gothenburg}, \orgaddress{\city{Gothenburg}, \country{Sweden}}
\email{evan.cavallo@gu.se}}
\address{\orgname{Department of Computer Science and Engineering, Chalmers University of Technology and University of Gothenburg}, \orgaddress{\city{Gothenburg}, \country{Sweden}}
\email{sattler@chalmers.se}}

\keywords[AMS subject classification]{55U35, 03B38}

\keywords{cubical sets, homotopy type theory, Quillen model structure, elegant Reedy category}

\date{\today}

\abstract{
  We establish a Quillen equivalence between the Kan--Quillen model structure and a model structure, derived
  from a cubical model of homotopy type theory, on the category of cartesian cubical sets with one
  connection. We thereby identify a second model structure which both constructively models homotopy type
  theory and presents $\infty$-groupoids, the first example being the equivariant cartesian model of
  Awodey--Cavallo--Coquand--Riehl--Sattler.
}

\end{Frontmatter}

\setcounter{tocdepth}{1}
\tableofcontents

\section{Introduction}

Homotopy type theory (HoTT)~\cite{hott-book} is said to be a language for reasoning in homotopical
settings. The conjecture (``Awodey's proposal'') goes that HoTT should have an interpretation in any
$(\infty,1)$-category belonging to some class of ``elementary $(\infty,1)$-topoi'', indeed, that models of
HoTT should be in correspondence with such $(\infty,1)$-categories. When one says that HoTT interprets in a
given $(\infty,1)$-category, one typically means more precisely that it admits a 1-categorical
\emph{presentation} interpreting HoTT in a 1-categorical sense. These presentations have historically come in
the form of \emph{Quillen model categories}. As an example, Voevodsky's interpretation of HoTT
\cite{kapulkin21} lands in the Kan--Quillen model structure on simplicial sets, which presents the
$(\infty,1)$-category $\inftyGpd$ of $(\infty,1)$-groupoids. Shulman \cite{shulman19} has now shown that every
Grothendieck $(\infty,1)$-topos can be presented by a model category that interprets HoTT.

The interests of type theorists have thus led to new questions in homotopy theory; one avenue is through
the search for \emph{constructive} interpretations of HoTT. The first constructive model to be discovered, due
to \citeauthor{bezem13} \cite{bezem13,bezem19}, interprets HoTT in a category of \emph{affine cubical sets},
presheaves over a certain \emph{affine cube category} $\Aff$ whose objects are symmetric monoidal products of
an interval object $I$. Subsequent constructions~\cite{cohen15,orton18,licata18,angiuli18,cavallo20,abcfhl}
use different cube categories to obtain better properties. With the exception of the BCH model, all employ
presheaves over a cube category with \emph{cartesian} products, \ie, including degeneracy, diagonal, and
permutation maps among its generators. While natural from a type-theoretic perspective, the presence of
diagonals---and to a lesser degree, permutations---is not typical in the homotopy-theoretic literature on
cubical structure.

Initially, none of these cubical models was shown to be compatible with a Quillen model structure; they were
models of HoTT (or of \emph{cubical type theories}) in the direct sense that they gave an interpretation of
the type-theoretic judgments, though they certainly made use of model-categorical intuitions. The connection
with model category theory is first made precise in~\cite{gambino17,sattler17}, where it is shown that
structure patterned on Cohen \etal's cubical set model~\cite{cohen15}---in particular, a functorial cylinder
with connections---gives rise to a Quillen model structure. These methods were adapted by Cavallo,
M\"{o}rtberg, and Swan~\cite{cavallo20} and Awodey~\cite{awodey23} to presheaves over cartesian cube
categories not necessarily supporting connections, producing model structures compatible with the type
theories and interpretations of Angiuli \etal~\cite{angiuli18,abcfhl}. Model structures in this lineage have
been called \emph{cubical-type model structures}.

It is now natural to ask which $(\infty,1)$-categories these model structures present. In particular, we would
like to know if any present $\inftyGpd$: such a presentation would be a constructive setting for standard
homotopy theory equipped with a constructive interpretation of HoTT, and could serve as a base case for
constructing further constructive models following Shulman~\cite{shulman19}. However, Buchholtz and Sattler
determined in 2018 \cite{coquand18c,sattler18} that almost all concrete cubical-type model structures
considered up to that point present $(\infty,1)$-categories \emph{in}equivalent to $\inftyGpd$. The exception
is the Sattler model structure $\SattlerDed$ on presheaves on the \emph{Dedekind cube category} $\Ded$, the
cube category with cartesian structure and both connections, whose status remains an open problem.

\subsubsection*{Cubes with one connection}

The difficulty in analyzing the Dedekind cube category $\Ded$ is that it is not a \emph{(generalized) Reedy
  category} \cite{berger11}, one in which each object is associated an ordinal degree and any morphism factors
as a degeneracy-like degree-lowering map followed by a face-like degree-raising map. Any presheaf over a Reedy
category can be built up inductively by attaching cells drawn from a set of generators, namely quotients of
representables by automorphism subgroups. In the subclasses of \emph{elegant} or \emph{Eilenberg-Zilber (EZ)}
categories, this cellular decomposition is moreover homotopically well-behaved with respect to any model
structure in which the cofibrations are the monomorphisms: it exhibits any presheaf as the \emph{homotopy}
colimit of basic cells. The problem in $\Ded$ is the combination of connections and diagonals, exemplified the
morphism $(x,y,z) \mapsto (x \lor y, y \lor z, x \land y)$ from the $3$-cube to itself. This map has no (split
epi, mono) factorization, a state of affairs forbidden in an elegant Reedy category.%
\footnote{A simpler map without a (split epi, mono) factorization in $\Ded$ is $(x,y) \mapsto (x, x \lor y)$,
  but this is an idempotent and so admits such a factorization in the idempotent completion $\IdCompDed$
  (characterized in \cite[Theorem 2.1]{sattler19}). The aforementioned 3-cube endomap does not: it does have
  an (epi, mono) factorization in $\IdCompDed$, but the left map does not split. It is the idempotent
  completion that counts when we consider whether elegant Reedy techniques apply.}

Thus, while \citeauthor{sattler19}~\cite{sattler19} and \citeauthor{streicher21}~\cite{streicher21} have
identified an adjoint triple of Quillen adjunctions relating $\SattlerDed$ and $\KanSimp$, it is not known
whether there is a Quillen equivalence. In particular, it is unclear how to prove that a round-trip composite
$\SattlerDed \to \KanSimp \to \SattlerDed$ is weakly equivalent to the identity in the absence of an elegant
Reedy structure on $\Ded$.

In this article we consider an overlooked cube category: the category $\Or$ of cubes with cartesian structure
and a \emph{single} connection. (We arbitrarily choose the ``max'' or ``negative'' connection, but this choice
plays no role.)  Presheaves on this category satisfy conditions sufficient to obtain a cubical-type model
structure $\CMSOr$ using existing techniques~\cite{cavallo20,awodey23}. Moreover, the arguments used in
\cite{sattler19,streicher21} adapt readily from $\Ded$ to $\Or$, providing a Quillen adjoint triple relating
$\CMSOr$ with $\KanSimp$.

Like the Dedekind cube category, $\Or$ is not Reedy. In this case, the archetypical problematic map is
$(x,y,z) \mapsto (x \lor y, y \lor z, z \lor x)$.\footnote{See \cref{sec:negative-disjunctive} for a proof
  that neither $\Or$ nor its idempotent completion is Reedy.} However, $\Or$ does \emph{embed} nicely in a
Reedy category, namely the category of finite inhabited join-semilattices: we have a functor
$i \co \Or \to \SLatFNE$ sending the $n$-cube to the $n$-fold product of the poset $\{0 < 1\}$.
While $\SLatFNE$ is not itself elegant, it satisfies a relativized form of elegance with respect to the
subcategory $\Or$. Whereas elegance would require the Yoneda embedding $\yo \co \SLatFNE \to \PSh{\SLatFNE}$
to preserve pushouts of spans of degeneracy maps, here it is the nerve
$\nerve{i} \defeq \subst{i}\yo \co \SLatFNE \to \PSh{\Or}$ that preserves such pushouts. We say that
$\SLatFNE$ is \emph{elegant relative to $i$}, or that $i$ is an \emph{elegant embedding}.

We find that the useful properties of elegant Reedy categories can be extended, in an appropriately
relativized form, to categories $\CatC$ with an elegant embedding $i \co \CatC \to \CatR$ in a Reedy
category. In particular, we show that any presheaf over $\CatC$ admits a homotopically well-behaved cellular
decomposition whose cells are automorphism quotients of objects in the image of $\nerve{i}$. With these tools
in hand, we are able to establish that the Quillen adjunctions relating $\CMSOr$ and $\KanSimp$ are Quillen
equivalences. We thus identify a cubical-type model structure presenting $\inftyGpd$, compatible with a
constructive interpretation of either HoTT or of cubical type theory with one connection.

\subsubsection*{Outline}

We begin in \cref{sec:background} with a brief review of model structures, Quillen equivalences, Reedy
categories, and the Kan--Quillen model structure on simplicial sets. In \cref{sec:model-structure-recognition},
we present an improvement on the first part of \cite{sattler17}: a series of increasingly specialized criteria
under which candidate (cofibration, trivial fibration) and (trivial cofibration, fibration) factorization
systems induce a model structure, culminating in a theorem tailored to models of type theory with universes.

In \cref{sec:disjunctive-cubical-sets}, we introduce the cube category $\Or$ and its basic properties,
construct the cubical-type model structure on $\PSh{\Or}$ using the results of the previous section, and define
a \emph{triangulation} adjunction $\adjunction{\Triang}{\PSh{\Or}}{\PSh{\Simp}}{\nerve{\triang}}$. We moreover
characterize the cube category's idempotent completion $\IdCompOr$. The categories of presheaves on $\Or$ and
$\IdCompOr$ are equivalent, but by working with the latter we can more easily compare with the simplex
category, following \cite{sattler19,streicher21}. In particular we have an embedding
$\insimp \co \Simp \to \IdCompOr$, thus an adjoint triple
$\lan{\insimp} \dashv \subst{\insimp\!} \dashv \ran{\insimp}$ relating $\PSh{\Simp}$ and $\PSh{\IdCompOr}$;
the triangulation adjunction corresponds to $\subst{\insimp\!} \dashv \ran{\insimp}$ along the equivalence
$\PSh{\Or} \simeq \PSh{\IdCompOr}$. In \cref{sec:simp-cub-quillen-adjunction} we show that both
$\lan{\insimp} \dashv \subst{\insimp\!}$ and $\subst{\insimp\!} \dashv \ran{\insimp}$ are Quillen adjunctions.

We focus on the adjunction $\lan{\insimp} \dashv \subst{\insimp\!}$. It is easy to see that its derived unit
is valued in weak equivalences, as $\insimp$ is fully faithful. To show its derived counit is valued in weak
equivalences, we spend \cref{sec:relatively-elegant} developing a theory of relative elegance. In
\cref{sec:algebra-elegant-core}, we show that the functor $i \co \Or \to \SLatFNE$ is relatively
elegant by way of a general analysis of Reedy categories of finite algebras.  In \cref{sec:equivalences} we
use this result to complete the Quillen equivalence between $\CMSOr$ and $\KanSimp$. We show first that
$\lan{\insimp} \dashv \subst{\insimp\!}$ is a Quillen equivalence, then deduce that
$\subst{\insimp\!} \dashv \ran{\insimp}$ is one as well, concluding with our main theorem as an immediate
corollary:

\begin{manualtheorem}{{\ref{triangulation-quillen-equivalence}}}
  The triangulation-nerve adjunction $\adjunction{\Triang}{\CMSOr}{\KanSimp}{\nerve{\triang}}$ is a
  Quillen equivalence.
\end{manualtheorem}

As a final corollary, we show in \cref{sec:test} that $\CMSOr$ coincides with Cisinski's \emph{test model
  structure} on $\PSh{\Or}$.

In \cref{sec:negative}, we give proofs of some negative results concerning Reedy structures on cartesian cube
categories with connections. First, we check that neither $\Or$ nor its idempotent completion supports a Reedy
structure, justifying our recourse to relative elegance. Second, we prove that $\Ded$ does not embed elegantly
in \emph{any} Reedy category, showing that our techniques cannot be applied in the two-connection case.

\subsection{Related work}

\subsubsection{Cartesian cubes}

This work's closest relative is the \emph{equivariant model structure} $\ACCRS$ on presheaves over the
cartesian cube category $\Cart$ constructed by Awodey, Cavallo, Coquand, Riehl, and Sattler (ACCRS)
\cite{accrs}, which also classically presents $\inftyGpd$.
The ACCRS construction is a modification of earlier models in presheaves on
$\Cart$~\cite{abcfhl,cavallo20,awodey23}. Briefly, where the definition of fibration involves lifting against maps
$1 \to \ival$ from the point to the interval, the definition of equivariant fibration involves lifting against
maps $1 \to \ival^n$ for all $n$ and requires lifts stable under permutations of $\ival^n$. Like our own model
structure, $\ACCRS$ is compatible with a constructive interpretation of HoTT.

In $\CMSOr$, equivariance does not appear explicitly but is still implicitly present: when the interval
supports a connection operator, ordinary and equivariant lifting become interderivable (see
\cref{equivariant-lifting-from-connection}). Our model structure may thus be seen as an instance of the
equivariant model structure construction applied in $\PSh{\Or}$, one which happens to admit a simpler
description.

\subsubsection{Test category theory}

Buchholtz and Morehouse~\cite{buchholtz17} catalogue a number of categories of cubical sets, specifically
investigating cube categories used in models of HoTT such as $\Cart$, $\Ded$, and the De Morgan cube
category. They observe that these categories are all \emph{test categories}, thus that each supports a
\emph{test model structure} equivalent to $\KanSimp$ \cite{cisinski06}. To our knowledge, however, none of
these model structures is known to be compatible with a model of HoTT with the exception of the test model
structure on $\Cart$, which coincides with $\ACCRS$ \cite[Theorem 6.3.6]{accrs}. As a corollary of
our Quillen equivalence, we check in \cref{sec:test} that $\CMSOr$ coincides with the test model structure on
$\Or$. Cisinski~\cite{cisinski14} does show that the test model structure on any elegant strict (that is,
non-generalized) Reedy category is compatible with a model of HoTT, but the strictness condition precludes
application to any cube category with permutations.

\subsubsection{Cubes with one connection}

To our knowledge, the category of cubes with cartesian structure and $\lor$-connections (or
$\land$-connections) has not been studied before, except in passing by
\citeauthor{buchholtz17}~\cite{buchholtz17}, though cartesian cube categories with both $\lor$- and
$\land$-connections have been used in interpretations of HoTT beginning with
\citeauthor{cohen15}~\cite{cohen15}.

On the other hand, subcategories without diagonals have seen use in classical homotopy theory. Indeed,
\citeauthor{brown81} use the cube category generated by faces, degeneracies, and $\lor$-connections in their
seminal article introducing connections for cubical sets~\cite{brown81}. Isaacson \cite{isaacson11} studies
the cube category with faces, degeneracies, symmetries, and $\land$-connections. Unlike $\Or$, these are
elegant Reedy categories \cites[Remarque 5.6]{maltsiniotis09}[Proposition 3.4]{isaacson11}: connections are
only problematic in combination with diagonals. They furthermore have useful properties compared to the
minimal cube category (generated by faces and degeneracies). For one, they are \emph{strict} test
categories~\cites{maltsiniotis09}[Theorem 3]{buchholtz17}, meaning that the localization functor from the test
model structures on these cubical sets to their homotopy categories preserves products.

It should be noted, however, that this particular distinction disappears in the cartesian cases: any cube
category with cartesian structure is a strict test category, regardless of the presence of connections
\cite[Corollary 2]{buchholtz17}. For us, the convenient properties of $\Or$ relative to $\Cart$ are
\begin{enumerate*}
\item the existence of an embedding from the simplex category into the idempotent completion of $\Or$, which
  facilitates the comparison between their presheaf categories, and
\item the existence of a contracting homotopy of each $n$-cube invariant under permutations, namely
  $(x_1,\ldots,x_n,t) \mapsto (x_1 \lor t, \ldots, x_n \lor t) \co [1]^n \times [1] \to [1]^n$.
\end{enumerate*}

\subsubsection{Constructive simplicial models}
\label{sec:constructive-simplicial}

Another line of work aims to reformulate the Kan--Quillen model structure and Voevodsky's simplicial model of
HoTT so that these can be obtained constructively. Bezem, Coquand, and Parmann
\cite{bezem15-kripke,bezem15-constructivity,parmann18} show that fibrations as usually defined\footnote{
  \cite{bezem15-kripke,parmann18} prove obstructions for a definition of fibration where lifting is treated as
  an operation, while \cite{bezem15-constructivity} considers fibrations requiring mere existence of a lift.
} %
in $\KanSimp$ do not provide a model of HoTT constructively; in particular, they are not closed under
pushforward along fibrations, which is necessary to interpret $\Pi$-types. These obstructions are avoided in
the cubical models by working with \emph{uniform fibrations}, which classically coincide with ordinary
fibrations but provide necessary extra structure in the constructive case. However, there are obstructions to
constructing a universe classifying uniform fibrations in simplicial sets \cites[Appendix
D]{van-den-berg22}[\S8.4.1]{swan22}.

\citeauthor{henry19} \cite{henry19} discovered that the Kan--Quillen model structure can be constructivized by
instead modifying the class of cofibrations, in particular taking a simplicial set to be cofibrant only when
degeneracy of its cells is decidable. Alternative constructions of the same model structure were later
presented by \citeauthor{gambino22b} \cite{gambino22b}. Gambino and Henry \cite{gambino22a} exhibit a
constructive form of Voevodsky's simplical model of HoTT using these ideas. The problem is not entirely
settled, however: the \emph{left adjoint splitting} coherence construction \cite{lumsdaine15}, applied to the
classical simplicial model to obtain a strict model of type theory, does not apply constructively in this case
\cite[Remark 8.5]{gambino22a}. There has since been progress on coherence theorems that do apply here
\cite{bocquet21,gambino21}, but the question is not to our knowledge fully resolved. Separately, van den Berg
and Faber \cite{van-den-berg22} have identified and developed a theory of \emph{effective fibrations} of
simplicial sets, which are both closed under pushforward and support a classifying universe, but have not yet
addressed the interpretation of univalence.

\subsubsection{Constructivity}

Though our interest in cubical-type model structures is motivated by constructive concerns, we work entirely
and incautiously within a classical metatheory in this article, our goal being an equivalence with a
classically defined model structure.  Given that $\CMSOr$ is constructively definable, however, it is natural
to wonder whether it is \emph{constructively} equivalent with the ACCRS or constructive simplicial model
structures. We leave this question for the future, referring to~\citeauthor{shulman23}~\cite{shulman23} for
further discussion of the constructive homotopy theory of spaces.

We note that the triangulation functor $\Triang \co \PSh{\Or} \to \PSh{\Simp}$ (\cref{def:triangulation}) is
definitely \emph{not} a left Quillen adjoint from $\CMSOr$ to~\citeauthor{henry19}'s simplicial model
structure constructively, as it does not preserve cofibrations unless the excluded middle holds.  The
(triangulation, nerve) adjunction exhibits $\PSh{\Simp}$ as a reflective subcategory of $\PSh{\Or}$, so every
simplicial set is the triangulation of some cubical set. But while every cubical set is cofibrant in $\CMSOr$,
not every simplicial set is cofibrant in~\citeauthor{henry19}'s model structure. For example, given a
subsingleton set $P$, the pushout of the span $\Delta^1 \leftarrow \Delta^1 \times P \to P$ is cofibrant if
and only if $P$ is decidable.

\subsubsection{Reedy, non-Reedy, and Reedy-like categories}

Campion \cite{campion23} studies the existence and non-existence of elegant Reedy structures on various cube
categories, among them $\Or$ (under the name $\square_{d,c^{\lor},s}$). A few observations are made
independently in that article and our own; in particular, \cite[Proposition 8.3]{campion23} is our
\cref{or-idempotent-completion}, while \cite[Theorem 8.12(2)]{campion23} follows from our
\cref{idcompor-not-reedy}.

Shulman's \emph{almost c-Reedy categories}~\cite[Definition 8.8]{shulman15} generalize beyond generalized
Reedy categories. These allow for non-isomorphisms that do not factor through a lower-degree object, so one
may wonder if the aforementioned pathological map $u \co [1]^3 \to [1]^3$ in $\Or$ (and $\IdCompOr$) defined
by $(x,y,z) \mapsto (x \lor y, y \lor z, z \lor x)$ can be accommodated in this way. However, the class of
degree-preserving maps not admitting a lower-degree factorization must be closed under
composition~\cite[Theorem 8.13(ii)]{shulman15}. While $u$ factors through no lower-dimensional object,
$uu$ factors through the 1-cube. As such, this generalization is unlikely to be helpful here.

\subsection{Acknowledgments}

We thank Steve Awodey, Thierry Coquand, and Emily Riehl, our collaboration with whom inspired this spin-off
project, for their suggestions and feedback. We also thank Emily Riehl for alerting us to errors in the first
preprint version of this article. The idea of embedding non-Reedy cube categories in larger Reedy categories
came to us via Matthew Weaver and Daniel Licata, who experimented with (but did not ultimately use) this
strategy in work on cubical models of directed type theory \cite{weaver20}. The first author thanks Brandon
Doherty, Anders M\"{o}rtberg, Axel Ljungström, and Matthew Weaver for helpful conversations. We credit an
observation of \citeauthor*{imrich14}\ \cite[Lemma 2]{imrich14} for inspiring the argument in
\cref{sec:dedekind-not-relative-elegant}.

\section{Background}
\label{sec:background}

\subsection{Preliminaries}

We begin by fixing a few notational conventions.

\begin{notation}
  We write $\Func{\CatE}{\CatF}$ for the category of functors from $\CatE$ and $\CatF$. We write
  $\PSh{\CatC} \defeq \Func{\op{\CatC}}{\Set}$ for the category of presheaves on a category $\CatC$ and
  $\yo \co \CatC \to \PSh{\CatC}$ for the Yoneda embedding.
\end{notation}

\begin{notation}
  When regarding a functor as a diagram, we use superscripts for covariant indexing and subscripts for
  contravariant indexing. Thus if $F \co \CatD \to \CatE$ then we have $F^d \in \CatE$ for $d \in \CatD$,
  while if $F \co \op{\CatC} \to \CatE$ then we have $F_c \in \CatE$ for $c \in \CatC$. We sometimes partially
  apply a multi-argument functor: given $F \co \op{\CatC} \times \CatD \to \CatE$ and $c \in \CatC$,
  $d \in \CatD$, we have $F_c \in \CatD \to \CatE$, $F^d \in \op{\CatC} \to \CatE$, and $F_c^d \in \CatE$.
\end{notation}

By a \emph{bifunctor} we mean a functor in two arguments. We make repeated use of the \emph{Leibniz
  construction} \cite[Definition 4.4]{riehl14b}, which transforms a bifunctor into an bifunctor on arrow
categories.

\begin{definition}
  Given a bifunctor ${\odot} \co \CatC \times \CatD \to \CatE$ into a category $\CatE$ with pushouts, the
  \emph{Leibniz construction} defines a bifunctor
  ${\leib{\odot}} \co \CatC^\to \times \CatD^\to \to \CatE^\to$, with $f \leib{\odot} g$ defined for
  $f \co A \to B$ and $g \co X \to Y$ as the following induced map:
  \[
    \begin{tikzcd}[column sep=large]
      A \odot X \ar{d}[left]{A \odot g} \ar{r}{f \odot X} & B \odot X \ar{d} \ar[bend left]{ddr}{B \odot g}\\
      A \odot Y \ar{r} \ar[bend right]{drr}[below left]{f \odot Y} & \bullet \pushout \ar[dashed]{dr} \\[-1em]
      & &[-4em] B \odot Y \rlap{.}
    \end{tikzcd}
  \]
\end{definition}

\begin{example}
  If $\CatE$ is a category with binary products and pushouts, applying the Leibniz construction to the binary
  product functor ${\times} \co \CatE \times \CatE \to \CatE$ produces the \emph{pushout product} bifunctor
  $\lbtimes \co \CatE^\to \times \CatE^\to \to \CatE^\to$.
\end{example}

\subsection{Model structures and Quillen equivalences}

In the abstract, the force of our result is that a certain model category presents the $(\infty,1)$-category
of $\infty$-groupoids. Concretely, we work entirely in model-categorical terms, exhibiting a Quillen
equivalence between this model category and another model category---simplicial sets---already known to
present $\inftyGpd$. We briefly fix the relevant basic definitions here but assume prior familiarity,
especially with factorization systems; standard references include~\cite{hovey99,dwyer04}.

\begin{definition}
  A \emph{model structure} on a category $\CatM$ is a triple $(\CC, \CW, \CF)$ of classes of morphisms in
  $\CatM$, called the \emph{cofibrations}, \emph{weak equivalences}, and \emph{fibrations} respectively, such
  that $(\CC, \CF \cap \CW)$ and $(\CC \cap \CW, \CF)$ are weak factorization systems and $\CW$ satisfies the
  2-out-of-3 property. A \emph{model category} is a finitely complete and cocomplete category equipped with a model
  structure. We use the arrow $\cof$ for cofibrations, $\weq$ for weak equivalences, and $\fib$ for
  fibrations. Maps in $\CC \cap \CW$ and $\CF \cap \CW$ are called \emph{trivial} cofibrations and fibrations
  respectively.

  We say that a model structure on $\CatM$ \emph{has monos as cofibrations} when its class of cofibrations is
  exactly the class of monomorphisms in $\CatM$.\footnote{Such a model structure which is also cofibrantly
    generated (see below) is called a \emph{Cisinski model structure}, these being the subject of
    \cite{cisinski06}.}
\end{definition}

\begin{definition}
  We say an object is \emph{cofibrant} when $0 \to A$ is a cofibration, dually \emph{fibrant} if $A \to 1$ is
  a fibration. The weak factorization system $(\CC, \CF \cap \CW)$ implies that for every object $A$, we have a
  diagram $0 \cof A^\lcof \tfib A$ obtained by factorizing $0 \to A$; we say such an $A^\lcof$ is a
  \emph{cofibrant replacement} of $A$. Likewise, an object $A^\lfib$ sitting in a diagram
  $A \tcof A^\lfib \fib 1$ is a \emph{fibrant replacement} of $A$.
\end{definition}

\begin{definition}
  We say an object $X$ in a model category is \emph{weakly contractible} when the map $X \to 1$ is a weak equivalence.
\end{definition}

Note that given any two of the classes $(\CC, \CW, \CF)$, we can reconstruct the third: $\CC$ is the class of
maps with left lifting against $\CF \cap \CW$, $\CF$ is the class of maps with right lifting against
$\CC \cap \CW$, and $\CW$ is the class of maps that can be factored as a map with left lifting against $\CF$
followed by a map with right lifting against $\CC$. We will thus frequently introduce a model category by
giving a description of two of its classes.

The two factorization systems are commonly generated by \emph{sets} of left maps:

\begin{definition}
  We say a weak factorization system $(\CL,\CR)$ on a category $\CatE$ is \emph{cofibrantly generated} by some
  set $S \subseteq \CL$ when $\CR$ is the class of maps with the right lifting property against all maps in
  $S$. A model structure is cofibrantly generated when its component weak factorization systems are.
\end{definition}

Now we come to relationships between model categories.

\begin{definition}
  A \emph{Quillen adjunction} between model categories $\CatM$ and $\CatN$ is a pair of adjoint functors
  $\adjunction{F}{\CatM}{\CatN}{G}$ such that $F$ preserves cofibrations and $G$ preserves fibrations.
\end{definition}

Note that $F$ preserves cofibrations if and only if $G$ preserves trivial fibrations, while $G$ preserves
fibrations if and only if $F$ preserves trivial cofibrations.

\begin{definition}
  A Quillen adjunction $\adjunction{F}{\CatM}{\CatN}{G}$ is a \emph{Quillen equivalence} when
  \begin{itemize}
  \item for every cofibrant $X \in \CatM$, the \emph{derived unit}
    $X \overset{\Gh_X}\to GFX \overset{Gm}\to G((FX)^{\lfib})$ is a weak equivalence for some
    fibrant replacement $m \co FX \tcof (FX)^{\lfib}$;
  \item for every fibrant $Y \in \CatN$, the \emph{derived counit}
    $F((GY)^{\lcof}) \overset{Fp} \to FGY \overset{\Ge_Y}\to Y$ is a weak equivalence for some
    cofibrant replacement $p \co (GY)^{\lcof} \tfib GY$.
  \end{itemize}
  Two model structures are \emph{Quillen equivalent} when there is a zigzag of Quillen equivalences connecting
  them.
\end{definition}

\subsection{Reedy categories and elegance}

The linchpin of our approach is Reedy category theory, the theory of diagrams over categories whose morphisms
factor into degeneracy-like and face-like components. As our base category of interest contains non-trivial
isomorphisms, we work more specifically with the \emph{generalized Reedy categories} introduced by Berger and
Moerdijk~\cite{berger11}.

\begin{definition}
  \label{def:reedy}
  A \emph{(generalized) Reedy structure} on a category $\CatR$ consists of an orthogonal factorization system
  $(\CatR^-,\CatR^+)$ on $\CatR$ together with a \emph{degree map} $\deg{-} \co \Ob{\CatR} \to \BN$,
  compatible in the following sense: given $f \co a \to b$ in $\CatR^-$ (\resp $\CatR^+$), we have
  $\deg{a} \ge \deg{b}$ (\resp $\deg{a} \le \deg{b}$), with $\deg{a} = \deg{b}$ only if $f$ is invertible.

  We refer to maps in $\CatR^-$ as \emph{lowering maps} and maps in $\CatR^+$ as \emph{raising maps}, and we
  use the annotated arrows $\lowering$ and $\raising$ to denote lowering and raising maps respectively. The
  \emph{degree} of a map is the degree of the intermediate object in its Reedy factorization. Note that this
  definition is self-dual: if $\CatR$ is a Reedy category, then $\op{\CatR}$ is a Reedy category with the same
  degree function but with lowering and raising maps swapped.
\end{definition}

\begin{terminology}
  We henceforth drop the qualifier \emph{generalized}, as we are almost always working with generalized Reedy
  categories. Instead, we say a Reedy category is \emph{strict} if any parallel isomorphisms are equal and it
  is skeletal, \ie, it is a Reedy category in the original sense.
\end{terminology}

The prototypical strict Reedy category is the simplex category $\Simp$: the degree of an $n$-simplex is $n$,
while the lowering and raising maps are the degeneracy and face maps respectively \cite[\S II.3.2]{gabriel67}.

A Reedy structure on a category $\CatR$ is essentially a tool for working with $\CatR$-shaped diagrams. For
example, a weak factorization system on any category $\CatE$ induces \emph{injective} and \emph{projective
  Reedy weak factorization systems} on the category $\Func{\CatR}{\CatE}$ of $\CatR$-shaped diagrams in
$\CatE$; likewise for model structures. Importantly for us, any diagram of shape $\CatR$ can be regarded as
built iteratively from ``partial'' diagrams specifying the elements at indices up to a given degree. We are
specifically interested in presheaves, \ie, $\op{\CatR}$-shaped diagrams in $\Set$. We refer to
\cites[\S22]{dwyer04}{berger11}{riehl14b}{shulman15} for overviews of Reedy categories and their applications.

\citeauthor{berger11}'s definition of generalized Reedy category \cite[Definition 1.1]{berger11} includes one
additional axiom. Following \citeauthor{riehl17b}~\cite{riehl17b}, we treat this as a property to be assumed
only where necessary:

\begin{definition}
  In a Reedy category $\CatR$, we say \emph{isos act freely on lowering maps} when for any
  $e \co r \lowering s$ and isomorphism $\Gth \co s \cong s$, if $\Gth e = e$ then $\Gth = \id$.
\end{definition}

Note that any Reedy category in which all lowering maps are epic satisfies this property. The main
results of this paper are restricted to \emph{pre-elegant} Reedy categories (\cref{def:pre-elegant}) for which
this is always the case (\cref{pre-elegant-lowering-epi}); nevertheless, we try to record where only the
weaker assumption is needed.

The following cancellation property will come in handy.

\begin{lemma} \label{reedy-cancellation}
  Let $f \co r \to s$, $g \co s \to t$ be maps in a Reedy category. If $gf$ is a lowering map, then so
  is $g$. Dually, if $gf$ is a raising map, then so is $f$.
\end{lemma}
\begin{proof}
  We prove the first statement; the second follows by duality.
  Suppose $gf$ is a lowering map.
  We take Reedy factorizations $f = me$, $g = m'e'$, and then $e'm = m''e''$:
  \[
    \begin{tikzcd}[row sep=small]
      r \ar[dashed,lowering=above]{dr}[below left]{e} \ar{rr}{f} && s \ar[dashed,lowering=above]{dr}[below left]{e'} \ar{rr}{g} && r \rlap{.} \\
      & t \ar[dashed,lowering=above]{dr}[below left]{e''} \ar[dashed,raising=above]{ur}[below right]{m} && t' \ar[dashed,raising=above]{ur}[below right]{m'} \\
      & & t'' \ar[dashed,raising=above]{ur}[below right]{m''}
    \end{tikzcd}
  \]
  This gives us a Reedy factorization $gf = (m'm'')(e''e)$.
  By uniqueness of factorizations, $m'm''$ must be an
  isomorphism; this implies $\deg{t''} = \deg{t'} = \deg{r}$, so $m'$ and $m''$ are also isomorphisms.  Thus
  $g \cong e'$ is a lowering map.
\end{proof}

\begin{corollary}
  \label{split-epi-is-lowering}
  Any split epimorphism in a Reedy category is a lowering map; dually, any split monomorphism is a raising
  map.
\qed
\end{corollary}

When studying $\Set$-valued presheaves over a Reedy category, it is useful to consider the narrower class of
\emph{elegant Reedy categories} \cite{berger11,bergner13}.

\begin{definition}
  \label{def:elegant-reedy}
  A Reedy structure on a category $\CatR$ is \emph{elegant} when
  \begin{enumerate}[label=(\alph*)]
  \item any span $s \overset{e}\leftarrow r \overset{e'}\to s'$ consisting of lowering maps $e,e$' has a
    pushout;
  \item the Yoneda embedding $\yo \co \CatR \to \PSh{\CatR}$ preserves these pushouts.
  \end{enumerate}
  We refer to spans consisting of lowering maps as \emph{lowering spans}, likewise pushouts of such spans as
  \emph{lowering pushouts}. Note that all the maps in a lowering pushout square are lowering maps, as the left
  class of any factorization system is closed under cobase change.
\end{definition}

Intuitively, an elegant Reedy category is one where any pair of ``degeneracies'' $s \lowering* r \lowering s'$
has a universal ``combination'' $r \lowering s \sqcup_r s'$, namely the diagonal of their pushout. The
condition on the Yoneda embedding asks that any $r$-cell in a presheaf is degenerate along (that is, factors
through) both $r \lowering s$ and $r \lowering s'$ if and only if it is degenerate along their
combination. Again, the simplex category is the prototypical elegant Reedy category
\cite[\S{II.3.2}]{gabriel67}.

\begin{remark}
  \label{eilenberg-zilber}
  This definition is one of a few equivalent formulations introduced by \citeauthor{bergner13}
  \cite[Definition 3.5, Proposition 3.8]{bergner13} for strict Reedy categories. For generalized Reedy
  categories, \citeauthor{berger11} \cite[Definition 6.7]{berger11} define \emph{Eilenberg-Zilber} (or
  \emph{EZ}) \emph{categories}, which additionally require that $\CatR^+$ and $\CatR^-$ are exactly the
  monomorphisms and split epimorphisms respectively. We make do without this restriction. It is always the
  case that the lowering maps in an elegant Reedy category are the split epis (see
  \cref{elegant-lowering-split-epi} below), but the raising maps need not be monic. For example \cite[Example
  4.3]{campion23}, any direct category (that is, any Reedy category with $\CatR^+ = \CatR^\to$) is elegant,
  but a direct category can contain non-monic arrows.
\end{remark}

A presheaf $X \in \PSh{\CatR}$ over any Reedy category can be written as the sequential colimit of a sequence
of \emph{$n$-skeleta} containing non-degenerate cells of $X$ only up to degree $n$, with the maps between
successive skeleta obtained as cobase changes of certain basic \emph{cell maps}. When $\CatR$ is elegant,
these cell maps are moreover monic. This property gives rise to a kind of induction principle: any
property closed under certain colimits can be verified for all presheaves on an elegant Reedy category by
checking that it holds on basic cells. This principle is conveniently encapsulated by the following
definition.

\begin{definition}[{{\thmcite[Definition 1.3.9]{cisinski19}}}]
  \label{def:saturated-by-monos}
  Let a category $\CatE$ be given. We say a replete class of objects $\CP \subseteq \CatE$ is \emph{saturated
    by monomorphisms} when
  \begin{enumerate}[label=(\alph*)]
  \item $\CP$ is closed under small coproducts;
  \item For every pushout square
    \[
      \begin{tikzcd}
        X \ar[tail]{d}[left]{m} \ar{r} & X' \ar[tail]{d}{m'} \\
        Y \ar{r} & Y' \pushout
      \end{tikzcd}
    \]
    such that $X,X',Y \in \CP$, we have $Y' \in \CP$;
  \item For every diagram $X \co \Gw \to \CatE$ such that each object $X^i$ is in $\CP$ and each morphism
    $X^i \to X^{i+1}$ is monic, we have $\colim_{i < \Gw} X^i \in \CP$.
  \end{enumerate}
\end{definition}

We note that when $\CatE$ is a model category with monos as cofibrations, these are all diagrams
whose colimits agree with their \emph{homotopy colimits}: we can compute their colimits in the
$(\infty,1)$-category presented by $\CatE$ by simply computing their 1-categorical colimits in $\CatE$, which
is hardly the case in general. This fact is another application of Reedy category theory; see for example
Dugger \cite[\S14]{dugger08}. As a result, these colimits have homotopical properties analogous to
1-categorical properties of colimits. For example, recall that given a natural transformation
$\Ga \co F \to G$ between left adjoint functors $F,G \co \CatE \to \CatF$, the class of $X \in \CatE$ such
that $\Ga_X$ is an isomorphism is closed under colimits. If $F,G$ are left \emph{Quillen} adjoints and
$\CatE,\CatF$ have monomorphisms as cofibrations, then the class of $X$ such that $\Ga_X$ is a \emph{weak
  equivalence} is saturated by monomorphisms. This particular fact will be key in
\cref{sec:equivalence-kan-quillen}.

For presheaves over an elegant Reedy category, the basic cells are the quotients of representables by
automorphism subgroups.

\begin{definition}
  Given an object $X$ of a category $\CatE$ and a subgroup $H \le \Aut[\CatE]{X}$, their \emph{quotient} is
  the colimit $\autquo{X}{H} \defeq \colim\parens{H \to \Aut[\CatE]{X} \to \CatE}$.
\end{definition}

\begin{proposition}
  \label{saturated-elegant}
  Let $\CatR$ be an elegant Reedy category. Let $\CP \subseteq \PSh{\CatR}$ be a class of objects such that
  \begin{itemize}
  \item for any $r \in \CatR$ and $H \le \Aut[\CatR]{r}$, we have $\autquo{\yo{r}}{H} \in \CP$;
  \item $\CP$ is saturated by monomorphisms.
  \end{itemize}
  Then $\CP$ contains all objects of $\PSh{\CatR}$.
\end{proposition}
\begin{proof}
  \cite[Corollary 1.3.10]{cisinski19} gives a proof for strict elegant Reedy categories; the proof for the
  generalized case is similar (and a special case of our \cref{saturated-elegant-relative}).
\end{proof}

As described above, we will be studying a category $\Or$ that is \emph{not} a Reedy category. Thus, we will
not use the previous proposition directly. Instead, our \cref{sec:relatively-elegant} establishes a
generalization to categories that only \emph{embed} in a Reedy category in a nice way.

\subsection{Simplicial sets}
\label{sec:simplicial-sets}

To show that a given model category presents $\inftyGpd$, it suffices to exhibit a Quillen equivalence to a
model category already known to present $\inftyGpd$. Here, our standard of comparison will be the classical
Kan--Quillen model structure on simplicial sets~\cite[\S II.3]{quillen67}.

\begin{definition}
  The \emph{simplex category} $\Simp$ is the full subcategory of the category $\Pos$ of posets and monotone
  maps consisting of the finite inhabited linear orders $[n] \defeq \{0 < \cdots < n\}$ for $n \in \BN$.
\end{definition}

This is a strict Reedy category, in fact an Eilenberg-Zilber category (see \cref{eilenberg-zilber}).
The raising and lowering maps are given by the \emph{face} and \emph{degeneracy maps}, defined as the injective and surjective maps of posets, respectively.

\begin{definition}
  We define the usual generating maps of the simplex category:
  \label{def:simplex-degeneracy-face}
  \begin{itemize}
  \item given $n \geq 0$ and $i \in [n]$, the \emph{generating degeneracy map} $s_i \co [n+1] \to [n]$ identifies the elements $i$ and $i + 1$ of $[n+1]$,
  \item given $n \geq 1$ and $i \in [n]$, the \emph{generating face map} $d_i \co [n-1] \to [n]$ skips over the element $i$ of $[n]$.
  \end{itemize}
  \end{definition}

\begin{definition}
  Write $\simp{n} \in \PSh{\Simp}$ for the representable $n$-simplex $\yo{[n]}$. We define the following sets
  of maps in simplicial sets:
\begin{itemize}
\item
For $n \ge 0$, the \emph{boundary inclusion} $\simpbd{n} \mono \simp{n}$ is the union of the subobjects $\simp{i} \mono \simp{n}$ given by a non-invertible face map $[i] \to [n]$.
\item
For $n \ge 1$ and $0 \le k \le n$, the \emph{$k$-horn} $\simphorn{n}{k} \mono \simp{n}$ is the union of the subobjects $\simp{i} \mono \simp{n}$ given by a face map $d \co [i] \to [n]$ whose pullback along $[n] - k \mono [n]$ is non-invertible.
\end{itemize}
\end{definition}

\begin{proposition}[Kan--Quillen model structure]
  \label{simp-model-structure}
  There is a model structure on $\PSh{\Simp}$ with the following weak factorization systems:
  \begin{itemize}
  \item the weak factorization system (cofibration, trivial fibration) is cofibrantly generated by the boundary inclusions;
  \item the weak factorization system (trivial cofibration, fibration) is cofibrantly generated by the horn inclusions.
  \end{itemize}
  We write $\KanSimp$ for this model category.
\end{proposition}

\begin{proof}
This is Theorem~3 and the following Proposition~2 in~\cite[\S II.3]{quillen67}.
\end{proof}

\begin{proposition}[{{\thmcite[\S IV.2]{gabriel67}}}]
  \label{simp-model-structure-alt-descriptionn}
  The weak factorization systems of $\KanSimp$ admit the following alternative descriptions:
  \begin{itemize}
  \item the cofibrations are the monomorphisms; the trivial fibrations are the maps right lifting against monomorphisms.
  \item the weak factorization system (trivial cofibration, fibration) is generated by pushout products $d_k \lbtimes m$ of an endpoint inclusion $d_k \co 1 \to \simp{1}$ with a monomorphism $m \co A \mono B$. \qed
  \end{itemize}
\end{proposition}

\section{Model structures from cubical models of type theory}
\label{sec:model-structure-recognition}

As the cube category $\Or$ is cartesian, we may obtain our cubical-type model structure on $\PSh{\Or}$
immediately by applying existing arguments~\cite{cavallo20,awodey23}, which build on a criterion for
recognizing model structures introduced in the first part of~\cite{sattler17}. We will instead take the
opportunity to present an improvement on the latter criterion, hoping to give an idea of the character of
these model structures along the way.

We begin in \cref{sec:model-structure-general} with a set of conditions necessary and sufficient to determine
when a \emph{premodel structure}---essentially, all the ingredients of a model structure except 2-out-of-3 for
weak equivalences---is in fact a model structure. In \cref{sec:model-structure-cylindrical}, we give a
simplified set of conditions for the case where the premodel structure is equipped with a compatible adjoint
functorial cylinder. Finally, in \cref{sec:model-structure-fep} we show that such a cylindrical premodel
structure satisfies these conditions when all its objects are cofibrant and it satisfies the \emph{fibration
  extension property}. We shall apply this result in \cref{sec:disjunctive-model-structure} to obtain our
model structure on $\PSh{\Or}$; a reader who would prefer to take the existence of the model structure for
granted may skip this section and read only \cref{or-model-structure} in
\cref{sec:disjunctive-model-structure}.

\subsection{Model structures from premodel structures}
\label{sec:model-structure-general}

\begin{definition}[{{\thmcite[Definition 2.1.23]{barton19}}}]
  A \emph{premodel structure} on a finitely complete and cocomplete category $\CatM$ consists of weak
  factorization systems $(\CC,\CF_t)$ (the \emph{cofibrations} and \emph{trivial fibrations}) and
  $(\CC_t,\CF)$ (the \emph{trivial cofibrations} and \emph{fibrations}) on $\CatM$ such that
  $\CC_t \subseteq \CC$ (or equivalently $\CF_t \subseteq \CF$).
\end{definition}

\begin{remark}[Stability under (co)slicing] \label{premodel-structure-slices}
Given an object $X \in \CatM$, any weak factorization system on $\CatM$ descends to weak factorization systems on the slice over $X$ and the coslice under $X$, with left and right classes created by the respective forgetful functor to $\CatM$.
In the same fashion, any premodel structure on $\CatM$ descends to slices and coslices of $\CatM$.
\end{remark}

As any two of the classes $(\CC,\CW,\CF)$ defining a model structure determines the third, any premodel
structure induces a candidate class of weak equivalences.

\begin{definition}
  We say that a morphism in a premodel structure is a \emph{weak equivalence} if it factors as a
  trivial cofibration followed by a trivial fibration; we write $\candweq{\CC}{\CF}$ for the class of such
  morphisms.
\end{definition}

\begin{remark}
  The above definition is only necessarily appropriate when examining when a premodel structure forms a model
  structure: there are premodel structures with a useful definition of weak equivalence not agreeing with
  $\candweq{\CC}{\CF}$. For example, there are various \emph{weak model structures} on semisimplicial sets in
  which not all trivial fibrations are weak equivalences \cite[Remark 5.5.7]{henry20}.
\end{remark}

For the remainder of this section, we fix a premodel category $\CatM$ with factorization systems $(\CC,\CF_t)$
and $(\CC_t,\CF)$. The following two propositions are standard.

\begin{proposition}
  $\CC_t = \CC \cap \candweq{\CC}{\CF}$ and $\CF_t = \CF \cap \candweq{\CC}{\CF}$.
\end{proposition}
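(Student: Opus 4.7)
The plan is to prove the two equalities by the retract argument, treating each inclusion separately and the second equality dually to the first.

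For the forward inclusion $\CC_t \subseteq \CC \cap \candweq{\CC}{\CF}$, I would note that $\CC_t \subseteq \CC$ is part of the definition of a premodel structure, while every $f \in \CC_t$ trivially factors as $\mathrm{id} \circ f$ with $f \in \CC_t$ and $\mathrm{id} \in \CF_t$ (identities lie in every class of a weak factorization system), so $f \in \candweq{\CC}{\CF}$.

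For the reverse inclusion, I would take $f \colon A \to B$ in $\CC \cap \candweq{\CC}{\CF}$ and fix a factorization $f = p \circ i$ with $i \colon A \to C$ in $\CC_t$ and $p \colon C \to B$ in $\CF_t$. The commutative square with $i$ on top, $f$ on the left, $p$ on the right, and $\mathrm{id}_B$ on the bottom admits a lift $j \colon B \to C$ by the lifting property of $(\CC, \CF_t)$, satisfying $j \circ f = i$ and $p \circ j = \mathrm{id}_B$. The diagram
\[
\begin{array}{ccccc}
A & \xrightarrow{\mathrm{id}} & A & \xrightarrow{\mathrm{id}} & A \\
f \downarrow & & i \downarrow & & \downarrow f \\
B & \xrightarrow{j} & C & \xrightarrow{p} & B
\end{array}
\]
then exhibits $f$ as a retract of $i \in \CC_t$, so $f \in \CC_t$ because left classes of weak factorization systems are closed under retracts.

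The second equality $\CF_t = \CF \cap \candweq{\CC}{\CF}$ follows by the dual argument: factor $f \in \CF \cap \candweq{\CC}{\CF}$ as $p \circ i$ with $i \in \CC_t$, $p \in \CF_t$, lift against $f$ using $i \in \CC_t$ and $f \in \CF$ to obtain a section of $i$, and exhibit $f$ as a retract of $p$, so $f \in \CF_t$ by closure of right classes under retracts.

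No step here looks like a real obstacle; the only subtlety is being careful about the direction of the retract diagram in each case, and remembering that isomorphisms (in particular identities) belong to both classes of any weak factorization system, which is what makes the trivial factorization $f = \mathrm{id} \circ f$ witness $f$ as a weak equivalence.
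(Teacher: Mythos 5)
Your proof is correct and is exactly the retract argument the paper invokes by citation to Hovey; you have simply spelled out the details that the paper delegates to \cite[Lemma 1.1.9]{hovey99}, including the observation that identities give the trivial factorization for the forward inclusion.
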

\begin{proof}
  An immediate consequence of the retract argument \cite[Lemma 1.1.9]{hovey99}.
\end{proof}

In light of the above, we use the arrows $\tcof$ and $\tfib$ to denote trivial cofibrations and fibrations
also in a premodel structure.

\begin{corollary}
  $(\CC,\candweq{\CC}{\CF},\CF)$ forms a model structure if and only if $\candweq{\CC}{\CF}$ satisfies
  2-out-of-3.
\end{corollary}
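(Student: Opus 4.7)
The forward direction is immediate: a model structure, by definition, has weak equivalences satisfying 2-out-of-3.

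For the converse, assume 2-out-of-3 for $\CW := \candweq{\CC}{\CF}$. The plan is simply to walk through the model-structure axioms. Bicompleteness of $\CatM$ is part of the premodel-structure data. The two required weak factorization systems, $(\CC, \CF \cap \CW)$ and $(\CC \cap \CW, \CF)$, coincide with the given $(\CC, \CF_t)$ and $(\CC_t, \CF)$ by the preceding proposition. 2-out-of-3 for $\CW$ is then the hypothesis.

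If the adopted formulation of model structure additionally lists retract closure of $\CW$ as a separate axiom, the plan is to verify it by the usual argument: given $f$ a retract of some $g \in \CW$, factor $g = q \circ j$ with $j \in \CC_t$ and $q \in \CF_t$, and use the lifting properties afforded by the two weak factorization systems, combined with the retract diagram, to exhibit $f$ itself as a composite of a trivial cofibration and a trivial fibration, hence as an element of $\CW$. The essential input is that the left and right classes of any weak factorization system are closed under retracts.

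Overall, the proof is essentially bookkeeping: once the preceding proposition is in hand, 2-out-of-3 is the lone axiom distinguishing a premodel structure from a model structure, so I do not anticipate any substantive obstacle beyond (if needed) the standard diagram chase for retract closure of $\CW$.
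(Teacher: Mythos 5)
The paper gives no proof of this corollary, treating it as immediate from the preceding proposition, and your main argument—that the proposition identifies $(\CC,\CF_t) = (\CC, \CF \cap \CW)$ and $(\CC_t,\CF) = (\CC \cap \CW, \CF)$, leaving 2-out-of-3 as the only remaining model-category axiom—is exactly the intended reading. Your caveat about retract closure of $\CW$ is good awareness, but the sketch is too optimistic as stated: a retract diagram for $f$ against $g = q \circ j$ (with $j \in \CC_t$, $q \in \CF_t$) does not directly yield a $(\CC_t,\CF_t)$-factorization of $f$, since conjugating $j$ and $q$ by the retraction maps produces arbitrary morphisms, not a trivial cofibration and a trivial fibration. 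The standard argument (due to Joyal) is more involved and uses 2-out-of-3 essentially, not just lifting: one first factors $f = pi$ with $i \in \CC_t$ and $p \in \CF$, pushes $i$ out along $A \to \dom(g)$ to produce an auxiliary map $h$ that is both a weak equivalence (by 2-out-of-3, since it precomposes with a trivial cofibration to give $g$) and admits $p$ as a retract, and only then applies the lifting argument to conclude that a fibration which is a retract of a weak equivalence is a trivial fibration. If your adopted definition of model structure lists retract closure of $\CW$ as an axiom, this is a genuine step to fill; if not, your proof is complete as written.
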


We now reduce the problem of checking 2-out-of-3 for $\candweq{\CC}{\CF}$ to a reduced collection of special
cases of 2-out-of-3 where some or all maps belong to $\CC$ or $\CF$.

\begin{definition}
  \label{cancellation}
  Given a wide subcategory $\CA \subseteq \CatE$ of a category $\CatE$, we say \emph{$\CA$ has left
    cancellation in $\CatE$} (or \emph{among maps in $\CatE$}) when for every composable pair $g,f$ in
  $\CatE$, if $gf$ and $g$ are in $\CA$ then $f$ is in $\CA$. Dually, $\CA$ has \emph{right cancellation in
    $\CatE$} when for all $g,f$ with $gf,f \in \CA$, we have $g \in \CA$.
\end{definition}

\begin{theorem}
  \label{premodel-to-model}
  $\candweq{\CC}{\CF}$ satisfies 2-out-of-3 exactly if the following hold:
  \begin{conditions}
  \item \label{pre:other-cancellation} trivial cofibrations have left cancellation among cofibrations and
    trivial fibrations have right cancellation among fibrations.
  \item \label{pre:for-all-factors} any (cofibration, trivial fibration) factorization or (trivial
    cofibration, fibration) factorization of a weak equivalence is a (trivial cofibration, trivial fibration)
    factorization;
  \item \label{pre:exchange} any composite of a trivial fibration followed by a trivial cofibration is a weak
    equivalence.
  \end{conditions}
  Note that each of these conditions is self-dual.
\end{theorem}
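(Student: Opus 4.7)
The plan is to prove the equivalence in two directions. For the forward implication, assuming 2-out-of-3 for $\candweq{\CC}{\CF}$, each condition follows quickly. Condition (1) is immediate from the preceding proposition $\CC_t = \CC \cap \candweq{\CC}{\CF}$ (and its dual) together with 2-out-of-3 for weak equivalences. Condition (2) holds because the trivial (co)fibration in any such factorization is already a weak equivalence, so by 2-out-of-3 the other factor is too, hence trivial by the same proposition. Condition (3) holds because trivial fibrations and trivial cofibrations are both weak equivalences, and their composite is a weak equivalence by 2-out-of-3.

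For the reverse implication, there are three instances of 2-out-of-3 to verify. The composition case $f, g \in \candweq{\CC}{\CF} \Rightarrow gf \in \candweq{\CC}{\CF}$ uses only (2) and (3): taking $(\CC_t, \CF_t)$ factorizations $f = p_f i_f$ and $g = p_g i_g$, the middle composite $i_g p_f$ is a trivial fibration followed by a trivial cofibration, hence a weak equivalence by (3), and so factors as $p' i'$ with $i' \in \CC_t$ and $p' \in \CF_t$. Rearranging yields $gf = (p_g p')(i' i_f)$, which is a $(\CC_t, \CF_t)$ factorization since both classes are closed under composition.

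The main obstacle is the two cancellation cases, where all three conditions come into play; by the self-duality noted in the statement, it suffices to handle $gf, g \in \candweq{\CC}{\CF} \Rightarrow f \in \candweq{\CC}{\CF}$. The plan is to factor $g$ as a weak equivalence $g = p_g i_g$ with $i_g \in \CC_t$ and $p_g \in \CF_t$, and to factor $f$ through the $(\CC, \CF_t)$ system as $f = p_f i_f$ with $i_f \in \CC$ and $p_f \in \CF_t$. As before, $i_g p_f$ is a weak equivalence by (3) and factors as $p' i'$ with $i' \in \CC_t$ and $p' \in \CF_t$; this presents $gf = (p_g p')(i' i_f)$ as a $(\CC, \CF_t)$ factorization of the weak equivalence $gf$, so (2) forces $i' i_f \in \CC_t$. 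Condition (1) then applies to the composable cofibrations $i_f$ and $i'$: since $i'$ and $i' i_f$ are trivial cofibrations, $i_f$ is trivial by left cancellation, and $f = p_f i_f$ exhibits $f$ as a weak equivalence. The remaining case is formally dual, using the $(\CC_t, \CF)$ factorization system and the right-cancellation half of condition (1).
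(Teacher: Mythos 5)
Your proof is correct and takes essentially the same approach as the paper's. The only cosmetic difference is which half of the cancellation case you treat explicitly (you prove left cancellation and invoke duality for right cancellation; the paper does the reverse), but the mechanism—use \cref{pre:exchange} to factor the middle composite, rearrange to obtain a factorization of $gf$, apply \cref{pre:for-all-factors} to upgrade one factor to a trivial one, then invoke \cref{pre:other-cancellation}—is identical.
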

\begin{proof}
  \Cref{pre:other-cancellation,pre:for-all-factors,pre:exchange} all follow by straightforward applications of
  2-out-of-3 for $\candweq{\CC}{\CF}$. Suppose conversely that we have
  \labelcref{pre:other-cancellation,pre:for-all-factors,pre:exchange} and let maps $g \co Y \to Z$ and
  $f \co X \to Y$ be given. Then using the two factorization systems and \cref{pre:exchange}, we have the
  following diagram:
  \[
    \begin{tikzcd}
      X \ar{dr}[below left]{f} \ar[dashed,cof]{r} & U \ar[dashed,tfib]{d} \ar[dashed,tcof]{r} & W \ar[dashed,tfib]{d} \\
      & Y \ar{dr}[below left]{g} \ar[dashed,tcof]{r} & V \ar[dashed,fib]{d} \\
      & & Z \rlap{.}
    \end{tikzcd}
  \]

  Suppose first that $g$ and $f$ are weak equivalences. Then we may choose the factorizations of $f$ and $g$
  such that the map $X \cof U$ is a trivial cofibration and the map $V \fib Z$ is a trivial fibration. Thus
  $gf$ factors as a trivial cofibration followed by a trivial fibration, \ie, is a weak equivalence.

  Now suppose that $f$ and $gf$ are weak equivalences. We may choose the factorization of $f$ such that the
  map $X \cof U$ is a trivial cofibration. The composite $X \cof W$ is then a trivial cofibration, so the
  composite $W \fib Z$ is a trivial fibration by \cref{pre:for-all-factors}. Then the map $V \fib Z$ is a
  trivial fibration by \cref{pre:other-cancellation}. Hence $g$ is a weak equivalence. By the dual argument,
  if $g$ and $gf$ are weak equivalences then so is $f$.
\end{proof}

\subsection{Cylindrical premodel structures}
\label{sec:model-structure-cylindrical}

Now we derive a simpler set of criteria for premodel structures equipped with a compatible \emph{adjoint
  functorial cylinder}.

\begin{definition}
  A \emph{functorial cylinder} on a category $\CatE$ is a functor $\Cyl \co \CatE \to \CatE$ equipped with
  \emph{endpoint} and \emph{contraction} transformations fitting in a diagram as shown:
  \[
    \begin{tikzcd}[column sep=huge, row sep=huge]
      \Id \ar{dr}[sloped,below]{=} \ar[dashed]{r}{\Gd_0 \otimes (-)} & \Cyl \ar[dashed]{d}[description, yshift=1ex]{\Ge \otimes (-)} & \Id \ar[dashed]{l}[above]{\Gd_1 \otimes (-)} \ar{dl}[sloped,below]{=} \rlap{.} \\
      & \Id
    \end{tikzcd}
  \]
  An \emph{adjoint functorial cylinder} is a cylinder such that $\Cyl$ is a left adjoint.
\end{definition}

\begin{notation}
  Given a functorial cylinder in a finitely cocomplete category, we have induced \emph{boundary} maps
  $\partial \otimes X \defeq [\Gd_0 \otimes X, \Gd_1 \otimes X] \co X \sqcup X \to \Cyl{X}$.
\end{notation}

There is a dual notion of \emph{functorial path object} consisting of a functor $\Cocyl$ and natural
transformations $\Gd_k \oslash (-) \co \Cyl \to \Id$ and $\Ge \oslash (-) \co \Id \to \Cyl$. By transposition,
each adjoint functorial cylinder corresponds to an adjoint functorial path object.

\begin{remark}[Stability under (co)slicing] \label{functorial-cylinder-slices}
Fix a functorial cylinder denoted as above and an object $X \in \CatE$.
Then $\Cyl$ lifts through the forgetful functor $\Slice{\CatE}{X} \to \CatE$ to a functorial cylinder $\Cyl[\Slice{\CatE}{X}]$ on the slice over $X$.
This crucially uses the contraction.
For example, the action of $\Cyl[\Slice{\CatE}{X}]$ on $f \co Y \to X$ is given by $(\Ge \otimes X)(\Cyl{f}) \co \Cyl{Y} \to X$.
Furthermore, $\Cyl$ lifts through the pushout functor $\CatE \to \Coslice{\CatE}{X}$ to a functorial cylinder $\Cyl[\Coslice{\CatE}{X}]$ on the coslice under $X$.
For example, the action of $\Cyl[\Coslice{\CatE}{X}]$ on $f \co X \to Y$ is given by the pushout of $\Cyl{f} \co \Cyl{X} \to \Cyl{Y}$ along $\Ge \otimes X$.
In both cases, adjointness is preserved, and the corresponding functorial path object is given by performing the dual construction.
\end{remark}

\begin{definition}
  Write ${\app} \co \Func{\CatE}{\CatF} \times \CatE \to \CatF$ for the application bifunctor defined by
  $F \app X \defeq F(X)$. Given a category $\CatE$ with a functorial cylinder and $f \in \CatE^\to$, we
  abbreviate $(\Gd_k \otimes (-)) \leib{\app} f \in \CatE^\to$ as $\Gd_k \lbotimes f$. We
  likewise write $\Ge \lbotimes f$ for Leibniz application of the contraction. We write $\Gd_k \lboslash (-)$
  and $\Ge \lboslash (-)$ for the dual operations associated to a functorial path object.
\end{definition}

\begin{definition}
  Given a finitely cocomplete category $\CatE$ with a functorial cylinder, a weak factorization system
  $(\CL,\CR)$ is \emph{cylindrical} when $\partial \lbotimes (-)$ preserves left maps.
\end{definition}

\begin{definition}
  \label{def:biased-mapping-cylinder}
  Given $f \co A \to B$ in a finitely cocomplete category with a functorial cylinder and $k \in \braces{0,1}$,
  we write $\MCyl{k}{f}$ for its \emph{$k$-sided mapping cylinder}, defined as the pushout
  \[
    \begin{tikzcd}
      A \ar{d}[left]{\Gd_k \otimes A} \ar{r}{f} & B \ar[dashed]{d}{\inr} \\
      \Cyl{A} \ar[dashed]{r}[below]{\inl} & \pushout \MCyl{k}{f}
    \end{tikzcd}
  \]
  The \emph{$k$-sided mapping cylinder factorization} of $f$ is the factorization
  \[
    \begin{tikzcd}[column sep=7em]
      A \ar{r}{\inl (\Gd_{1-k} \otimes A)} & \MCyl{k}{f} \ar{r}{\copair{f(\Ge \otimes A)}{\id}} & B \rlap{.}
    \end{tikzcd}
  \]
\end{definition}

\begin{definition}
  A \emph{cylindrical premodel structure} on a finitely complete and cocomplete category $\CatE$ consists of a
  premodel structure and adjoint functorial cylinder on $\CatE$ such that
  \begin{itemize}
  \item the (cofibration, trivial fibration) and (trivial cofibration, fibration) weak factorization systems
    are cylindrical;
  \item $\Gd_k \lbotimes (-)$ sends cofibrations to trivial cofibrations for $k \in \{0,1\}$.
  \end{itemize}
\end{definition}

\begin{remark}
  The above conditions are transpose to equivalent dual conditions on the corresponding adjoint functorial path object.
  Like its constituent components, the notion of cylindrical premodel structure is thus self-dual: a cylindrical premodel structure on $\CatE$ is the same as a cylindrical premodel structure on $\op{\CatE}$.
\end{remark}

\begin{remark}[Stability under (co)slicing] \label{cylindrical-premodel-structure-slices}
Continuing \cref{premodel-structure-slices,functorial-cylinder-slices}, a cylindrical premodel structure on $\CatE$ descends to cylindrical premodel structures on slices and coslices of $\CatE$.
We may exploit this to simplify arguments by for example working in a slice.
\end{remark}

Fix once more a premodel category $\CatM$ with factorization systems $(\CC,\CF_t)$ and $(\CC_t,\CF)$.  We show
that \cref{pre:exchange} is reducible to \cref{pre:other-cancellation} when $\CatM$ is cylindrical by relating
trivial fibrations with dual strong deformation retracts.

\begin{definition}
  In a category with a functorial cylinder, we say $f \co Y \to X$ is a \emph{dual strong $k$-oriented
    deformation retract} for some $k \in \{0,1\}$ when we have a map $s \co X \to Y$ such that $fs = \id$ and
  a homotopy $h \co \Cyl{Y} \to Y$ such that $h(\Gd_k \otimes Y) = sf$, $h(\Gd_{1-k} \otimes Y) = \id$, and
  $fh$ is a constant homotopy.  Equivalently (if the category is finitely cocomplete), $f$ is a dual strong
  $k$-oriented deformation retract when we have a diagonal filler
  \[
    \begin{tikzcd}[row sep=large, column sep=6em]
      Y \ar{d}[left]{\inl(\Gd_{1-k} \otimes Y)} \ar{r}{=} & Y \ar{d}{f} \\
      \MCyl{k}{f} \ar[dashed]{ur} \ar{r}[below]{\copair{f(\Ge \otimes Y)}{\id}} & X \rlap{.}
    \end{tikzcd}
  \]
\end{definition}

The following is a standard construction (see, \eg, \cite[Lemma I.5.1]{quillen67}).

\begin{lemma}
  \label{to-deformation-retract-objects}
  Let $(\CL,\CR)$ be a cylindrical weak factorization system on a finitely cocomplete category with a
  functorial cylinder.  Then any $\CR$-map between $\CL$-objects is a dual strong $k$-oriented deformation
  retract for any $k \in \{0,1\}$.
\end{lemma}
\begin{proof}
  Let $f \co Y \to X$ be an $\CR$-map between $\CL$-objects. We solve two lifting problems in turn:
  \begin{mathpar}
    \begin{tikzcd}[column sep=large, row sep=large]
      0 \ar{d}[left]{\CL \ni { }} \ar{r} & Y \ar{d}{f} \\
      X \ar[dashed]{ur}{s} \ar{r}[below]{=} & X
    \end{tikzcd}
    \and
    \begin{tikzcd}[column sep=large,row sep=large]
      Y \sqcup Y \ar{d}[left]{\CL \ni \partial \otimes Y} \ar{r}{\copair{sf}{\id}} & Y \ar{d}{f} \\
      \Cyl{Y} \ar[dashed]{ur}{h} \ar{r}[below]{f(\Ge \otimes Y)} & X \rlap{.}
    \end{tikzcd}
  \end{mathpar}
  The maps $s$ and $h$ exhibit $f$ as a dual strong $0$-oriented deformation retract; we may similarly
  construct a $1$-oriented equivalent.
\end{proof}

\begin{corollary}
  \label{to-deformation-retract}
  Let $(\CL,\CR)$ be a cylindrical weak factorization system on a category with a functorial cylinder. Then in
  any diagram of the form
  \[
    \begin{tikzcd}[row sep=small]
      & A \ar{dl}[above left]{m \in \CL} \ar{dr}{n \in \CL} \\
      Y \ar{rr}[below]{f \in \CR} & & X \rlap{,}
    \end{tikzcd}
  \]
  the horizontal map is a dual strong $k$-oriented deformation retract for any $k \in \{0,1\}$.
\end{corollary}
\begin{proof}
  By \cref{to-deformation-retract-objects}, applied in the coslice under $A$ via
  \cref{cylindrical-premodel-structure-slices}.
\end{proof}

\begin{lemma}
  \label{deformation-retract-to-trivial}
  If $\CatM$ is cylindrical, then any fibration $f \co Y \fib X$ that is a dual strong $k$-oriented
  deformation retract for some $k \in \{0,1\}$ is a trivial fibration.
\end{lemma}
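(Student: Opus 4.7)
The plan is to establish the right lifting property of $f$ against an arbitrary cofibration $c \co A \to B$. Given a lifting problem with top map $u \co A \to Y$ and bottom map $v \co B \to X$, and writing $(g, h)$ for the data witnessing $f$ as a dual strong $k$-oriented deformation retract, the candidate $g \cdot v$ already fills the bottom triangle but fails the top, since $gvc = gfu = h \cdot (\Gd_k \otimes Y) \cdot u$ whereas a genuine lift should restrict to $u = h \cdot (\Gd_{1-k} \otimes Y) \cdot u$ on $A$. So the idea is to deform $gv$ along $h$.

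Concretely, I would assemble $gv \co B \to Y$ and $h \cdot \Cyl{u} \co \Cyl{A} \to Y$ into a map out of the pushout $B \sqcup_A \Cyl{A}$ formed along $c$ and $\Gd_k \otimes A$; the two legs agree by the computation above. Pairing this with $v \cdot (\Ge \otimes B) \co \Cyl{B} \to X$ on the bottom produces an auxiliary square whose left-hand map is $\Gd_k \lbotimes c$. Commutativity reduces to $fg = \id$ on the $B$-component and, on the $\Cyl{A}$-component, to the constancy of $fh$, i.e., $fh = f \cdot (\Ge \otimes Y)$, combined with naturality of $\Ge$. Since $\Gd_k \lbotimes c$ is a trivial cofibration by the cylindrical compatibility axiom and $f$ is a fibration, this square admits a diagonal filler $H \co \Cyl{B} \to Y$, and I claim the restriction $\ell \defeq H \cdot (\Gd_{1-k} \otimes B) \co B \to Y$ solves the original problem.

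The verification is a diagram chase: the bottom triangle reduces to $(\Ge \otimes B)(\Gd_{1-k} \otimes B) = \id$, while the top triangle uses $H \cdot \Cyl{c} = h \cdot \Cyl{u}$ (read off from the $\Cyl{A}$-component of the auxiliary upper edge) together with $h \cdot (\Gd_{1-k} \otimes Y) = \id$ to conclude $\ell \cdot c = u$. The only point worth watching is the bookkeeping of orientations: the pushout must be built at $\Gd_k$ so that the gluing is consistent with $h(\Gd_k \otimes Y) = gf$, while the lift is extracted at $\Gd_{1-k}$, where $h$ is the identity and no residual deformation survives. No appeal to \cref{to-deformation-retract} is required here; that lemma enters only in the converse direction used later.
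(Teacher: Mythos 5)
Your proof is correct, but it takes a genuinely different route from the paper. The paper exhibits $f$ directly as a retract of the map $\Gd_k \lboslash f \co \Cocyl{Y} \to Y \times_X \Cocyl{X}$, which is a trivial fibration because $f$ is a fibration and the cylindrical compatibility (read in its transposed, path-object form) makes $\Gd_k \lboslash (-)$ send fibrations to trivial fibrations. The retract diagram is assembled from the deformation-retract data $\copair{s}{h}$ by transposing $h$ to $h^\dagger \co Y \to \Cocyl{Y}$, and the whole argument is a single three-square diagram. You instead verify the right lifting property of $f$ against an arbitrary cofibration $c$ by hand: the naive candidate $gv$ is deformed via $h$ through a secondary lifting problem against $\Gd_k \lbotimes c$, which is a trivial cofibration by the same compatibility axiom read on the cylinder side, and the solution is read off at the opposite endpoint $\Gd_{1-k}$. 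Both arguments lean on exactly the same piece of cylindrical structure, viewed once through the path-object adjoint and once through the cylinder itself. The retract argument is more compact and avoids the diagram chase; your direct verification is more elementary, makes the homotopy-theoretic content (``deform the putative lift along $h$'') explicit, and never needs the adjoint $\Cocyl$, which is pleasant if one has not set up the dual vocabulary. Your bookkeeping of which endpoint to glue at ($\Gd_k$) versus which to read off at ($\Gd_{1-k}$) is right, and the commutativity checks (top leg using $fg = \id$ and the constancy of $fh$ together with naturality of $\Ge$; extraction using $h(\Gd_{1-k} \otimes Y) = \id$ and naturality of $\Gd_{1-k}$) all go through.
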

\begin{proof}
  Let $s \co X \to Y$ and $h \co \Cyl{Y} \to Y$ be as in the definition of dual strong $k$-oriented
  deformation retract. Then the diagram
  \[
    \begin{tikzcd}[row sep=large, column sep=6em]
      Y \ar[fib]{d}[left]{f} \ar{r}{h^\dagger} & \Cocyl{Y} \ar[tfib]{d}[left]{\Gd_k \lboslash f} \ar{r}{\Gd_{1-k} \oslash Y} & Y \ar[fib]{d}{f} \\
      X \ar{r}[below]{\pair{\Ge \oslash X}{s}} & \Cocyl{X} \times_X Y \ar{r}[below]{(\Gd_{1-k} \oslash X)\projl} & X
    \end{tikzcd}
  \]
  exhibits $f$ as a retract of a trivial fibration.
\end{proof}

\begin{lemma}
  \label{cylindrical-weak-equivalence-factorization}
  Suppose $\CatM$ is cylindrical. If trivial fibrations have right cancellation among fibrations, then any
  (trivial cofibration, fibration) factorization of a weak equivalence is a (trivial cofibration, trivial
  fibration) factorization.

  Dually, if trivial cofibrations have left cancellation among cofibrations, then any (cofibration, trivial
  fibration) factorization of a weak equivalence is a (trivial cofibration, trivial fibration) factorization.
\end{lemma}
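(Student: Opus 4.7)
The plan is to prove the first claim; the second follows by the dual argument. Let $f \co A \to B$ be a weak equivalence with factorization $f = pi$ for $i \in \CC_t$ and $p \in \CF$; I aim to show $p \in \CF_t$. First I would invoke the definition of weak equivalence to obtain a second factorization $f = qj$ with $j \in \CC_t$ and $q \in \CF_t$, then solve the commuting square $qj = pi$ using $i \in \CC_t$ and $q \in \CF$ to produce a comparison map $\ell \co C \to D$ with $\ell i = j$ and $q\ell = p$.

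Next I would factor $\ell$ as $\ell = \bar{p}\bar{i}$ with $\bar{i} \co C \to E$ in $\CC_t$ and $\bar{p} \co E \to D$ in $\CF$. The composite $\bar{i}i \co A \to E$ is then a trivial cofibration and $\bar{p}(\bar{i}i) = \ell i = j$, placing us in the configuration of \cref{to-deformation-retract} applied to the weak factorization system $(\CC_t, \CF)$; the required preservation of left maps by $\partial \lbotimes -$ is part of the definition of a cylindrical premodel structure. The conclusion is that $\bar{p}$ is a dual strong $k$-oriented deformation retract, and \cref{deformation-retract-to-trivial} then promotes this fibration to a trivial fibration.

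The composite $q\bar{p}$ is therefore a composite of two trivial fibrations and hence itself a trivial fibration. Since $q\bar{p}\bar{i} = q\ell = p$, the lifting problem with top $\id_C$, bottom $q\bar{p}$, left $\bar{i}$, and right $p$ commutes, and its solution $r \co E \to C$ satisfies $r\bar{i} = \id_C$ and $pr = q\bar{p}$, exhibiting $p$ as a retract of $q\bar{p}$ in the arrow category. Closure of trivial fibrations under retracts then yields $p \in \CF_t$ as desired.

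The main conceptual hurdle is to recognize that the deformation retract machinery should be applied not to $p$ itself — whose codomain $B$ need not carry a trivial cofibration from $A$, leaving \cref{to-deformation-retract} inapplicable — but to the auxiliary fibration $\bar{p}$ arising from a trivial cofibration/fibration factorization of the comparison map $\ell$ between the two factorizations of $f$. Both source and target of $\bar{p}$ receive trivial cofibrations from $A$ (namely $\bar{i}i$ and $j$), so \cref{to-deformation-retract} does apply there, after which trivial fibrancy is transferred back to $p$ by the standard retract argument above.
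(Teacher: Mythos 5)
Your proof is correct, and after careful checking I believe it is a genuine improvement on the paper's argument. The paper proceeds differently: it factors the weak equivalence two ways as $X \tcof U \tfib Y$ and $X \tcof V \fib Y$, forms the pullback $P = U \times_Y V$, factors the gap map $X \to P$ as $X \tcof Z \fib P$, applies \cref{to-deformation-retract,deformation-retract-to-trivial} twice to conclude that both $Z \fib U$ and $Z \fib V$ are trivial fibrations, and finally uses the \emph{right cancellation hypothesis} to extract that $V \fib Y$ is a trivial fibration from the factorization $Z \fib V \fib Y$ of the trivial fibration $Z \fib Y$. Your approach instead produces a comparison map $\ell$ by lifting $i \in \CC_t$ against $q \in \CF$, factors $\ell$ once, applies the deformation-retract lemmas a single time to the resulting fibration $\bar p$, and then transfers trivial fibrancy to $p$ by a retract argument rather than by cancellation. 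The key structural difference is your final step: since $q\bar{p}\,\bar{i} = q\ell = p$ and $\bar{i} \in \CC_t$ while $p \in \CF$, the lift $r$ with $r\bar{i} = \id$ and $pr = q\bar{p}$ exhibits $p$ as a retract of the trivial fibration $q\bar{p}$, and the retract step replaces cancellation entirely. Notably, your argument never invokes the hypothesis that trivial fibrations have right cancellation among fibrations; if your proof holds up (and I believe it does — each lifting problem is well-posed, the application of \cref{to-deformation-retract} with $m = \bar{i}i$, $n = j$, $f = \bar{p}$ has all maps in the correct classes for the weak factorization system $(\CC_t,\CF)$, and the retract diagram commutes), it shows \cref{cylindrical-weak-equivalence-factorization} holds unconditionally for any cylindrical premodel structure. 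This does not change the statement of \cref{cylindrical-premodel-to-model}, since \cref{pre:other-cancellation} is still needed there for the forward implication of \cref{premodel-to-model}, but it is a cleaner proof and a slightly stronger lemma than the one in the paper.
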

\begin{proof}
  Suppose we have a weak equivalence $X \to Y$ factoring as a trivial cofibration followed by a fibration,
  thus a diagram of the following form:
  \[
    \begin{tikzcd}[row sep=tiny]
      & U \ar[tfib]{dr} \\
      X \ar[tcof]{ur} \ar[tcof]{dr} & & Y \rlap{.} \\
      & V \ar[fib]{ur}
    \end{tikzcd}
  \]
  We first take a pullback and factorize the induced gap map as a trivial cofibration followed by a fibration.
  \[
    \begin{tikzcd}
      & & & U \ar[tfib]{dr} \\
      X \ar[tcof,bend left=20]{urrr} \ar[tcof,bend right=20]{drrr} \ar[tcof]{r} & Z \ar[fib]{r} & P \pullback<45>[7ex] \ar[dashed,fib]{ur} \ar[dashed,tfib=below]{dr} & & Y \\
      & & & V \ar[fib]{ur}
    \end{tikzcd}
  \]
  By \cref{to-deformation-retract}, the composites $Z \fib U$ and $Z \fib V$ are dual strong deformation
  retracts, thus trivial fibrations by \cref{deformation-retract-to-trivial}. Then the composite $Z \fib Y$ is
  a trivial fibration by composition, so $V \fib Y$ is a trivial fibration by right cancellation.
\end{proof}

\begin{theorem}
  \label{cylindrical-premodel-to-model}
  Suppose $\CatM$ is a cylindrical premodel structure. Then $\candweq{\CC}{\CF}$ satisfies 2-out-of-3 exactly
  if the following hold:
  \begin{conditions}[start=4]
  \item[(\labelcref{pre:other-cancellation})] trivial cofibrations have left cancellation among cofibrations and
    trivial fibrations have right cancellation among fibrations;
  \item[(\labelcref{pre:exchange})] any composite of a trivial fibration followed by a trivial cofibration is a weak equivalence.
  \end{conditions}
\end{theorem}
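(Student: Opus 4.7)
The plan is to reduce directly to \cref{premodel-to-model}, which already characterizes 2-out-of-3 by the three conditions \labelcref{pre:other-cancellation,pre:for-all-factors,pre:exchange}. The forward direction (necessity of \labelcref{pre:other-cancellation} and \labelcref{pre:exchange}) is immediate from that theorem, since if 2-out-of-3 holds then all three of its conditions hold a fortiori.

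For the converse, assuming \labelcref{pre:other-cancellation} and \labelcref{pre:exchange}, it suffices to derive \labelcref{pre:for-all-factors} and then appeal to \cref{premodel-to-model}. But this is exactly the content of \cref{cylindrical-weak-equivalence-factorization}: that lemma says that in a cylindrical premodel structure, right cancellation of trivial fibrations among fibrations implies that every (trivial cofibration, fibration) factorization of a weak equivalence is a (trivial cofibration, trivial fibration) factorization, and dually for left cancellation and (cofibration, trivial fibration) factorizations. Both halves of \labelcref{pre:for-all-factors} are thus supplied by \labelcref{pre:other-cancellation} together with cylindricity.

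Since the work has already been done in \cref{cylindrical-weak-equivalence-factorization} (whose proof is where the cylinder is actually used, via the dual-strong-deformation-retract argument of \cref{to-deformation-retract,deformation-retract-to-trivial}), there is no real obstacle here; the present theorem is a bookkeeping corollary that records which condition of \cref{premodel-to-model} becomes redundant in the cylindrical case. The only thing to double-check in writing it up is that the self-duality of the cylindrical hypothesis matches the self-duality of \labelcref{pre:for-all-factors}, so that a single invocation of \cref{cylindrical-weak-equivalence-factorization} (in both orientations) really covers both clauses.
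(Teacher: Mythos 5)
Your proposal matches the paper's proof exactly: both deduce the theorem from \cref{premodel-to-model} by observing that \cref{cylindrical-weak-equivalence-factorization} makes condition \labelcref{pre:for-all-factors} redundant under \labelcref{pre:other-cancellation} in the cylindrical setting. The only difference is that you spell out the bookkeeping that the paper leaves implicit.
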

\begin{proof}
  \Cref{premodel-to-model} combined with \cref{cylindrical-weak-equivalence-factorization}.
\end{proof}

Finally, we prove for reference below that the cancellation properties opposite of
\cref{pre:other-cancellation} always hold in a cylindrical premodel structure, though we will not need this
fact.

\begin{lemma}
  \label{left-objects-mapping-cylinder-first-factor}
  Let $(\CL,\CR)$ be a cylindrical weak factorization system on a category with a functorial cylinder.  If $f$
  is a map between $\CL$-objects, then the first factor of its $k$-sided mapping cylinder
  factorization is an $\CL$-map.
\end{lemma}
\begin{proof}
  The first factor $A \to \MCyl{k}{f}$ in the factorization of $f \co A \to B$
  decomposes as the composite
  \[
    \begin{tikzcd}
      A \ar{r}{\inl} & A \sqcup B \ar{r}{\cong} & (A \sqcup A) \sqcup_A B \ar{r}{(\partial \otimes A) \sqcup_A B} &[5em] \Cyl{A} \sqcup_A B \rlap{.}
    \end{tikzcd}
  \]
  The first map is a cobase change of $0 \to B$, thus an $\CL$-map. The last map is a cobase change of
  $\partial \otimes A \cong \partial \lbotimes (0 \to A)$, thus also an $\CL$-map.
\end{proof}

\begin{lemma}
  \label{cylindrical-cancellation-objects}
  If $\CatM$ is cylindrical, then any cofibration between trivially cofibrant objects is a trivial
  cofibration. Dually, any fibration between trivially fibrant objects is a trivial fibration.
\end{lemma}
\begin{proof}
  Let $m \co A \cof B$ be a cofibration between trivially cofibrant objects. Consider the commutative square
  \[
    \begin{tikzcd}[column sep=huge]
      A \ar[cof]{d}[left]{m} \ar[tcof=below]{r}{\inl(\Gd_1 \otimes A)} & \MCyl{0}{m} \ar[tcof=below]{d}{\Gd_0 \lbotimes m} \\
      B \ar{r}[below]{\Gd_1 \otimes B} & \Cyl{B} \rlap{.}
    \end{tikzcd}
  \]
  The top horizontal map is a trivial cofibration by \cref{left-objects-mapping-cylinder-first-factor}, while
  the right vertical map is a trivial cofibration by cylindricality. The bottom map is split monic, so $m$ is
  a retract of a trivial cofibration and thus a trivial cofibration itself.
\end{proof}

\begin{corollary}[{{\thmcite[Lemma 4.5(iii)]{sattler17}}}]
  \label{cylindrical-cancellation}
  If $\CatM$ is cylindrical, then trivial cofibrations have right cancellation among cofibrations. Dually,
  trivial fibrations have left cancellation among fibrations.
\end{corollary}
\begin{proof}
  Given a diagram
   \[
     \begin{tikzcd}[row sep=small]
      & B \ar[cof]{dr}{f} \\
      A \ar[tcof]{ur} \ar[tcof=below]{rr} & & C \rlap{,}
    \end{tikzcd}
  \]
  we apply \cref{cylindrical-cancellation-objects} to $f$ in the coslice under $A$ via
  \cref{cylindrical-premodel-structure-slices}.
\end{proof}

\subsection{Model structures from the fibration extension property}
\label{sec:model-structure-fep}

We now narrow our attention to premodel structures satisfying properties common to cubical-type model
structures: first, that all objects are cofibrant, and second, that fibrations \emph{extend along trivial
  cofibrations}, the latter of which follows in particular from the existence of enough fibrant universes
classifying fibrations. Note that our conditions cease to be self-dual at this point; moreover, the result is
a criterion \emph{sufficient} but not \emph{necessary} to obtain a model structure.

\begin{lemma}
  \label{cancellation-iff-generation}
  Let $\CatM$ be a premodel category. Trivial fibrations have right cancellation in $\CatM$ if and only if the
  (cofibration, trivial fibration) factorization system is generated by cofibrations between cofibrant
  objects. Dually, trivial cofibrations have left cancellation in $\CatM$ if and only if the (trivial
  cofibration, fibration) factorization system is cogenerated by fibrations between fibrant objects.
\end{lemma}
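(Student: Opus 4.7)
The plan is to prove the two directions separately. Write $\mathcal{S}$ for the class of cofibrations between cofibrant objects; ``generated by $\mathcal{S}$'' I read as the condition that $\CF_t$ coincides with the class of maps having the right lifting property against $\mathcal{S}$.

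For the backward direction, assume the latter condition and let composable $f,g$ be given with $gf, f \in \CF_t$. To show $g \in \CF_t$, I would take a lifting square for $g$ against some $i \co A \cof B$ in $\mathcal{S}$, with legs $a \co A \to E$ and $b \co B \to Y$, and solve it in two stages: first lift $a$ through $f$ using that $A$ is cofibrant, yielding $\tilde a \co A \to X$; then lift $i$ against the trivial fibration $gf$ with upper leg $\tilde a$ and lower leg $b$, yielding $\tilde b \co B \to X$; the composite $f \tilde b$ is the desired diagonal $B \to E$.

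For the forward direction, assume right cancellation for $\CF_t$. The inclusion of $\CF_t$ into the right-lifting class of $\mathcal{S}$ is immediate from $\mathcal{S} \subseteq \CC$. For the converse, suppose $p \co E \to Y$ has the right lifting property against $\mathcal{S}$. Since $E$ need not be cofibrant, the hypothesis does not apply to $p$ directly; the plan is to apply it to a cofibrant-domain proxy and then descend via right cancellation. Concretely, I would cofibrantly replace the domain by factoring $\emptyset \to E$ as a cofibration $\emptyset \cof \tilde E$ followed by a trivial fibration $q_E \co \tilde E \tfib E$, then factor $p q_E$ through $(\CC, \CF_t)$ as a cofibration $i \co \tilde E \cof E'$ followed by a trivial fibration $p' \co E' \tfib Y$. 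Both $\tilde E$ and $E'$ are cofibrant, so $i \in \mathcal{S}$. The composite $p q_E$ inherits the right lifting property against $\mathcal{S}$ from $p$ and $q_E \in \CF_t$, the right-lifting class being closed under composition, so $i$ lifts against $p q_E$ with legs $\id_{\tilde E}$ and $p'$; the resulting diagonal exhibits $p q_E$ as a retract of $p'$, placing $p q_E \in \CF_t$. Right cancellation applied to $p \circ q_E = p q_E$ with $q_E \in \CF_t$ then gives $p \in \CF_t$.

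The crux is the forward direction, and specifically the maneuver by which the hypothesis is made to bite: the cofibrant replacement of $E$ produces a map $p q_E$ whose $(\CC,\CF_t)$-factorization sits within $\mathcal{S}$, and right cancellation is precisely the instrument that transfers the conclusion from $p q_E$ back to $p$. The dual statement follows by applying the result in $\op{\CatM}$, where cofibrant becomes fibrant, right cancellation for trivial fibrations becomes left cancellation for trivial cofibrations, and ``generated'' becomes ``cogenerated''.
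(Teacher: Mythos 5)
Your argument matches the paper's: for the forward direction you cofibrantly replace the domain, apply the retract argument to show the composite with the replacement map is a trivial fibration, and then use right cancellation to descend to $p$; for the backward direction — which the paper dismisses as ``an elementary exercise in lifting'' — you correctly work out the two-stage lift through $f$ and then $gf$. One small note: as you observe, the inclusion $\CF_t \subseteq \RLP(\mathcal{S})$ holds unconditionally, so the substance of the forward direction is really the reverse containment, which is where the cofibrant replacement and cancellation do the work.
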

\begin{proof}
  Suppose trivial fibrations have right cancellation in $\CatM$ and let $p \co Y \to X$ be a map lifting
  against cofibrations between cofibrant objects. We take a cofibrant replacement of $Y$, obtaining maps
  $0 \cof Y' \tfib Y$. By cancellation, it suffices to show the composite $p' \co Y' \to X$ is a trivial
  fibration. We appeal to the retract argument: $p'$ has the lifting property against the left part of its
  (cofibration, trivial fibration) factorization---this being a cofibration between cofibrant objects---so is
  a retract of the right part of its factorization. It is thus itself a trivial fibration.

  The converse is an elementary exercise in lifting. Suppose the (cofibration, trivial fibration)
  factorization system is generated by cofibrations between cofibrant objects, let $f \co Z \tfib Y$ and
  $g \co Y \to X$ be such that $gf$ is a trivial fibration. Given a cofibration $m \co A \cof B$ between
  cofibrant objects and a lifting problem
  \[
    \begin{tikzcd}
      A \ar[cof]{d}[left]{m} \ar{r} & Y \ar{d}{g} \\
      B \ar{r} & X \rlap{,}
    \end{tikzcd}
  \]
  we solve lifting problems first against $f$ and then against $gf$:
  \begin{mathpar}
    \begin{tikzcd}
      0 \ar[cof]{d}[left]{!} \ar{r}{!} & Z \ar[tfib=below]{d}{f} \\
      A \ar[dashed]{ur}[above left]{u} \ar{r} & Y
    \end{tikzcd}
    \and
    \begin{tikzcd}
      A \ar[cof]{d}[left]{m} \ar{r}{u} & Z \ar[weq]{d}{gf} \\
      B \ar[dashed]{ur}{v} \ar{r} & X \rlap{.}
    \end{tikzcd}
  \end{mathpar}
  The composite $fv$ is a lift for the original square.
\end{proof}

In particular, \cref{cancellation-iff-generation} tells us that trivial fibrations have right cancellation in
any premodel structure where all objects are cofibrant. If the premodel structure is additionally cylindrical,
then \cref{pre:exchange} is also always satisfied:

\begin{lemma}
  \label{all-cofibrant-to-exchange}
  Let $\CatM$ be cylindrical and suppose that all objects are cofibrant. Then any composite of a trivial
  fibration followed by a trivial cofibration is a weak equivalence.
\end{lemma}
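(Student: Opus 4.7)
The plan is to factor the composite $ip$ (where $p \co X \tfib Y$ and $i \co Y \tcof Z$) as $X \tcof W \fib Z$ using the $(\CC_t, \CF)$ weak factorization system, and then to show the fibration $q \co W \fib Z$ is actually a trivial fibration; together with the trivial cofibration $j \co X \tcof W$ this will present $ip = qj$ as a (trivial cofibration, trivial fibration) factorization, hence a weak equivalence. By \cref{deformation-retract-to-trivial} it suffices to exhibit $q$ as a dual strong deformation retract.

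The inputs come from the cofibrancy hypothesis. Since all objects of $\CatM$ are cofibrant, \cref{to-deformation-retract} applied to the weak factorization system $(\CC, \CF_t)$ with $A$ the initial object shows that $p$ is itself a dual strong $0$-oriented deformation retract, providing a section $g \co Y \to X$ with $pg = \id_Y$ and a homotopy $h \co \Cyl{X} \to X$ satisfying $h(\Gd_0 \otimes X) = gp$, $h(\Gd_1 \otimes X) = \id_X$, and $ph = p(\Ge \otimes X)$.

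From this data, I would first construct a section $r \co Z \to W$ of $q$ by lifting $jg \co Y \to W$ against $q$ along the trivial cofibration $i$; the lifting square commutes because $qjg = ipg = i$, and its filler $r$ satisfies $qr = \id_Z$ and $ri = jg$. I would then extend $h$ to a homotopy on $\Cyl{W}$ by solving
\[
  \begin{tikzcd}[column sep=huge]
    \Cyl{X} \sqcup_{X \sqcup X} (W \sqcup W) \ar[tcof]{d}[left]{\partial \lbotimes j} \ar{r}{\copair{jh}{\copair{rq}{\id}}} & W \ar[fib]{d}{q} \\
    \Cyl{W} \ar[dashed]{ur}{H} \ar{r}[below]{q(\Ge \otimes W)} & Z \rlap{.}
  \end{tikzcd}
\]
The top map is well-defined since $rqj = jgp = jh(\Gd_0 \otimes X)$ and $j = jh(\Gd_1 \otimes X)$; the square commutes thanks to $ph = p(\Ge \otimes X)$ and $qr = \id_Z$; and the left edge is a trivial cofibration because $j$ is one and $\partial \lbotimes -$ preserves trivial cofibrations by the cylindrical axioms. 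The filler $H$ is precisely the homotopy witnessing $q$ as a dual strong $0$-oriented deformation retract.

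The main obstacle is organising this second lift: one must package $h$, the identity, and the idempotent $rq$ into a coherent partial homotopy on the pushout $\Cyl{X} \sqcup_{X \sqcup X} (W \sqcup W)$ whose boundary and contraction data match on every overlap, and then invoke the cylindrical axiom $\partial \lbotimes (\CC_t) \subseteq \CC_t$ to recognise its inclusion into $\Cyl{W}$ as a trivial cofibration. Once that square is correctly set up, everything else is a routine diagram chase.
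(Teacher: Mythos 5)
Your proposal is correct and takes essentially the same approach as the paper: factor $ip$ into a trivial cofibration followed by a fibration $q$, use \cref{to-deformation-retract} together with cofibrancy to equip $p$ with a dual strong deformation retract structure, transfer that structure to $q$ by lifting against trivial cofibrations arising from $i$ and $\partial \lbotimes j$, and conclude via \cref{deformation-retract-to-trivial}. The only difference is organizational: you solve two sequential lifting problems (first for the section $r$, then for the homotopy $H$), whereas the paper packages them into a single lifting problem against a composite of a pushout of $m$ and a pushout of $\partial \lbotimes n$.
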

\begin{proof}
  Suppose we have $p \co B \tfib A$ and $m \co A \tcof X$. We take their composite's (trivial cofibration,
  fibration) factorization:
  \[
    \begin{tikzcd}
      B \ar[tfib]{d}[left]{p} \ar[dashed,tcof=below]{r}{n} & Y \ar[dashed,fib]{d}{q} \\
      A \ar[tcof]{r}[below]{m} & X \rlap{.}
    \end{tikzcd}
  \]
  We intend to show $q$ is a trivial fibration. By \cref{to-deformation-retract} and the assumption that all
  objects are cofibrant, $p$ has the structure of a dual strong 0-oriented deformation retract. Thus we
  have a diagonal lift
  \[
    \begin{tikzcd}[row sep=large,column sep=6em]
      B \ar{d}[left]{\inl(\Gd_1 \otimes B)} \ar{r}{=} & B \ar[tfib=below]{d}{p} \\
      \MCyl{0}{p} \ar[dashed]{ur}{h} \ar{r}[below]{\copair{p(\Ge \otimes B)}{\id}} & A \rlap{.}
    \end{tikzcd}
  \]
  Using that $q$ is a fibration, we show that $q$ is a dual strong deformation retract by solving a lifting
  problem of the form
  \[
    \begin{tikzcd}[column sep=huge]
      Y \ar[bend right=80]{ddd}[left]{\inl(\Gd_1 \otimes Y)} \ar{d}{\inr} \ar[bend left]{dr}{\id} \\
      \MCyl{0}{p} \sqcup_B Y \ar{d}[sloped,below]{\cong} \ar{r}{\copair{nh}{\id}} & Y \ar[fib]{dd}{q} \\
      A \sqcup_B \MCyl{1}{n} \ar[tcof=below]{d} \\[0.5em]
      \MCyl{0}{q}  \ar[dashed]{uur} \ar{r}[below]{\copair{q(\Ge \otimes Y)}{\id}} & X \rlap{.}
    \end{tikzcd}
  \]
  The map $A \sqcup_B \MCyl{1}{n} \tcof \MCyl{0}{q}$ is the following composite:
  \[
    \begin{tikzcd}[column sep=small]
      A \sqcup_B \MCyl{1}{n} \ar[tcof]{r}[below=0.8em]{m \sqcup_B \MCyl{1}{n}}
      &[2em] X \sqcup_B \MCyl{1}{n} \ar{r}{\cong}
      & X \sqcup_Y (\Cyl{B} \sqcup_B (Y \sqcup Y)) \ar[tcof]{r}[below=0.8em]{X \sqcup_Y (\partial \lbotimes n)}
      &[2em] \MCyl{0}{q}
    \end{tikzcd}
  \]
  The first map is a cobase change of the trivial cofibration $m$, while the final map is a cobase change of
  the trivial cofibration $\partial \lbotimes n$; thus the composite is indeed a trivial cofibration. The
  diagonal lift exhibits $q$ as a dual strong deformation retract, thus a trivial fibration by
  \cref{deformation-retract-to-trivial}.
\end{proof}

Thus, in a cylindrical premodel structure where all objects are cofibrant, the only non-trivial property
necessary to apply \cref{cylindrical-premodel-to-model} is left cancellation for trivial cofibrations among
cofibrations. This we can further reduce to the following condition.

\begin{definition}[FEP]
  We say a premodel category $\CatM$ has the \emph{fibration extension property} when for any fibration
  $f \co Y \fib X$ and trivial cofibration $m \co X \tcof X'$, there exists a fibration $f' \co Y' \fib X'$
  whose base change along $m$ is $f$:
  \[
    \begin{tikzcd}
      Y \pullback \ar[fib]{d}[left]{f} \ar[dashed]{r} & Y' \ar[fib,dashed]{d}[right]{f'} \\
      X \ar[tcof]{r}[below]{m} & X' \rlap{.}
    \end{tikzcd}
  \]
\end{definition}

\begin{lemma}
  \label{fep-to-tcof-left-cancellation}
  Suppose $\CatM$ is a premodel category with the fibration extension property. Then trivial cofibrations have
  left cancellation in $\CatM$.
\end{lemma}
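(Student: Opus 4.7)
The plan is to verify left cancellation directly: given composable maps $f \co A \to B$ and $g \co B \to C$ with $gf$ and $g$ trivial cofibrations, show that $f$ itself is a trivial cofibration. By the retract argument, it suffices to factor $f$ as a trivial cofibration $n \co A \tcof A'$ followed by a fibration $q \co A' \fib B$ and to produce a diagonal lift in the square
\[
  \begin{tikzcd}
    A \ar{d}[left]{f} \ar{r}{n} & A' \ar[fib]{d}{q} \\
    B \ar[equal]{r} & B \rlap{.}
  \end{tikzcd}
\]
Given such a lift, $f$ becomes a retract of $n$ and hence a trivial cofibration, since trivial cofibrations are closed under retract.

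To construct the lift, I would invoke the fibration extension property to extend $q$ along the trivial cofibration $g$, obtaining a fibration $q' \co A'' \fib C$ fitting in a pullback square
\[
  \begin{tikzcd}
    A' \pullback \ar[fib]{d}[left]{q} \ar{r}{i} & A'' \ar[fib]{d}{q'} \\
    B \ar[tcof]{r}[below]{g} & C \rlap{.}
  \end{tikzcd}
\]
Now the composite $gf$ is by assumption a trivial cofibration, and one checks that the square with top map $in$, left map $gf$, right map $q'$, and bottom map $\id_C$ commutes. A lift here, afforded by $gf \in \CC_t$ and $q' \in \CF$, yields a map $s \co C \to A''$ with $q's = \id_C$ and $sgf = in$.

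Finally, the composite $sg \co B \to A''$ satisfies $q'(sg) = g$, so the universal property of the pullback produces a map $t \co B \to A'$ with $qt = \id_B$ and $it = sg$. The remaining verification that $tf = n$ is an easy pullback uniqueness check: both maps equalize $q$ (via $qn = f = qtf$) and $i$ (via $in = sgf = itf$). I do not expect a real obstacle; the only step requiring a bit of thought is the move of transporting the $(\CC_t,\CF)$-factorization of $f$ across the FEP pullback, so that the lifting problem is reduced to one in which the composite $gf$—which is known to be a trivial cofibration—plays the role of the left map, rather than $f$ itself, which we do not yet know to be even a cofibration.
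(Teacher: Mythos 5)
Your proof is correct, and it takes a genuinely different route from the paper's. The paper first proves \cref{cancellation-iff-generation}, an abstract equivalence between left cancellation of trivial cofibrations and cogeneration of the (trivial cofibration, fibration) system by fibrations between fibrant objects, and then checks the cogeneration condition using FEP by taking a fibrant replacement of an arbitrary fibration's codomain. You argue directly: factor $f$ as a trivial cofibration $n$ followed by a fibration $q$, use FEP to extend $q$ along $g$, observe that the trivial cofibration $gf$ lifts against the extended fibration $q'$, and push the resulting section back through the pullback to obtain a retraction $t$ of $q$ over $B$; the uniqueness clause of the pullback gives $tf = n$, so $f$ is a retract of $n$ and hence a trivial cofibration. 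Both arguments rest on the retract argument plus the observation that FEP lets you trade a lifting problem against $q$ for one against an extension $q'$ whose codomain is reached via a trivial cofibration from $B$. You fold all of this into a single direct computation, whereas the paper routes through the cogeneration lemma, which has the advantage of being reused (with the dual cofibrant-generation form) for the other cancellation property needed in \cref{all-cofibrant-fep-premodel-to-model}. Your version is more self-contained; the paper's is more modular.
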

\begin{proof}
  By \cref{cancellation-iff-generation}, it suffices to show the (trivial cofibration, fibration)
  factorization system is cogenerated by fibrations between fibrant objects. Suppose $g \co A \to B$ is a map
  with the left lifting property against all fibrations between fibrant objects. Let $f \co Y \fib X$ be an
  arbitrary fibration. Its codomain $X$ has a fibrant replacement $m \co X \tcof X^\lfib$; by the fibration
  extension property there is some $f' \co Y' \fib X^\lfib$ whose pullback along $m$ is $f$. By assumption $g$
  lifts against $f'$, and this lift induces a lift for $g$ against $f$ via the usual argument that right maps
  of a weak factorization system are closed under base change.
\end{proof}

\begin{theorem}
  \label{all-cofibrant-fep-premodel-to-model}
  Let $\CatM$ be a cylindrical premodel category in which
  \begin{conditions}[start=4]
  \item \label{pre:cofibrant} all objects are cofibrant;
  \item the fibration extension property is satisfied.
  \end{conditions}
  Then the premodel structure on $\CatM$ defines a model structure.
\end{theorem}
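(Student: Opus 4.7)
The plan is to verify the hypotheses of \cref{cylindrical-premodel-to-model}, namely \cref{pre:other-cancellation} (the two cancellation properties) and \cref{pre:exchange} (trivial fibration followed by trivial cofibration is a weak equivalence), and then conclude. Each of these is already packaged as a separate lemma earlier in the section, so the proof should be largely a matter of citation.

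First I would handle \cref{pre:exchange}: this is exactly the content of \cref{all-cofibrant-to-exchange}, whose hypotheses (cylindrical and all objects cofibrant) match ours verbatim.

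Next I would address \cref{pre:other-cancellation}. The left cancellation property for trivial cofibrations among cofibrations is immediate from \cref{fep-to-tcof-left-cancellation}, which in fact gives the stronger statement that trivial cofibrations have left cancellation among all maps in $\CatM$. For right cancellation of trivial fibrations among fibrations, I would appeal to \cref{cancellation-iff-generation}: since all objects are cofibrant, every cofibration is trivially a cofibration between cofibrant objects, so the $(\CC, \CF_t)$ factorization system is (tautologically) generated by cofibrations between cofibrant objects, which by the lemma yields the desired right cancellation (again in the stronger form, among all maps of $\CatM$).

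With all three conditions verified, \cref{cylindrical-premodel-to-model} applies and $(\CC, \candweq{\CC}{\CF}, \CF)$ is a model structure. There is no real obstacle here: the theorem is essentially a statement that the preceding lemmas compose correctly, and the only mild care required is to check that the hypotheses of \cref{cancellation-iff-generation} and of \cref{all-cofibrant-to-exchange} really do hold in our cylindrical, all-cofibrant, FEP setting---which they do without modification.
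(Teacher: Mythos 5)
Your proof is correct and matches the paper's own argument essentially verbatim: both invoke \cref{cylindrical-premodel-to-model}, verify \cref{pre:exchange} via \cref{all-cofibrant-to-exchange}, and settle the two halves of \cref{pre:other-cancellation} by \cref{fep-to-tcof-left-cancellation} and \cref{cancellation-iff-generation} respectively. Your observation that all-objects-cofibrant makes the generation hypothesis of \cref{cancellation-iff-generation} tautological is exactly the remark the paper itself makes just after that lemma.
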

\begin{proof}
  By \cref{cylindrical-premodel-to-model}. \Cref{pre:exchange} is satisfied by
  \cref{all-cofibrant-to-exchange}. Trivial cofibrations have left cancellation by
  \cref{fep-to-tcof-left-cancellation}, while trivial fibrations have right cancellation by
  \cref{cancellation-iff-generation}.
\end{proof}

The fibration extension property can in particular be obtained from the existence of fibrant classifiers for
fibrations, \ie, fibrant universes of fibrations. We do not generally expect to have a single classifier for
\emph{all} fibrations, only those below a certain size. Thus we now consider a setup where a premodel category
sits inside a larger category containing classifiers for its fibrations.

\begin{lemma}
  \label{universes-to-fep}
  Let $\CatE$ be a category, and let $\CatM$ be a subcategory of $\CatE$ equipped with a premodel
  structure. Say that a map in $\CatE$ is a fibration if it has the right lifting property against all
  trivial cofibrations in $\CatM$. Suppose we have a class $\CU \subseteq \CatE^\to$ of fibrations between
  fibrant objects that classifies fibrations in $\CatM$, in following sense:
  \begin{enumerate}[label=(\alph*)]
  \item every fibration in $\CatM$ is a pullback of some fibration in $\CU$;
  \item if $p \co E \fib U$ is a map in $\CU$ and $y \co X \to U$ is a map with $X \in \CatM$,
    then there exists a map in $\CatM$ which is the pullback of $p$ along $y$:
    \[
      \begin{tikzcd}
        \bullet \pullback \ar[dashed,fib]{d}[left]{\CatM \ni} \ar[dashed]{r} & E \ar[fib]{d}{p} \\
        X \ar{r}[below]{y} & U \rlap{.}
      \end{tikzcd}
    \]
  \end{enumerate}
  Then $\CatM$ has the fibration extension property.
\end{lemma}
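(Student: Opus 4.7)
The plan is to build the desired fibration extension as a pullback of a classifying fibration in $\CU$. Starting from a fibration $f \co Y \fib X$ in $\CatM$ and a trivial cofibration $m \co X \tcof X'$, hypothesis~(i) lets us present $f$ as a pullback of some $p \co E \fib U$ in $\CU$ along a classifying map $y \co X \to U$ in $\CatE$.

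The crucial step is to extend $y$ along $m$ to a map $y' \co X' \to U$ satisfying $y' m = y$. This is possible because $p$ belongs to $\CU$, so its codomain $U$ is fibrant: the map $U \to 1$ is a fibration in $\CatE$, which by definition has the right lifting property against every trivial cofibration in $\CatM$, including $m$. Applying hypothesis~(ii) to $y'$ (whose domain $X'$ lies in $\CatM$) then produces a fibration $f' \co Y' \fib X'$ in $\CatM$ exhibited as a pullback of $p$ along $y'$.

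It remains to verify that the base change of $f'$ along $m$ is $f$. Since $y' m = y$, pasting the pullback square exhibiting $f'$ over $p$ with any pullback of $f'$ along $m$ produces a pullback of $p$ along $y$, which must therefore agree with the original presentation of $f$ up to canonical isomorphism. Thus $f'$ witnesses the fibration extension property for $f$ along $m$. The only step requiring genuine thought is the extension of $y$: this is exactly where the fibrancy assumption on the codomains of maps in $\CU$ is used, and without it the argument collapses. Everything else is routine pullback pasting.
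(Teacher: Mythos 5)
Your proposal is correct and follows exactly the same route as the paper's proof: classify $f$ via hypothesis~(i), extend the classifying map along $m$ using fibrancy of $U$, pull back via hypothesis~(ii), and conclude by pullback pasting. You merely spell out the fibrancy/lifting step in a bit more detail than the paper does.
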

\begin{proof}
  Let a fibration $f \co Y \fib X$ in $\CatM$ and trivial cofibration $m \co X \tcof X'$ in $\CatM$ be
  given. Then $f$ is the pullback of some fibration between fibrant objects $p \co E \fib U$ in
  $\CatE$ along some map $y \co X \to U$. As $U$ is fibrant, $y$ extends along $m$ to some
  $y' \co X' \to U$. By assumption, we can choose a pullback $f' \co Y' \fib X'$ of $p$ along $y'$
  belonging to $\CatM$. By the pasting law for pullbacks, $f$ is the pullback of $f'$ along $m$.
\end{proof}

\begin{corollary}
  \label{all-cofibrant-universe-to-model}
  Let $\CatE$ be a category, and let $\CatM$ be a subcategory of $\CatE$ equipped with a premodel
  structure. Suppose that $\CatM$ is cylindrical and the following
  conditions are satisfied:
  \begin{conditions}[start=6]
  \item[(\labelcref{pre:cofibrant})] all objects of $\CatM$ are cofibrant;
  \item there is a class of fibrations between fibrant objects in $\CatE$ that classifies fibrations in
    $\CatM$ in the sense of \cref{universes-to-fep}.
  \end{conditions}
  Then the premodel structure on $\CatM$ defines a model structure.
\end{corollary}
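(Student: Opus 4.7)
The plan is to observe that this corollary is essentially a direct repackaging of \cref{all-cofibrant-fep-premodel-to-model} in which the hypothesis of the fibration extension property is replaced by the stronger hypothesis that fibrations of $\CatM$ admit a universe in the ambient category $\CatE$. Since the former theorem already does the heavy lifting, the only task is to bridge the universe condition to the FEP.

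First I would apply \cref{universes-to-fep} directly to the given data: the subcategory $\CatM \hookrightarrow \CatE$, its premodel structure, and the class of fibrations between fibrant objects in $\CatE$ classifying fibrations in $\CatM$ are exactly the hypotheses of that lemma. The conclusion is that $\CatM$ satisfies the fibration extension property.

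Next I would invoke \cref{all-cofibrant-fep-premodel-to-model}: the hypotheses of that theorem are that $\CatM$ is cylindrical, all objects are cofibrant, and the FEP holds. The first two we have by assumption, and the third we have just established. The theorem then yields that the premodel structure on $\CatM$ is a model structure, which is precisely the desired conclusion.

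There is no real obstacle here; the statement is a corollary in the strict sense, assembled by chaining two results already proved. The only thing worth flagging is that the notion of fibration used implicitly in the universe hypothesis (namely, maps in $\CatE$ with the right lifting property against trivial cofibrations of $\CatM$) is exactly the one set up in \cref{universes-to-fep}, so that the citation applies verbatim without any compatibility check.
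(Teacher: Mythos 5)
Your proposal is correct and matches the paper's own argument exactly: apply \cref{universes-to-fep} to obtain the fibration extension property, then conclude by \cref{all-cofibrant-fep-premodel-to-model}.
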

\begin{proof}
  By \cref{all-cofibrant-fep-premodel-to-model,universes-to-fep}.
\end{proof}

\begin{remark}
  \label{fep-without-universe}
  In applications, one usually starts with a set (or category, when working with algebraic weak factorization systems) of \emph{generating} trivial cofibrations that defines the class of fibrations via lifting.
  We can then consider an ``extension'' $\CatE$ of $\CatM$ large enough to build a classifier for fibrations in $\CatM$ (for example, by passing from presheaves to ``large'' presheaves as in \cref{sec:disjunctive-model-structure}).
  Fibrancy of the classifier is shown by extending fibrations along generating trivial cofibrations.

  In such settings, there is also an alternative approach that directly moves from fibration extension along generating trivial cofibrations to general fibration extension.
  For a set of generating trivial cofibrations with representable codomain, this is described in~\cite[\S7]{sattler17}.
  It involves exhibiting trivial cofibrations as codomain retracts of cell complexes of the generators using the small object argument; fibration extension along such a cell complex is then obtained inductively.
  In the model structure we construct in \cref{sec:disjunctive-model-structure}, we instead have a \emph{category} of generating trivial cofibrations with representable codomain (\cref{def:uniform-fibration}).
  However, the same technique still applies, using an analysis of the algebraic small object argument~\cite{sattler16}.
\end{remark}

\section{Semilattice cubical sets}
\label{sec:disjunctive-cubical-sets}

\subsection{The semilattice cube category}

We now introduce this article's main character: the (join-)semilattice cube category $\Or$ generated by an
interval object, finite cartesian products, and a binary connection operator. Like other cartesian cube
categories, it is a (single-sorted) \emph{Lawvere theory} \cite{lawvere63}: a finite product category in which
every object is a finite power of some distinguished object.

\begin{definition}
  \label{def:semilattice-theories}
  The theory of \emph{(join-)semilattices} consists of an associative and commutative binary operator $\lor$
  for which all elements are idempotent, which we call the \emph{join}. This means the following laws:
  \begin{mathpar}
    (x \lor y) \lor z = x \lor (y \lor z), \and
    x \lor y = y \lor x, \and
    x \lor x = x.
  \end{mathpar}
  The theory of \emph{01-bounded (join-)semilattices} consists, in addition to the above, of two constants
  $0$, $1$ and the following laws:
  \begin{mathpar}
    0 \lor x = x, \and 1 \lor x = 1.
  \end{mathpar}
  The \emph{(join-)semilattice cube category} $\Or$ is the Lawvere theory of 01-bounded
  semilattices. Concretely, the objects of $\Or$ are of the form $T^n$ for $n \in \BN$, and the morphisms
  $T^m \to T^n$ are $n$-ary tuples of expressions over $0,1,\lor$ in $m$ variables modulo the equations
  above. We write $\TJoin$ for the Lawvere theory of semilattices.
\end{definition}

\begin{remark}
\label{semilattice-theory-nice}
As a bicategory, $\TJoin$ can be identified with the subcategory of the bicategory of onto (decidable) relations between finite sets.
Equivalently, these are jointly injective spans in finite sets whose second leg is surjective.
This can be strictified to a 1-category by replacing relations with Boolean-valued matrices.
\end{remark}

Recall that the \emph{category of algebras} $\Alg{\CatT} \defeq \FPP{\CatT}{\Set}$ of a Lawvere theory $\CatT$
is the category of finite-product-preserving functors from $\CatT$ to $\Set$, which supports an ``underlying
set'' functor $U \co \FPP{\CatT}{\Set} \to \Set$ given by evaluation at the distinguished object $T^1$. This
functor has a left adjoint $F \co \Set \to \Alg{\CatT}$ which produces the \emph{free $\CatT$-algebra} on a
set, and the covariant Yoneda embedding restricts to an embedding $\op{\CatT} \to \Alg{\CatT}$ sending $T^n$
to the free algebra on $n$ elements. We write $\SLat$ and $\BSLat$ for the categories of algebras of $\TJoin$
and $\Or$ respectively. Concretely, these are the categories of sets equipped with the operations described in
\cref{def:semilattice-theories} and operation-preserving morphisms between them.

It can also be useful to take an order-theoretic perspective on $\SLat$ and $\BSLat$, identifying them as
subcategories of the category $\Pos$ of posets and monotone maps. Recall that the operator $\lor$ induces a
poset structure on any semilattice, with $x \le y$ when $x \lor y = y$.

\begin{proposition}
  $\SLat$ is equivalent to the subcategory of $\Pos$ consisting of posets with finite non-empty joins
  (that is, least upper bounds) and monotone maps that preserve said joins. $\BSLat$ is equivalent to the
  further (non-full) subcategory of posets that also have a minimum and maximum element and monotone maps that
  also preserve them. \qed
\end{proposition}

\begin{remark}
  \label{simplex-semilattice}
  Any finite linear order is a semilattice, and it is 01-bounded if it is inhabited.
  Moreover, any monotone map between linear orders preserves joins.
  Thus the inclusion $\Simp \to \Pos$ factors through a fully faithful inclusion $\Simp \to \SLat$.
\end{remark}

In particular, the interval $[1] \in \Pos$ is a 01-bounded semilattice.

\begin{proposition}
  The interval is a \emph{dualizing object} for a duality between the categories of finite semilattices and
  finite 01-bounded semilattices, which is to say that we have the following categorical equivalence:
  \[
    \begin{tikzcd}[column sep=8em]
      \op{\SLat_\lfin} \ar[bend left=15]{r}{\Hom{\SLat}{-}{[1]}} \ar[phantom]{r}{\rotatebox{270}{$\simeq$}} & \BSLat_\lfin \ar[bend left=15]{l}{\Hom{\BSLat}{-}{[1]}} \rlap{.}
    \end{tikzcd}
  \]
\end{proposition}
\begin{proof}
  By a slight variation on the argument that $\op{\LBSLat_\lfin} \simeq \LBSLat_\lfin$ indicated in \cite[\S
  VI3.6, \S VI.4.6(b)]{johnstone82}.
\end{proof}

Given a semilattice $A$, the 01-bounded semilattice structure on $\Hom{\SLat}{A}{[1]}$ is defined pointwise
from that on $[1]$; likewise $\Hom{\BSLat}{B}{[1]}$ has a pointwise semilattice structure for any
$B \in \BSLat$. This extends the duality between the augmented simplex category and the category of finite
intervals (\ie, finite bounded linear orders and bound-preserving monotone maps) observed by Joyal
\cites[\S1.1]{joyal97}{wraith93}.

By way of this duality, we have in particular an embedding of $\Or$ in the category of finite semilattices,
induced by the embedding of its opposite in its category of models:
\[
  \begin{tikzcd}
    \Or \ar{r}{\yo} & \op{\BSLat_\lfin} \ar{r}{\simeq} & \SLat_\lfin \rlap{.}
  \end{tikzcd}
\]
Here we use that the free semilattice on a finite set of generators is a finite semilattice. Unpacking, this
embedding sends $T^n$ to $\Hom{\BSLat}{F(n)}{[1]} \cong \Hom{\Set}{n}{U[1]} \cong [1]^n$.

\begin{notation}
  Henceforth we regard $\Or$ as a subcategory of $\SLat$, in particular writing $[1]^n$ rather than $T^n$ for
  its objects.
\end{notation}

We can also describe the cubes in $\SLat$ as free semilattices on posets. Given a poset $A$, write $1 \join A$
for the poset obtained by adjoining a minimum element $\bot$ to $A$. For any set $S$, we have a monotone map
$\Gh_n \co 1 \join S \to [1]^S$ sending $\bot$ to $\bot$ and $i \in S$ to the element of $[1]^S$ with $1$ at
its $i$th component and $0$ elsewhere.

\begin{proposition}
  \label{cube-free-on-poset}
  For any $S \in \FinSet$, the map $\Gh_S$ exhibits $[1]^S$ as the free semilattice on the poset $1 \join
  S$. That is, for any $A \in \SLat$ and monotone map $f \co 1 \join S \to A$, there is a unique semilattice
  morphism $f^\dagger \co [1]^S \to A$ such that $f = f^\dagger\Gh_S$. \qed
\end{proposition}

\subsection{Cubical-type model structure on semilattice cubical sets}
\label{sec:disjunctive-model-structure}

We now define our model structure on $\PSh{\Or}$ using \cref{all-cofibrant-universe-to-model}. That our case
satisfies the \lcnamecref{all-cofibrant-universe-to-model}'s hypotheses is essentially an application of
existing work, namely \cite{cavallo19c} or \cite{awodey23}, so we do not give many proofs, only enough of an
outline to guide an unfamiliar reader through the appropriate references. We point to
\cites{gambino17}{sattler17}[\S8]{awodey21-forcing} for further details on constructing model structures of
this kind and to \cite{licata18} for the definition of the universe in particular.

\begin{assumption}
  For simplicity, we work with a single universe: we assume a strongly inaccessible cardinal $\Gk$ and define
  a model structure on the category $\PSh[\Gk]{\Or}$ of $\Gk$-small presheaves. Outside of this section, we
  suppress the subscript $\Gk$. As described in \cref{fep-without-universe}, it is possible to eliminate the
  use of universes at the cost of some complication; alternatively, one can assume that every fibration
  belongs to some universe to obtain a model structure on all of $\PSh{\Or}$.
\end{assumption}

\begin{notation}
  We write $\ival \defeq \yo{[1]} \in \PSh{\Or}$ for the representable 1-cube. We write $\Gd_k \co 1 \to [1]$
  for the \emph{endpoint inclusion} picking out $k \in \{0,1\}$ and write $\Ge$ for the unique
  \emph{degeneracy} map $[1] \to 1$.
\end{notation}

\subsubsection{Factorization systems} As analyzed by Gambino and Sattler \cite{gambino17}, a key feature of
cubical-type model structures is that their fibrations are characterized by a \emph{uniform lifting
  property}. This characterization is used to obtain the model structure's factorization systems
constructively and to define fibrant universes of fibrations. We avoid formally introducing algebraic weak
factorization systems \cite{grandis06,garner09} for the sake of concision, but these form the
conceptual backbone of Gambino and Sattler's results.

\begin{definition}[Uniform lifting]
  Let $u \co \CatI \to \CatE^\to$ be a functor. A \emph{right $u$-map} is a map $f \co Y \to X$ in $\CatE$
  equipped with
  \begin{itemize}
  \item for each $i \in \CatI$ and filling problem
    \[
      \begin{tikzcd}
        A_i \ar{d}[left]{u_i} \ar{r}{h} & Y \ar{d}{f} \\
        B_i \ar{r}[below]{k} & X \rlap{,}
      \end{tikzcd}
    \]
    a diagonal filler $\Gf(i,h,k) \co B_i \to Y$;
  \item such that for each $\Ga \co j \to i$ and diagram
    \[
      \begin{tikzcd}
        A_j \ar[phantom]{dr}{u_\Ga} \ar{d}[left]{u_j} \ar{r}{a} & A_i \ar{d}{u_i} \ar{r}{h} & Y \ar{d}{f} \\
        B_j \ar{r}[below]{b} & B_i \ar{r}[below]{k} & X \rlap{,}
      \end{tikzcd}
    \]
    we have $\Gf(i,h,k)b = \Gf(j,ha, kb)$.
  \end{itemize}
  When $u$ is a subcategory inclusion, we may instead say that $f$ is a \emph{right $\CatI$-map}.
\end{definition}

\begin{notation}
  Given a category $\CatE$, write $\CartArr{\CatE} \subseteq \CatE^\to$ for the category of arrows in $\CatE$
  and cartesian squares between them.
\end{notation}

Write $\CM$ for the full subcategory of $\CartArr{\PSh[\Gk]{\Or}}$ consisting of monomorphisms.

\begin{definition}
  We say a map in $\CartArr{\PSh[\Gk]{\Or}}$ is a \emph{uniform trivial fibration} when it is a right
  $\CM$-map.
\end{definition}

\begin{remark}
  If working constructively, one must replace $\CM$ with the full subcategory $\CM_\ldec$ of levelwise
  decidable monomorphisms, \ie, those $m \co A \mono B$ such that $m_I$ is isomorphic to a coproduct inclusion
  for all $I \in \Or$. This restriction is used (see \eg, Orton and Pitts \cite[Theorem 8.4]{orton18}) in the
  proof of the \emph{realignment} property, which is important to the construction of fibrant universes.
\end{remark}

The following proposition lets us characterize the trivial fibrations (and later, the fibrations) as the maps
with uniform right lifting against a \emph{small} category.

\begin{proposition}[{{\thmcite[Proposition 5.16]{gambino17}}}]
  Let $\CatC$ be a small category and $\CatI$ be a full subcategory of $\CartArr{\PSh{\CatC}}$ closed under
  base change to representables, \ie, such that $\subst{x}f \in \CatI$ for any $f \co Y \to X$ in $\CatI$ and
  $x \co \yo{a} \to X$. Write $\CatI^{\yo}$ for the full subcategory of $\CatI$ consisting of maps with
  representable codomain. Then a map in $\PSh{\CatC}$ is a right $\CatI$-map if and only if it is a right
  $\CatI^{\yo}$-map.  \qed
\end{proposition}

\begin{proposition}[Uniform trivial fibrations]
  We have a weak factorization system $(\CM,\CF_t)$ where $\CF_t$ is the class of uniform trivial fibrations.
\end{proposition}
\begin{proof}
  By \cite[Theorem 9.1]{gambino17}, which goes through Garner's algebraic small object argument
  \cite{garner09}, we have a factorization system $(\CC,\CF_t)$ where $\CF_t$ is the class of uniform trivial
  fibrations. Here we need that the right $\CM$-maps coincide with the right $\CM^{\yo}$-maps and that
  $\CM^{\yo}$ is a small category. That the algebraic small object argument is constructive in this case is
  explained in \cite[Remark 9.4]{gambino17}; see also \cite[Appendix C]{henry20}.

  An alternative construction of the factorization using partial map classifiers is described in \cite[Remark
  9.5]{gambino17} and used by Awodey et al.~\cite{awodey21-forcing,awodey23}, while
  \citeauthor{swan18d}~\cite[\S6]{swan18d} describes a construction using W-types with reductions. The partial
  map classifier factorization factors any map as a mono followed by a trivial fibration. By the retract
  argument, any map in $\CC$ is then a retract of a mono and hence itself monic, so $\CC = \CM$.
\end{proof}

\begin{definition}
  \label{def:uniform-fibration}
  Define $u_\Gd \co \{0,1\} \times \CM^{\yo} \to \PSh[\Gk]{\Or}^\to$ by
  $u_\Gd(k,-) \defeq \Gd_k \lbtimes (-)$. A \emph{uniform fibration} is a right $u_\Gd$-map.
\end{definition}

\begin{proposition}[Uniform fibrations]
  There exists a weak factorization system $(\CC_t,\CF)$ such that $\CF$ is the class of uniform fibrations.
\end{proposition}
\begin{proof}
  By \cite[Theorem 7.5]{gambino17}, using the algebraic small object argument. Again, see \cite[Remark
  9.4]{gambino17} for discussion of constructivity.
\end{proof}

Though the algebraic/uniform description is important to constructively establish the \emph{existence} of
these weak factorization systems, we can also---still constructively---recognize that $\CF_t$ and $\CF$ are
classes of maps with lifting properties in the non-algebraic sense.

\begin{proposition}
  Let $f \co Y \to X$ in $\PSh[\Gk]{\Or}$. Then
  \begin{itemize}
  \item $f$ is a right $\CM$-map if and only if it has the right lifting property against all monomorphisms;
  \item $f$ is a right $u_\Gd$-map if and only if it has the right lifting property with respect to
    $\Gd_k \lbtimes m$ for all $k \in \{0,1\}$ and monomorphisms $m$.
  \end{itemize}
\end{proposition}
\begin{proof}
  By \cite[Theorem 9.9]{gambino17}.
\end{proof}

With the two factorization systems in hand, it is straightforward to verify the following.

\begin{proposition}
  $(\CC_t,\CF)$ and $(\CM,\CF_t)$, together with the adjoint functorial cylinder
  $\ival \times (-) \dashv (-)^\ival$, constitute a cylindrical premodel structure.
\qed
\end{proposition}

\subsubsection{Unbiased fibrations}

In order to apply \cref{all-cofibrant-universe-to-model}, we must check that we have a fibration between
fibrant objects in $\PSh{\Or}$ classifying fibrations in $\PSh[\Gk]{\Or}$. This follows from work on cubical
models of type theory, specifically the interpretation of universes. Our cube category falls within the ambit
of \cite{abcfhl}, which describes a universe $p_\lfib \co \widetilde U_\lfib \to U_\lfib$ with fibration
structures on $p_\lfib$ and $U_\lfib$ in type-theoretic terms; Awodey gives a construction of the same in
categorical language \cite[\S\S6--8]{awodey23}.

However, the fibrations used in these models are not \emph{a priori} the fibrations we defined in the previous
section: they are what Awodey \cite{awodey23} calls \emph{unbiased fibrations}, which lift not only against
(pushout products with) endpoint inclusions $\Gd_k \co 1 \to \ival$ but against generalized points on the
interval. To see that $\CMSOr$ is compatible with this model of type theory, we check here that biased (\ie,
ordinary) and unbiased fibrations coincide in the presence of a connection.

\begin{definition}
  Given $r \co B \to \ival$ and $f \co A \to B$, their \emph{unbiased mapping cylinder} is the following
  pushout:
  \[
    \begin{tikzcd}
      A \ar{d}[left]{\pair{rf}{\id_A}} \ar{r}[above]{f} & B \ar[dashed]{d}{c_r} \\
      \ival \times A \ar[dashed]{r}[below]{d_r} & \MCyl{r}{f} \pushout \rlap{.}
    \end{tikzcd}
  \]
  Note that $\MCyl{\Gd_k !_B}{f}$ is the ordinary $k$-sided mapping cylinder
  (\cref{def:biased-mapping-cylinder}).  We write $r \lbtimes_B m\co \MCyl{r}{m} \to \ival \times B$
  for the unique map fitting in the diagram
  \[
    \begin{tikzcd}
      A \ar{d}[left]{\pair{rf}{\id_A}} \ar{r}[above]{f} & B \ar{d}{c_r} \ar[bend left]{ddr}{\pair{r}{\id_B}} \\
      \ival \times A \ar[bend right]{drr}[below left]{\ival \times m} \ar{r}[below]{d_r} & \MCyl{r}{f} \pushout \ar[dashed]{dr}[left,yshift=-4pt]{r \lbtimes_B m} \\
      & & \ival \times B\rlap{.}
    \end{tikzcd}
  \]
  This is the pushout product in the slice over $B$ of $\pair{r}{\id_B} \co \id_B \to \Ge \times B$ and
  $m \co m \to \id_B$, hence the notation. Note that $(\Gd_k !_B) \lbtimes_B f$ is the ordinary pushout
  product $\Gd_k \lbtimes f$.
\end{definition}

\begin{definition}
  We say $f \co Y \to X$ is an \emph{unbiased fibration} when it has the right lifting property against
  $r \lbtimes_B m$ for all $r \co B \to \ival$ and $m \co A \cof B$.
\end{definition}

\begin{lemma}
  \label{unbiased-trivial-cof}
  $r \lbtimes_B m$ is a trivial cofibration for any $r \co B \to \ival$ and $m \co A \cof B$.
\end{lemma}
\begin{proof}
  Define $u(i,a) \defeq (i \lor r(m(a)), a) \co \ival \times A \to \ival \times A$.
  Take a pushout of $\Gd_0 \lbtimes m$:
  \[
    \begin{tikzcd}[column sep=large, row sep=large]
      \MCyl{0}{m} \ar[tcof]{d}[left]{\Gd_0 \lbtimes m} \ar{r}{u \sqcup \id} & \MCyl{r}{m} \ar[dashed,tcof=below]{d}{n} \\
      \ival \times B \ar[dashed]{r}[below]{b} & C \pushout \rlap{.}
    \end{tikzcd}
  \]
  Define a map $v \co \MCyl{1}{r \lbtimes_B m} \to C$ like so:
  \[
    \begin{tikzcd}
      \MCyl{r}{m} \ar{d}[left]{\Gd_1 \times \MCyl{r}{m}} \ar{r}{r \lbtimes_B m} & \ival \times B \ar{d}{c_1} \ar{r}{\Ge \times B} & B \ar{d}{\Gd_1 \times B} \\
      \ival \times \MCyl{r}{m} \ar{d}[sloped,below]{\cong} \ar{r}[below]{d_1} & \pushout \MCyl{1}{r \lbtimes_B m} \ar[dashed]{dr}[description]{v} & \ival \times B \ar{d}{b} \\
      \MCyl{r(\Ge \times B)}{\ival \times m} \ar{rr}[below]{\copair{nd_{r}({\lor} \times A)}{b}} & & C \rlap{.}
    \end{tikzcd}
  \]
  Take the pushout of $\Gd_1 \lbtimes (r \lbtimes_B m)$ by this map:
  \[
    \begin{tikzcd}[row sep=large]
      \MCyl{1}{r \lbtimes_B m} \ar[tcof]{d}[left]{\Gd_1 \lbtimes (r \lbtimes_B m)} \ar{r}{v} & C \ar[dashed,tcof=below]{d}{n'} \\
      \ival \times \ival \times B \ar[dashed]{r}[below]{b'} & \pushout D \rlap{.}
    \end{tikzcd}
  \]
  Then we can exhibit $r \lbtimes_B m$ as a retract of $n'n$:
  \[
    \begin{tikzcd}
      \MCyl{r}{m} \ar{dd}[left]{r \lbtimes_B m} \ar{dr}[description]{d_1(\Gd_0 \times \MCyl{r}{m})} \ar{rr}{\id} &[2em] &[-2em] \MCyl{r}{m} \ar[tcof=below]{d}{n} \ar{r}{\id} &[6.5em] \MCyl{r}{m} \ar{dd}{r \lbtimes_B m} \\
      & \MCyl{1}{r \lbtimes_B m} \ar{d} \ar{r}{v} & C \ar[tcof=below]{d}{n'} & \\
      \ival \times B \ar{r}[below]{\Gd_0 \times \ival \times B} & \ival \times \ival \times B \ar{r}[below]{b'} & \pushout D \ar{r}[below]{\copair{\Ge \times \ival \times B}{\copair{\id}{r \lbtimes_B m}}} & \ival \times B \rlap{.}
    \end{tikzcd}
  \]
  As a retract of a trivial cofibration, $r \lbtimes_B m$ is thus a trivial cofibration.
\end{proof}

\begin{corollary}
  \label{unbiased-fib}
  A map is a fibration in $\CMSOr$ if and only if it is an unbiased fibration.
\end{corollary}
\begin{proof}
  If $f \co Y \to X$ is an unbiased fibration, then lifting against any $\Gd_k \lbtimes m$ is obtained as
  lifting against $(\Gd_k{!}_B) \lbtimes_B m$. The converse is \cref{unbiased-trivial-cof}.
\end{proof}

\begin{remark}
  For the reader more comfortable with cubical type theories, we give the type-theoretic analogue to the proof
  of \cref{unbiased-fib}. The ABCHFL type theory equips types with a composition operator of the following
  form.
  \begin{mathpar}
    \inferrule
    {i \co \ival \vdash A\ \mathsf{type} \\
      \Gf\ \mathsf{cof} \\
      r,s \co \ival \\\\
      i \co \ival, \Gf \vdash M \co A \\
      M_0 \co \tmsubst{A}{r}{i} \\
      \Gf \vdash \tmsubst{M}{r}{i} = M_0 \co \tmsubst{A}{r}{i}}
    {\tmcom{i.A}{r}{s}{\tmface{\Gf}{i.M}}{M_0} \co \tmsubst{A}{s}{i}}
    \and
    \tmcom{i.A}{r}{r}{\tmface{\Gf}{i.M}}{M_0} = M_0 \co \tmsubst{A}{r}{i}
    \and
    \Gf \vdash \tmcom{i.A}{r}{s}{\tmface{\Gf}{i.M}}{M_0} = \tmsubst{M}{s}{i} \co \tmsubst{A}{s}{i}
  \end{mathpar}
  In the presence of a connection, we can derive a term satisfying the equations required of
  $\tmcom{i.A}{r}{s}{\tmface{\Gf}{i.M}}{M_0}$ using only composition $\Ge \to s$ where $\Ge \in \{0,1\}$,
  namely the term $Q$ below.
  \begin{align*}
    P(k) &\defeq \tmcom{j.\tmsubst{A}{r \lor j}{i}}{0}{k}{%
      \begin{array}{rcl}
        \Gf &\mapsto& j.\tmsubst{M}{r \lor j}{i}
      \end{array}
    }{M_0} \\
    Q &\defeq \tmcom{j.\tmsubst{A}{s \lor j}{i}}{1}{0}{%
      \begin{array}{rcl}
        \Gf &\mapsto& j.\tmsubst{M}{s \lor j}{i} \\
        r \equiv s &\mapsto& j.P(j)
      \end{array}
    }{P(1)}
  \end{align*}
\end{remark}

\begin{remark}
  \label{equivariant-lifting-from-connection}
  We can also use $\lor$ to show that any fibration is an \emph{equivariant} fibration in the sense of the
  ACCRS model structure \cite{accrs}. For simplicity, let us restrict attention to lifting along
  $\Gd^n_1 \co 1 \to \ival^n$, which is the simplest case; we leave it as an exercise to formulate and derive
  unbiased equivariant lifting by combining the proof of \cref{unbiased-trivial-cof} with the following
  sketch. A more complete proof (for simplicial sets rather than semilattice cubical sets, but with the same
  argument) is in \cite[Proposition 6.1.7]{accrs}.

  Write $\GS \defeq \core{\Or}$ for the wide subcategory of isomorphisms of $\Or$. We have a functor
  $\Gd \co \GS \to \PSh{\Or}^\to$ sending $[1]^n$ to $\Gd_1^n \co 1 \to \ival^n$ and
  $\Gs \co [1]^n \cong [1]^n$ to $(\id,\Gs) \co \Gd_1^n \to \Gd_1^n$. Take $u_{\Gd\GS}$ to be the composite
  \[
    \begin{tikzcd}[column sep=huge]
      \GS \times \CM^{\yo} \ar{r}{\Gd \times \CM^{\yo}} & \PSh{\Or}^\to \times \CM^{\yo}
   \ar{r}{\lbtimes} & \PSh{\Or}^\to \rlap{.}
    \end{tikzcd}
  \]
  A \emph{uniform equivariant 1-fibration} is a right $u_{\Gd\GS}$-map.

  Suppose $f \co Y \to X$ is a uniform fibration and let $m \co A \mono B$ and a lifting problem
  $(y,x) \co \Gd_1^n \lbtimes m \to f$ be given. We have a map
  ${\uparrow_n} \co {[1]} \times [1]^n \to [1]^n$ sending
  $(t, i_1, \ldots, i_n) \mapsto (t \lor i_1, \ldots, t \lor i_n)$ which we use to form a lifting problem
  against $\Gd_1 \lbtimes (\ival^n \times m)$:
  \[
    \begin{tikzcd}[row sep=large]
      (\ival \times \ival^n \times A) \sqcup_{\ival \times A} (\ival \times B) \ar[tcof]{d}[left]{\Gd_1 \lbtimes (\ival^n \times m)} \ar{r}{({\uparrow^n} \times A) \sqcup (\Ge \times B)} &[7em] (\ival^n \times A) \sqcup_A B \ar{d}{\Gd_1^n \lbtimes m} \ar{r}{y} & Y \ar[fib]{d}{f} \\
      \ival \times \ival^n \times B \ar{r}[below]{{\uparrow^n} \times B} \ar[dashed]{urr}[description]{j} & \ival^n \times B \ar{r}[below]{x} & X
    \end{tikzcd}
  \]
  The composite
  \begin{tikzcd}
    \ival^n \times B \ar{r}{\Gd_0 \times \ival \times B} &[3em] \ival \times \ival^n \times B \ar{r}{j} & Y
  \end{tikzcd}
  is our desired lift, while the uniformity conditions follow from those on $f$ and the fact that
  ${\uparrow_n} \circ (\ival \times \Gs) \cong \Gs \circ {\uparrow_n}$ for any $\Gs \co [1]^n \cong [1]^n$.
\end{remark}

\subsubsection{Universe}

To define a universe classifying fibrations, we use a theorem of \citeauthor*{licata18} \cite{licata18}. The
cardinal $\Gk$ provides a Grothendieck universe in $\Set$, from which Hofmann and Streicher's construction
produces a universe $p_U \co \widetilde U \to U$ in $\PSh{\Or}$ classifying $\Gk$-small maps
\cite{hofmann97,streicher05,awodey22}. Our classifier for $\Gk$-small fibrations shall be a subuniverse of
$p_U$. The key property of $\PSh{\Or}$ is that the cocylinder $(-)^\ival$ has a right adjoint, \ie, that
$\ival$ is \emph{internally tiny}: we have $(-)^\ival \cong \subst{((-) \times [1])}$ and therefore
$(-)^\ival \dashv \sqrt[\ival]{-} \defeq \ran{((-) \times [1])}$. This property is common to cube categories
but fails for example in simplicial sets. We refer to Swan \cite{swan22} for a deeper analysis.

Given a $\Gk$-small map $f \co Y \to X$ with characteristic map $A \co X \to U$, we define a family
$X^\ival \to U$ whose sections correspond to fibration structures on $A$. To do so, it is convenient to work
in the internal extensional type theory of the universe $p_U$ in the style of Orton and Pitts
\cite{orton18}.\footnote{We refer to \cite{awodey21-forcing} for a detailed translation between external and
  internal constructions in presheaf categories and to \cite[\S6]{awodey23} for a fully externalized
  argument.} Writing $\top \co 1 \to \GO$ for the subobject classifier in $\PSh{\Or}$, the maps
${!}_\GO \co \GO \to 1$ and $\top$ are both classified by $p_U$,\footnote{If working predicatively, one should
  replace $\GO$ with the classifier for levelwise decidable subobjects.} so appear as a closed type
$\cdot \vdash \GO : U$ and type family $\Gf \co \GO \vdash {[\Gf]} : U$ respectively. The interval likewise
appears as a closed type $\cdot \vdash \ival : U$ with inhabitants $\cdot \vdash 0,1 : \ival$.

\begin{definition}
  Given a type $A \co U$, define its type of trivial fibration structures $\TFibStr A \co U$ as follows:
  \begin{flalign*}
    \qquad \TFibStr A &\defeq \Pi \Gf \co \GO.\ \Pi v \co {[\Gf]} \to A.\ \Sigma a \co A.\ \Pi \Ga \co {[\Gf]}.\ v(\Ga) = a. &
  \end{flalign*}
\end{definition}

\begin{definition}
  Given $k \in \{0,1\}$ and $A \co X \to U$, define the pullback exponential
  $(\Gd_k \lbto A) : (\Sigma p \co X^\ival.\ A(p(k))) \to U$ internally as follows:
  \begin{flalign*}
    \qquad (\Gd_k \lbto A)(p,a) &\defeq \Sigma q \co (\Pi i \co \ival.\ A(p(i))).\ q(k) = a. &
  \end{flalign*}
\end{definition}

\begin{definition}
  Given $A \co X \to U$, define $\FibStr_kA \co X^\ival \to U$ for $k \in \{0,1\}$ and then
  $\FibStr A \co X^\ival \to U$ as follows:
    \begin{flalign*}
      \qquad (\FibStr_kA)(p) &\defeq \Pi a \co A(p(k)).\ \TFibStr ((\Gd_k \lbto A)(p,a)) & \\
      \qquad (\FibStr A)(p) &\defeq (\FibStr_0A)(p) \times (\FibStr_1A)(p). &
    \end{flalign*}
\end{definition}

\begin{proposition}
  \label{uniform-iff-str}
  Let $f \co Y \to X$ be given with classifying map $A \co X \to U$. Then $f$ is a uniform fibration if and
  only if the type $\Pi p \co X^\ival.\ (\FibStr A)(p)$ is inhabited.
\end{proposition}
\begin{proof}
  See \cite[Corollary 8.7]{awodey21-forcing}.
\end{proof}

Using the right adjoint to $(-)^\ival$, we carve out the subuniverse of $p_U$ corresponding to families
$A \co X \to U$ for which $\Pi p \co X^\ival.\ (\FibStr A)(p)$ is inhabited. For this step we return to
working externally, as $\sqrt[\ival]{-}$ does not straightforwardly internalize; \textcite{licata18} use a
global sections modality to axiomatize $\sqrt[\ival]{-}$ internally, while \textcite{riley24} has recently
proposed a type theory which directly represents $\sqrt[\ival]{-}$ as a modality. The following definition and
proposition constitute Theorem 5.2 of \cite{licata18}.

\begin{definition}
  Define $p_\lfib \co \widetilde U_\lfib \to U_\lfib$ by pullback as follows:
  \[
    \begin{tikzcd}
      \widetilde U_\lfib \pullback \ar[dashed]{d}[left]{p_\lfib} \ar[dashed]{r}{\widetilde\projr} & \widetilde U \ar{d}{p_U} \\
      U_\lfib \pullback \ar[dashed]{d}[left]{\projl} \ar[dashed]{r}{\projr} & U \ar{d}[right]{(\FibStr \id_U)^\dagger} \\
      \sqrt[\ival]{\widetilde U} \ar{r}[below]{\sqrt[\ival]{p_U}} & \sqrt[\ival]{U} \rlap{.}
    \end{tikzcd}
  \]
\end{definition}

\begin{proposition}[{{\thmcite[Theorem 5.2]{licata18}}}]
  If $f \co Y \to X$ is the pullback of $p_U$ along some $A \co X \to U$, then $f$ is a uniform fibration if
  and only if $A$ factors through $\projr \co U_\lfib \to U$.
\qed
\end{proposition}

\begin{corollary}
  \label{uproj-fibration}
  The map $p_\lfib$ is a uniform fibration.
\end{corollary}
\begin{proof}
  $p_\lfib$ is the pullback of $p_U$ along $\projr$, which of course factors through itself.
\end{proof}

Finally, we need a fibrancy structure on the universe $U$ itself. This is the most technically involved
argument; we defer to prior work.

\begin{proposition}
  \label{u-fibrant}
  The object $U_\lfib$ is uniform fibrant.
\end{proposition}
\begin{proof}
  A fibrancy structure on $U_\lfib$ is described in type-theoretic language in \cite[\S2.12]{abcfhl}, while
  Awodey \cite[\S8]{awodey23} gives an external categorical construction.
\end{proof}

\begin{theorem}[Cubical-type model structure on semilattice cubical sets]
  \label{or-model-structure}
  There is a model structure on $\PSh[\Gk]{\Or}$ in which
  \begin{itemize}
  \item the cofibrations are the monomorphisms;
  \item the fibrations are those maps with the right lifting property against all pushout products
    $\Gd_k \lbtimes m$ of an endpoint inclusion with a monomorphism.
  \end{itemize}
  We write $\CMSOr$ for this model category.
\end{theorem}
\begin{proof}
  By \cref{all-cofibrant-universe-to-model} applied with $\PSh[\Gk]{\Or}$ inside $\PSh{\Or}$ and the
  factorization systems $(\CM,\CF_t)$ and $(\CC_t,\CF)$ defined in this section. Clearly all objects are
  cofibrant, and every fibration in $\PSh[\Gk]{\Or}$ is classified by
  $p_\lfib \co \widetilde U_\lfib \to U_\lfib$, which is a fibration (\cref{uproj-fibration}) between fibrant
  objects (\cref{u-fibrant}).
\end{proof}

Our question now is whether $\CMSOr$ presents $\inftyGpd$. More narrowly, we can ask whether the
following comparison adjunction evinces a Quillen equivalence between $\CMSOr$ and $\KanSimp$.

\begin{definition}[Triangulation]
  \label{def:triangulation}
  Define $\triang \co \Or \to \PSh{\Simp}$ to be the functor sending the $n$-cube $[1]^n$ to the $n$-fold
  product $(\simp{1})^n$ of the $1$-simplex, with the evident functorial action. The \emph{triangulation}
  functor $\Triang \co \PSh{\Or} \to \PSh{\Simp}$ is the left Kan extension of $\triang$:
  \[
    \begin{tikzcd}
      {[1]^n} \ar[|->]{r}{\triang} & (\simp{1})^n \\[-1.5em]
      \Or \ar{d}[left]{\yo} \ar{r} & \PSh{\Simp} \rlap{.} \\
      \PSh{\Or} \ar[dashed]{ur}[below right]{\Triang}
    \end{tikzcd}
  \]
  Triangulation has a right adjoint, the nerve functor $\nerve{\triang} \co \PSh{\Simp} \to \PSh{\Or}$ defined
  by $\nerve{\triang}X \defeq \Hom{\PSh{\Simp}}{\triang-}{X}$.
\end{definition}

\subsection{Idempotent completion}
\label{sec:idempotent-completion}

Although the triangulation adjunction $\Triang \dashv \nerve{\triang}$ is the most immediate means of
comparing $\CMSOr$ and $\KanSimp$, it is not the most convenient. Ideally, we would like to have a comparison
on the level of the base categories, some functor $i \co \Simp \to \Or$ or vice versa, in which case we would
obtain an adjoint \emph{triple} $\lan{i} \dashv \subst{i} \dashv \ran{i}$ on their presheaf categories. This
is too much to hope for, but we \emph{can} define an embedding from $\Simp$ into the \emph{idempotent
  completion} of $\Or$, following the strategy used by \citeauthor{sattler19}~\cite{sattler19} and
\citeauthor{streicher21}~\cite{streicher21} to relate $\Simp$ and $\Ded$. The category of presheaves on any
category $\CatC$ is equivalent to the category of presheaves on its idempotent completion $\IdComp{\CatC}$,
the closure of $\CatC$ under splitting of idempotents \cite{borceux86}. We shall exhibit an embedding
$\insimp \co \Simp \to \IdCompOr$; by composing the triple
$\lan{\insimp} \dashv \subst{\insimp\!} \dashv \ran{\insimp}$ with the adjoint equivalence
$\adjunction{\subst{\incube}}{\PSh{\IdCompOr}}{\PSh{\Or}}{\lan{\incube}}$, we obtain a triple relating
$\PSh{\Simp}$ and $\PSh{\Or}$.

We then observe that $\Triang \cong \subst{\insimp\!}\lan{\incube}$ (\cref{middle-adjoint-is-triangulation});
thus the upshot of this detour is that $\Triang$ is also a right adjoint. It will, however, be easier to study
the adjunction $\lan{\insimp} \dashv \subst{\insimp\!}$ than $\Triang \dashv \nerve{\triang}$, in particular
because both $\lan{\insimp}$ and $\subst{\insimp\!}$ are left Quillen adjoints
(\cref{L-left-quillen,T-left-quillen}). We will first show in \cref{sec:equivalence-kan-quillen} that
$\lan{\insimp} \dashv \subst{\insimp\!}$ is a Quillen equivalence, then deduce formally that
$\subst{\insimp\!} \dashv \ran{\insimp}$ and $\Triang \dashv \nerve{\triang}$ are also Quillen equivalences.

\begin{definition}
  An \emph{idempotent} in a category $\CatC$ is a morphism $f \co A \to A$ such that $ff = f$.
  A \emph{splitting} for an idempotent is a section-retraction pair $(s,r)$ such that $f = sr$.
\end{definition}

The splitting of an idempotent is unique up to isomorphism if it exists: $s$ is the equalizer of the pair
$f,\id \co A \to A$, while $r$ is the coequalizer of the same.
We say that $\CatC$ is \emph{idempotent complete} if every idempotent splits.

\begin{definition}
  An \emph{idempotent completion} of a category $\CatC$ is a fully faithful functor
  $i \co \CatC \to \IdComp{\CatC}$ such that $\IdComp{\CatC}$ is idempotent complete
  and every object in $\IdComp{\CatC}$ is a retract of $iA$ for some $A \in \CatC$.
\end{definition}

Equivalently, an idempotent completion is a universal (in a bicategorical sense) fully faithful functor $\CatC \to \IdComp{\CatC}$ into an idempotent complete category.
We shall only need the following consequence of this characterization:

\begin{proposition}[{{essentially \thmcite[Theorem 1]{borceux86}}}]
  \label{idem-completion-equivalence}
  Given an idempotent completion $i \co \CatC \to \IdComp{\CatC}$, the induced substitution functor
  $i^* \co \PSh{\IdComp{\CatC}} \to \PSh{\CatC}$ is an equivalence of categories.
\qed
\end{proposition}

We can describe the idempotent completion of $\Or$ concretely as a full subcategory of $\SLat$.

\begin{definition}
  \label{def:idcompor}
  Write $\IdCompOr$ for the full subcategory of $\SLat$ consisting of finite inhabited distributive lattices.
  This subcategory contains all of $\Or$; we write $\incube \co \Or \to \IdCompOr$ for the inclusion.
\end{definition}

\begin{remark}
  Any finite inhabited lattice is bounded, with $\top$ and $\bot$ obtained as the join and meet of all elements
  respectively. Moreover, a finite lattice is distributive if and only if it is a Heyting algebra, \ie, supports
  an implication operator $\Rightarrow$. Note however that we do not require the morphisms of $\IdCompOr$ to
  preserve $\land$, $\bot$, $\top$, or $\Rightarrow$, only binary (\ie, non-empty finite) joins.
\end{remark}

We show that $\incube \co \Or \to \IdCompOr$ is an idempotent completion using the following observations of
\citeauthor{horn71}.

\begin{proposition}[{{\thmcite[Theorem 1.1]{horn71}}}]
  \label{semilattice-epi}
  A morphism in $\SLat$ is epic if and only it is surjective.
\qed
\end{proposition}

\begin{proposition}[{{\thmcite[Corollaries 2.9 and 5.4]{horn71}}}]
  \label{semilattice-injective-projective}
  Recall that an object in a category is \emph{injective} if maps into it extend along monomorphisms, and
  dually \emph{projective} if maps out of it lift along epimorphisms. A finite semilattice $A \in \SLat_\lfin$
  is
  \begin{itemize}
  \item injective if and only if $A$ is a distributive lattice;
  \item projective if and only if $1 \join A$ is a distributive lattice. \qed
  \end{itemize}
\end{proposition}

\begin{corollary}
  \label{idcompor-closed-under-retracts}
  $\IdCompOr$ is closed under retracts in $\SLat$.
\end{corollary}
\begin{proof}
  A retract of an inhabited finite semilattice is clearly inhabited and finite, and the class of injective
  objects is closed under retracts in any category.
\end{proof}

\begin{corollary}
\label{idcompor-closed-under-idempotent-splitting}
  $\IdCompOr$ is idempotent complete.
\end{corollary}
\begin{proof}
  Note that $\SLat$ is idempotent complete because it has limits.
  The claim follows from this using \cref{idcompor-closed-under-retracts}.
\end{proof}

\begin{lemma}
  \label{idcompor-retract-of-cube}
  Any $A \in \IdCompOr$ is a retract of $[1]^n$ for some $n \in \BN$.
\end{lemma}
\begin{proof}
  For any $A \in \IdCompOr$, we have a poset map $p \co 1 \join UA \to A$ sending $\bot$ to $\bot$ and
  $a \in UA$ to $a$. Per \cref{cube-free-on-poset}, this induces a surjective semilattice map
  $p^\dagger \co [1]^{UA} \to A$. This is epic by \cref{semilattice-epi}.
  As $A$ is distributive, so too is $1 \join A$, so $A$ is projective. Thus,
  the identity on $A$ factors through $p^\dagger$, exhibiting $A$ as a retract of $[1]^{UA}$.
\end{proof}

\begin{theorem}
  \label{or-idempotent-completion}
  $\incube \co \Or \to \IdCompOr$ is an idempotent completion.
\end{theorem}
\begin{proof}
  By \cref{idcompor-closed-under-idempotent-splitting,idcompor-retract-of-cube}.
\end{proof}

Recall from \cref{simplex-semilattice} that we have an embedding $\Simp \to \SLat$. The induced
lattice structure on a simplex is distributive, so this embedding factors through $\IdCompOr$.

\begin{notation}
  We write $\insimp \co \Simp \to \IdCompOr$ for the inclusion of the simplices among the finite
  inhabited distributive semilattices.
\end{notation}

We can now decompose the triangulation functor.

\begin{lemma}
  \label{middle-adjoint-is-triangulation}
  We have $\subst{\insimp\!}\lan{\incube} \cong \Triang \co \PSh{\Or} \to \PSh{\Simp}$.
\end{lemma}
\begin{proof}
  As both functors are left adjoints and thus cocontinuous, it suffices to exhibit a natural isomorphism
  between their restrictions to representables, i.e., show that $\subst{\insimp\!}{\yo}\incube \cong
  \triang$. Both $\subst{\insimp\!}{\yo}\incube$ and $\triang$ preserve products, and
  $\subst{\insimp\!} \yo \incube [1] \cong \simp{1} \cong \triang [1]$ by inspection.
\end{proof}

\subsection{Two Quillen adjunctions}
\label{sec:simp-cub-quillen-adjunction}

In light of the equivalence $\PSh{\IdCompOr} \simeq \PSh{\Or}$, it now suffices to compare $\KanSimp$ with the
induced model structure $\CMSIdComp$ on $\PSh{\IdCompOr}$, which again has monomorphisms for cofibrations and
fibrations generated by pushout products $\Gd_k \lbtimes m$. We begin by observing that both $\lan{\insimp}$
and $\subst{\insimp\!}$ are left Quillen adjoints.

\begin{lemma}
  \label{lan-insimp-preserves-monos}
  $\lan{\insimp}$ preserves monomorphisms.
\end{lemma}
\begin{proof}
  Write $\Simpaug$ for the augmented simplex category, the full subcategory of $\Pos$ consisting of the objects $[n]$ for $n \in \BN$ as well as $[-1] \defeq \emptyset$.
  Write $\IdCompOraug$ for the category of finite distributive semilattices, which similarly freely extends $\IdCompOr$ with an initial object (the empty semilattice).
  The inclusion $\insimp$ extends to an inclusion between the augmented categories:
  \[
    \begin{tikzcd}
      \Simp \ar{r}{\insimp} \ar{d}[left]{\iota} \pullback & \IdCompOr \ar{d}{\upsilon} \\
      \Simpaug \ar{r}[below]{\insimpaug} & \IdCompOraug \rlap{.}
    \end{tikzcd}
  \]
  The square is a pullback and the vertical maps are (discrete) Grothendieck opfibrations, so the square is exact in the sense that the canonical map $\subst{\upsilon} \lan{(\insimpaug)} \to \lan{\insimp} \subst{\iota}$ is invertible \cite[Proposition 5.2 and Corollary 3.1]{nlab:beck-chevalley_condition}.
  This is also straightforward to check directly: the functors are cocontinuous, so it suffices to check on representables, and $\subst{\iota}$ and $\subst{\upsilon}$ preserve all representables except for the initial representable, which they send to an initial object.
  Since $\iota$ is fully faithful, this gives $\lan{\insimp} \cong \lan{\insimp} \subst{\iota} \ran{\iota} \cong \subst{\upsilon} \lan{(\insimpaug)} \ran{\iota}$.
  Therefore, it suffices to prove that $\lan{(\insimpaug)}$ preserves monomorphisms.

  Just like in simplicial sets, the monomorphisms in augmented simplicial sets form the left class of a weak factorization system generated by boundary inclusions (of augmented simplices).
  As $\lan{(\insimpaug)}$ is a left adjoint, it therefore suffices to show that it sends boundary inclusions to monomorphisms.
  The boundary inclusion $\simpbd{n} \cof \simp{n}$ is the joint image of the non-identity face maps $\simp{k} \raising \simp{n}$.
  The joint image of a set of maps $(f_i : A_i \to B)_{i \in I}$ in any pretopos is computed as the coequalizer of
  \[
  \begin{tikzcd}
    \coprod_{i,j \in I} A_i \times_{B} A_j \ar[yshift=4pt]{r} \ar[yshift=-4pt]{r} & \coprod_{i \in I} A_i \rlap{.}
  \end{tikzcd}
  \]
  It therefore suffices to check that $\lan{(\insimpaug)}$ sends face maps to monomorphisms and preserves pullbacks of cospans whose legs are face maps.
  As face maps are monic, the latter condition implies the former.
  For the latter condition, as face maps go between representables and $\Simpaug$ has these pullbacks, it suffices to check that $\insimpaug$ preserves pullbacks of cospans whose legs are face maps.
  In fact $\insimpaug$ creates such pullbacks, as any subposet of a linear poset is again linear.
\end{proof}

The following statements can be phrased more generally at the level of cylinder objects in a model category.
They also have evident dual version in terms of path objects with fibrancy assumptions instead.

\begin{lemma}\label{homotopy-rel-weak-equivs}
In a cylindrical model category, let maps $f, g \co A \to X$ be related by a homotopy $h \co \Cyl{A} \to X$.
If $A$ is cofibrant, then $f$ is a weak equivalence exactly if $g$ is.
\end{lemma}

\begin{proof}
The top maps in the following diagram are trivial cofibrations because $A$ is cofibrant:
\[
  \begin{tikzcd}[column sep=huge]
    A \ar{dr}[below left]{f} \ar[tcof=below]{r}{\Gd_0 \otimes A} & \Cyl{A} \ar{d}{h} & \ar[tcof=below]{l}[above]{\Gd_1 \otimes A} A \ar{dl}{g} \rlap{.} \\
    & X
  \end{tikzcd}
\]
The claim follows using 2-out-of-3.
\end{proof}

In a cylindrical model category, a \emph{homotopy retract} is a pair of maps $s \co X \to Y$,
$r \co Y \to X$ equipped with a homotopy $h \co \Cyl{X} \to X$ from $rs$ to $\id_X$.

\begin{corollary}
  \label{homotopy-retracts-preserve-weakly-contractible}
  In a cylindrical model category, any cofibrant homotopy retract of a weakly contractible object is weakly
  contractible.
\end{corollary}
\begin{proof}
  Let a homotopy retract $s \co X \to Y$, $r \co Y \to X$, $h \co \Cyl{X} \to X$ from $rs$ to $\id_X$ be given
  with $X$ cofibrant and $Y$ weakly contractible.
  By \cref{homotopy-rel-weak-equivs}, $rs$ is a weak equivalence.
  Since $Y$ is weakly contractible, any endomorphism on $Y$ is a weak equivalence by 2-out-of-3.
  As the two binary sub-composites of the ternary composite $X \overset{s}\to Y \overset{r}\to X \overset{s}\to Y$ are weak equivalences, both $r$ and $s$ are weak equivalences by 2-out-of-6~\cite[Remark 2.1.3]{riehl14}.
\end{proof}

\begin{lemma}
  \label{left-quillen-from-kan-sset}
  Consider a model category $\CatM$ and a left adjoint $L \co \KanSimp \to \CatM$ that preserves cofibrations.
  Then $L$ is a left Quillen adjoint exactly if it sends representables to weakly contractible objects.
\end{lemma}

\begin{proof}
  For the non-trivial direction, assume that $L$ sends representables to weakly contractible objects.
  Given $n \ge 1$ and $I \subseteq [n]$, write $\simphorn{n}{I}$ for the union of the subobjects $d_i \co \simp{n-1} \mono \simp{n}$ over $i \in I$.
  We check by induction that $L$ sends $\simphorn{n}{I} \cof \simp{n}$ to a trivial cofibration for $n \in \BN$ and $\emptyset \subsetneq I \subsetneq [n]$.
  When $\deg{I} = 1$, $\simphorn{n}{I}$ is the representable $\simp{n-1}$, so the claim holds by assumption and 2-out-of-3.
  Otherwise, choose some $i \in I$.
  We have the following pushout square, which is preserved by $L$:
  \[
    \begin{tikzcd}
     \simphorn{n-1}{d^{-1}_i(I)} \ar[tail]{d} \ar{r} & \simphorn{n}{I\setminus\{i\}} \ar[tail]{d} \\
     \simp{n-1} \ar{r}[below]{d_i}  & \simphorn{n}{I} \pushout \rlap{.}
    \end{tikzcd}
  \]
  By induction hypothesis, $L$ sends the left vertical map to a trivial cofibration.
  As trivial cofibrations are closed under cobase change, $L$ then also sends the right vertical map to a trivial cofibration.
  By induction hypothesis, $L$ sends $\simphorn{n}{I\setminus\{i\}} \cof \simp{n}$ to a trivial cofibration.
  By 2-out-of-3, we conclude that $L$ sends $\simphorn{n}{I\setminus\{i\}} \cof \simp{n}$ to a trivial cofibration.
  For $I = [n]\setminus k$, we obtain that $L$ sends the horn inclusion $\simphorn{n}{k} \to \simp{n}$ to a trivial cofibration.
  This makes $L$ a left Quillen adjoint.
\end{proof}

  The combinatorics of the above proof have a conceptual explanation in terms of the pushout join in augmented simplicial sets, which produces boundary inclusions and horn inclusions starting from the maps $\emptyset \to 1$ and $\simp{-1} \to 1$.

\begin{corollary}[{{cf.\ \thmcite[Proposition 3.6]{sattler19}}}]
  \label{L-left-quillen}
  $\lan{\insimp}$ is a left Quillen adjoint $\KanSimp \to \CMSIdComp$.
\end{corollary}

\begin{proof}
  By \cref{lan-insimp-preserves-monos}, $\lan{\insimp}$ preserves monomorphisms.
  Using \cref{left-quillen-from-kan-sset}, it suffices to show that $\lan{\insimp}\simp{n} \cong \yo{[n]}$ is weakly contractible for $n \in \BN$.
  For this, we observe that $\yo{[n]}$ is a homotopy retract of 1 for each   $n \in \BN$ via the homotopy $(t,i) \mapsto (t \lor i) \co [1] \times [n] \to [n]$ and apply \cref{homotopy-retracts-preserve-weakly-contractible}.
\end{proof}

\begin{lemma}[{{cf.\ \thmcite[\S3.3]{sattler19}}}]
  \label{T-left-quillen}
  $\subst{\insimp\!}$ is a left Quillen adjoint $\CMSIdComp \to \KanSimp$.
\end{lemma}
\begin{proof}
  $\subst{\insimp\!}$ preserves monomorphisms because it is a right adjoint. As it is also a left adjoint, it
  also preserves pushout products, so
  $\subst{\insimp\!}(\Gd_k \lbtimes m) \cong \subst{\insimp\!}\Gd_k \lbtimes \subst{\insimp\!}m \cong d_{1-k}
  \lbtimes \subst{\insimp\!}m$ is a trivial cofibration for any $k \in \{0,1\}$ and
  $m \co A \rightarrowtail B$.
\end{proof}

We quickly see that $\lan{\insimp} \dashv \subst{\insimp\!}$ is a Quillen coreflection in the following
sense:

\begin{lemma}
  \label{unit-weq}
  The derived unit
  $X \overset{\Gh_X}\to \subst{\insimp\!}\lan{\insimp}X \to \subst{\insimp\!}((\lan{\insimp}X)^{\lfib})$ is valued
  in weak equivalences.
\end{lemma}
\begin{proof}
  It is equivalent to prove the unit $\Gh$ is valued in weak equivalences: any fibrant replacement map
  $\lan{\insimp}X \tcof (\lan{\insimp}X)^{\lfib}$ is a trivial cofibration, so is mapped to a trivial
  cofibration by the left Quillen adjoint $\subst{\insimp\!}$. But $\insimp$ is fully faithful, so the unit
  is valued in isomorphisms.
\end{proof}

\section{Relatively elegant Reedy categories}
\label{sec:relatively-elegant}

To show that the adjunction $\lan{\insimp} \dashv \subst{\insimp\!}$ defined in
\cref{sec:simp-cub-quillen-adjunction} is a Quillen equivalence, it remains to check that its counit is valued
in weak equivalences, that is, that $\Ge_X \co \lan{\insimp}\subst{\insimp\!}X \to X$ is a weak equivalence
for every fibrant $X \in \PSh{\IdCompOr}$. We noted earlier (\cref{saturated-elegant}) that for an elegant
Reedy category $\CatR$, we have a convenient set of objects---the automorphism quotients of
representables---that generate the whole of $\PSh{\CatR}$ upon saturation by monomorphisms. We will see later
on (\cref{saturated-nat-trans-weq}) that the class of $X \in \PSh{\IdCompOr}$ for which $\Ge_X$ is a weak
equivalence is saturated by monomorphisms, so if $\IdCompOr$ were an elegant Reedy category we would have a
line of attack. Unfortunately, this is not the case. Indeed, $\IdCompOr$ is not a Reedy category at all
(\cref{idcompor-not-reedy}).

We therefore require a generalization of elegant Reedy theory. We consider categories $\CatC$ equipped with a
fully faithful functor $i \co \CatC \to \CatR$ into a Reedy category $\CatR$ that has pushouts of lowering
spans and is \emph{elegant relative to $i$}: such that
$\nerve{i} \defeq \subst{i}\yo \co \CatR \to \PSh{\CatC}$ preserves lowering pushouts. In this case, the
objects of $\PSh{\CatC}$ are generated upon saturation by monos from the set of automorphism quotients of
objects in the image of $\nerve{i}$. When $i = \id$, we recover the original theorem for elegant Reedy
categories. In \cref{sec:algebra-elegant-core}, we shall see that $\IdCompOr$ embeds elegantly in the category
of inhabited finite semilattices.

In \cref{sec:cellular-presentation}, we review Reedy monomorphisms and the construction of cellular
presentations for maps between presheaves over a Reedy category. In \cref{sec:pre-elegance}, we narrow our
focus to what we call \emph{pre-elegant} Reedy categories, those having pushouts of lowering spans. The
Reedy monic presheaves are in this case characterized as those sending lowering pushouts to pullbacks.
This sets the stage for \cref{sec:relative-elegance}, where we define and study elegance relative to an
embedding $i \co \CatC \to \CatR$.

\subsection{Cellular presentations and Reedy monomorphisms}
\label{sec:cellular-presentation}

For the theory of cellular presentations of diagrams over Reedy categories, we follow
\citeauthor{riehl14b}~\cite{riehl14b,riehl17b}. Almost none of the content in this section is novel. For
simplicity, we restrict our attention throughout to presheaves, though much of the theory generalizes to
functors from a Reedy category into any category.

\subsubsection{Weighted colimits}

\citeauthor{riehl14b} observe that many arguments in Reedy category theory are naturally phrased in terms of
\emph{weighted (co)limits}. While more fundamental to enriched category theory, these can have a clarifying
role even in ordinary (\ie, $\Set$-enriched) category theory.

\begin{definition}
  Let $\CatE$ be a category. Let a functor $W \co \op{\CatC} \to \Set$ (the \emph{weight}) and a diagram
  $F \co \CatC \to \CatE$ be given. A \emph{weighted colimit} for this data is an object
  $\wcolim{\CatC}{W}{F} \in \CatE$, equipped with a natural transformation
  $W \to \Hom{\CatE}{F-}{\wcolim{\CatC}{W}{F}}$, such that for any $X \in \CatE$ the induced map
  \[
    \Hom{\CatE}{\textstyle\wcolim{\CatC}{W}{F}}{X} \to \Hom{\Func{\op{\CatC}}{\Set}}{W}{\Hom{\CatE}{F -}{X}}
  \]
  of sets is an isomorphism.
\end{definition}

Informally, the weight $W$ specifies how many ``copies'' of each object in the diagram $F$ to include in the
weighted colimit $\wcolim{\CatC}{W}{F}$.

\begin{example}
  \label{weighted-colim-and-ordinary-colim}
  The ordinary colimit of a diagram $F \co \CatC \to \CatE$ can be described as $\wcolim{\CatC}{1}{F}$, a
  colimit weighted by the terminal presheaf $1 \in \PSh{\CatC}$. Conversely, any weighted colimit
  $\wcolim{\CatC}{W}{F}$ admits a characterization as an ordinary colimit over the category of elements of
  $W$:
  \begin{align*}
    \textstyle\wcolim{\CatC}{W}{F} &\cong \colim \parens*{\catEl W \overset{\Gp}\longrightarrow \CatC \overset{F}\longrightarrow \CatE}.
  \end{align*}
  In particular, any cocomplete category has weighted colimits.
\end{example}

\begin{example}
  \label{weighted-colim-and-coend}
  Recall that a \emph{tensor} of a set $S \in \Set$ and object $X \in \CatE$ is an object $S \tens X$ such
  that morphisms $S \tens X \to Y$ correspond to objects $\Hom{\Set}{S}{\Hom{\CatE}{X}{Y}}$, \ie, families
  of morphisms $f_s \co X \to Y$ for $s \in S$. In ordinary category theory, this is simply the $S$-ary
  coproduct $\coprod_{s \in S} X$, so can be expressed as the weighted colimit $\wcolim{S}{1}{\GD X}$ of the
  constant diagram $\GD X \co S \to \CatE$. Alternatively, we can encode the tensor as the $S$-weighted
  colimit $\wcolim{\catTerm}{S}{X}$ of the diagram $X \co \catTerm \to \CatE$ over the terminal category. We
  can characterize any weighted colimit $\wcolim{\CatC}{W}{F}$ as a coend of tensors:
  \begin{align*}
    \wcolim{\CatC}{W}{F} &\cong \coend^{c \in \CatC} W_c \tens F^c \rlap{.}
  \end{align*}
\end{example}

We will always be working in cocomplete categories. For a given $\CatC$,
weighted colimits over $\CatC$ are then functorial in both the weight and the diagram, giving a bifunctor
$\wcolim{\CatC} \co \Func{\op{\CatC}}{\Set} \times \Func{\CatC}{\CatE} \to \CatE$.  This functoriality will be
an essential tool. In particular, we will often take a family of weighted colimits over a family of weights:

\begin{notation}
  Given a family of weights $W \co \CatD \times \op{\CatC} \to \Set$ and $F \co \CatC \to \CatE$, we write
  $\wcolim{\CatC}{W}{F} \co \CatD \to \CatE$ for the result of calculating the weighted colimit pointwise,
  that is $(\wcolim{\CatC}{W}{F})^d \defeq \wcolim{\CatC}{W^d}{F}$.
\end{notation}

\begin{remark}
  From the characterization in terms of ordinary colimits, it follows that weighted colimits in presheaf
  categories are computed pointwise. Thus for $W \co \op{\CatC} \to \Set$ and
  $F \co \CatC \times \op{\CatD} \to \Set$, we have
  $(\wcolim{\CatC}{W}{F})_d \cong \wcolim{\CatC}{W}{F_d}$, where on the left we regard $F$ as a functor
  $\CatC \to \PSh{\CatD}$.
\end{remark}

It follows quickly from the universal property defining weighted colimits that the bifunctor $\wcolim{\CatC}$
preserves colimits in both arguments. It is therefore determined by its behavior on representable weights,
which is simply characterized:

\begin{proposition}
  \label{weighted-colimit-representable}
  Naturally in $c \in \CatC$ and $X \co \CatC \to \CatE$, we have $\wcolim{\CatC}{\yo{c}}{X} \cong X^c$.
\qed
\end{proposition}

\begin{corollary}
  \label{weighted-colim-assoc}
  Naturally in $W \co \op{\CatD} \to \Set$, $V \co \CatD \times \op{\CatC} \to \Set$, and $F \co \CatC \to \CatE$,
  we have $\wcolim{\CatC}{(\wcolim{\CatD}{W}{V})}{F} \cong \wcolim{\CatD}{W}{(\wcolim{\CatC}{V}{F})}$.
\end{corollary}
\begin{proof}
  By cocontinuity, it suffices to check the case where $W$ is representable.
\end{proof}

\begin{notation}
  In this section, we use the notation $\yo{\CatC} \co \op{\CatC} \times \CatC \to \Set$ for the hom-bifunctor
  $\Hom{\CatC}{-}{-}$. Thus the representable functor for $c \in \CatC$, written $\yo{c}$ in our usual
  notation, may now be written as $\yo{\CatC}^c$, while we also have the co-representable
  $\yo{\CatC}_c \co \CatC \to \Set$. With our notation for parameterized weighted colimits,
  \cref{weighted-colimit-representable} then tells us that $\wcolim{\CatC}{\yo{\CatC}}{X} \cong X$ for any
  $X \in \PSh{\CatC}$. We have an analogous equation in the second argument:
  $\wcolim{\CatC}{X}{\yo{\CatC}} \cong X$.
\end{notation}

\subsubsection{Cellular presentations of presheaves}

A central theorem of Reedy theory is the existence of cellular presentations: when $\CatR$ is a Reedy
category, any $\CatR$-indexed diagram is a sequential colimit of maps that successively attach
cells of increasing degree. Likewise, any natural transformation between $\CatR$-indexed diagrams decomposes
as a transfinite composite of such maps. In the Riehl--Verity style, the intermediate objects and maps are
obtained by taking (Leibniz) weighted colimits of the input diagram. As
$X \cong \wcolim{\op{\CatR}}{\yo{\CatR}}{X}$ for any diagram $X$, one can exhibit a cellular presentation for $X$
by constructing a cellular presentation for $\yo{\CatR}$ and then applying the cocontinuous functor
$\wcolim{\op{\CatR}}{(-)}{X}$.

For the remainder of this section, we fix a Reedy category $\CatR$.

\begin{definition}
  For each $n \in \BN$, define $\inbdyd{}{\CatR} \co \sk{n}{\CatR} \mono \yo{\CatR}$ to be the subfunctor of arrows of
  degree less than $n$.
\end{definition}

\begin{definition}
  For any $n \in \BN$, write $\degcore{n}{\CatR}$ for the subcategory of $\CatR$ consisting of objects of
  degree $n$ and isomorphisms between them. We introduce the following notation for restrictions of
  $\yo{\CatR}$ where one argument or the other is required to have a given degree:
  \[
    \begin{tikzcd}[row sep=large]
      \degcore{n}{\CatR} \times \CatR \ar[dashed]{dr}[below left]{\yo_n\CatR} \ar[tail]{r} & \op{\CatR} \times \CatR \ar{d}[description]{\yo{\CatR}} & \op{\CatR} \times \degcore{n}{\CatR} \ar[tail]{l} \ar[dashed]{dl}{\yo^n\CatR} \\
      & \Set
    \end{tikzcd}
  \]
  We
  similarly introduce notation for the corresponding restrictions of the skeleton bifunctor
  $\sk{n}{\CatR} \co \op{\CatR} \times \CatR \to \Set$:
  \[
    \begin{tikzcd}[row sep=large]
      \degcore{n}{\CatR} \times \CatR \ar[dashed]{dr}[below left]{\bdyd{n}{\CatR}} \ar[tail]{r} & \op{\CatR} \times \CatR \ar{d}[description]{\sk{n}{\CatR}} & \op{\CatR} \times \degcore{n}{\CatR} \ar[tail]{l} \ar[dashed]{dl}{\bdyu{n}{\CatR}} \\
      & \Set
    \end{tikzcd}
  \]
  Finally, we write $\inbdyd{n}{\CatR} \co \bdyd{n}{\CatR} \mono \yo_n\CatR$ and
  $\inbdyu{n}{\CatR} \co \bdyu{n}{\CatR} \mono \yo^n\CatR$ for the restrictions of the inclusion
  $\inbdyd{}{\CatR} \co \sk{n}{\CatR} \mono \yo\CatR$.
\end{definition}

\begin{notation}
  For $r \in \CatR$ of degree $n$, we abbreviate $\bdyd{r}{\CatR} \defeq (\sk{n}{\CatR})_r \co \CatR \to \Set$
  and $\bdyu{r}{\CatR} \defeq (\sk{n}{\CatR})^r \co \op{\CatR} \to \Set$. Likewise, we write
  $\inbdyd{r}{\CatR} \defeq (\inbdyd{}{\CatR})_r \co \bdyd{r}{\CatR} \mono \yo{\CatR}_r$ and $\inbdyu{r}{\CatR} \defeq (\inbdyu{}{\CatR})^r \co \bdyu{r}{\CatR} \mono \yo{\CatR}^r$.
\end{notation}

\begin{definition}
  For any $f \co X \to Y$ in $\PSh{\CatR}$ and $n \in \BN$, the \emph{${<} n$-skeleton map} for $f$ is the Leibniz weighted colimit
  \[
    \lbwcolim{\op{\CatR}}{(\sk{n}{\CatR} \mono \yo\CatR)}{f}.
  \]
  We write $\sk{n}{f} \in \PSh{\CatR}$ for the domain of this map, which we call the \emph{${<}n$-skeleton} of
  $f$; its codomain is $Y$.
  For $Y \in \PSh{\CatR}$, we write $\sk{n}{Y}$ for the $n$-skeleton of the map $0 \mono Y$.
\end{definition}

Note that the ${<}0$-skeleton map is
$\lbwcolim{\op{\CatR}}{(0 \mono \yo\CatR)}{f} \cong \wcolim{\op{\CatR}}{\yo\CatR}{f} \cong f$.  For each
$m \le n \in \BN$, the inclusion $\sk{m}{\CatR} \mono \sk{n}{\CatR}$ induces a morphism
$\sk{m}{f} \to \sk{n}{f}$ by functoriality of weighted colimits, and the fact that $\yo{\CatR}$ is the union
of the subfunctors $\sk{n}{\CatR}$ implies that $Y \cong \colim_{n \in \BN} \sk{n}{f}$. Thus we have a natural
decomposition of $f$ as the transfinite composite $\sk{0}{f} \to \sk{1}{f} \to \sk{2}{f} \to \cdots$ where we
may compute $\sk{n}{f} \cong X \sqcup_{\sk{n}{X}} \sk{n}{Y}$. The chain of skeleta may be further decomposed in terms
of \emph{latching maps}:

\begin{definition}
  Given $f \co X \to Y$ in $\PSh{\CatR}$ and $r \in \CatR$, define the \emph{latching map
    $\latchmap{r}{f} \in \Set^\to$ for $f$ at $r$} by the Leibniz weighted colimit
  \[
    \latchmap{r}{f} \defeq \lbwcolim{\op{\CatR}}{\inbdyd{r}{\CatR}}{f} \rlap{.}
  \]
  The codomain of this map is $Y_r$; we write $\latchob{r}{f}$ for its domain and call this the \emph{latching
    object for $f$ at $r$}.
\end{definition}

We write $\latchmap{r}{Y}$ and $\latchob{r}{Y}$ for the latching map and object of $0 \mono Y$ at $r$. For
general $f \co X \to Y$, we can calculate that
$\latchob{r}{f} \cong X_r \sqcup_{\latchob{r}{X}} \latchob{r}{Y}$ and
$\latchmap{r}{f} \cong \copair{f_r}{\latchob{r}{f}}$. It is convenient to have notation for the collected
$\degcore{n}{\CatR}$-sets of latching maps at a given degree:

\begin{definition}
  Given $f \co X \to Y$ and $n \in \BN$, we define the \emph{$n$th latching map of $f$} by
  $\latchmap{n}{f} \defeq \lbwcolim{\op{\CatR}}{\inbdyd{n}{\CatR}}{f}$. We write
  $\latchob{n}{f} \in \PSh{\degcore{n}{\CatR}}$ for its domain and
  $\latchcod{n}{f} \in \PSh{\degcore{n}{\CatR}}$ for its codomain.
\end{definition}

These maps are assembled from the latching maps at the individual objects of degree $n$: we have
$(\latchmap{n}{f})_r \cong \latchmap{r}{f}$ for each $r \in \degcore{n}{\CatR}$.

We can now exhibit the maps between successive ${<}n$-skeleta as pushouts of Leibniz weighted colimits of
boundary inclusions and latching maps. The induced decomposition of a map $f$ into a sequential colimit of pushouts of basic maps is what we
mean by a \emph{cellular presentation} of $f$:

\begin{proposition}[{{\thmcite[Corollary 4.21]{riehl17b}}}]
  \label{cell-pushout-presheaf}
  For any $f \co X \to Y$ and $n \in \BN$, we have a pushout square of the following form:
  \[
    \begin{tikzcd}[column sep=huge]
      \bullet \ar{d} \ar{r}{\lbwcolim{\op{\degcore{n}{\CatR}}}{\inbdyu{n}{\CatR}}{\latchmap{n}{f}}} & \bullet \ar{d} \\
      \sk{n}f \ar{r} & \pushout \sk{n+1}f \rlap{.}
    \end{tikzcd}
  \]
  We refer to the maps $\lbwcolim{\op{\degcore{n}{\CatR}}}{\inbdyu{n}{\CatR}}{\latchmap{n}{f}}$ as \emph{cell
    maps}.
\end{proposition}
\begin{proof}
  By applying $\lbwcolim{\op{\CatR}}{(-)}{f}$ to a pushout square in $\op{\CatR} \times \CatR \to \Set$; see
  \cite[Theorem 4.15]{riehl17b}.
\end{proof}

\begin{corollary}
  \label{reedy-cell-complex}
  Every $f \co X \to Y$ in $\PSh{\CatR}$ has a cellular presentation by maps of the form
  $\lbwcolim{\op{\degcore{n}{\CatR}}}{\inbdyu{n}{\CatR}}{\latchmap{n}{f}}$.
\end{corollary}

For our purposes, namely working with properties saturated by monomorphisms, it is important to know when the
cell maps are monic.

\begin{definition}
  A map $f \co X \to Y$ in $\PSh{\CatR}$ is a \emph{Reedy monomorphism} when $\latchmap{r}{f}$ is
  monic in $\Set$ for all $r \in \CatR$.
\end{definition}

Here and in the following, we are specializing the theory of \emph{Reedy cofibrations} to the (mono, epi) weak
factorization system on $\Set$. To see when Reedy monomorphisms have monic cell maps, we use the
following lemma. Recall that a map is \emph{epi-projective} if it
has the left lifting property against all epimorphisms.

\begin{proposition}
  \label{weighted-colim-mono}
  Let $\CatC$ be a small category, $f \in \Func{\op{\CatC}}{\Set}^\to$, and $g \in
  \Func{\CatC}{\Set}^\to$.
  If $f$ is epi-projective and $g$ is monic, then $\lbwcolim{\CatC}{f}{g}$ is monic.
\end{proposition}
\begin{proof}
  By \cite[Lemma 3.13 and Corollary 3.17]{riehl17b} applied to the (mono, epi) weak factorization system on
  $\Set$.
\end{proof}

\begin{lemma}
  \label{boundary-projective-mono}
  If isos act freely on lowering maps in $\CatR$, then $\inbdyu{n}{\CatR}_r$ is epi-projective in $\degcore{n}{\CatR} \to \Set$.
\end{lemma}
\begin{proof}
A given morphism from $r$ to an object of degree $n$ is either a lowering map or has degree less than $n$.
This induces the following coproduct decomposition in $\degcore{n}{\CatR} \to \Set$:
\[
\begin{tikzcd}
  0
  \ar[r]
  \ar[d]
&
  \bdyu{n}{\CatR}_r
  \ar{d}{\inbdyu{n}{\CatR}_r}
\\
  \CatR^-(r, -)
  \ar[r]
&
  \yo^n\CatR_r
  \pushout
\rlap{.}
\end{tikzcd}
\]
Since epi-projective is the left class in a weak factorization system, it is stable under cobase change.
It thus suffices to show that $\CatR^-(r, -)$ is epi-projective.
Since isos act freely on $\CatR^-(r, -)$, it is the left Kan extension along some functor $A \to \degcore{n}{\CatR}$ of some $F \co A \to \Set$ with $A$ a set.
Recall that epimorphisms are characterized levelwise in $\Set$-valued functors.
By adjoint transposition, it thus suffices to show that $F$ is epi-projective.
Since $A$ is a set, this just means that $F$ is levelwise epi-projective.
And in $\Set$, every object is epi-projective.
\end{proof}

\begin{corollary}
  \label{reedy-mono-to-cell-mono}
  Suppose that isos act freely on lowering maps in $\CatR$. Given a Reedy monic $f \in \PSh{\CatR}^\to$, the map
  $\lbwcolim{\op{\degcore{n}{\CatR}}}{\inbdyu{n}{\CatR}}{\latchmap{n}{f}}$ is monic for all $n \in \BN$.
\end{corollary}
\begin{proof}
  We have
  $(\lbwcolim{\op{\degcore{n}{\CatR}}}{\inbdyu{n}{\CatR}}{\latchmap{n}{f}})_r =
  \lbwcolim{\op{\degcore{n}{\CatR}}}{\inbdyu{n}{\CatR}_r}{\latchmap{n}{f}}$ for every $r \in \CatR$. We know
  $\inbdyu{n}{\CatR}_r$ is epi-projective by \cref{boundary-projective-mono}, and $\latchmap{n}{f}$ is monic
  by assumption, so their Leibniz weighted colimit is monic by \cref{weighted-colim-mono}.
\end{proof}

\subsubsection{Eilenberg-Zilber decompositions}

The Reedy monomorphisms with initial domain can be characterized more simply: an object $X$ is Reedy monic exactly if every element of $X$ writes uniquely up to isomorphism as a degeneracy of a non-degenerate element of $X$.
We are not aware of a proof of this precise statement (\cref{cofibrant-iff-unique-non-degenerate}) in the literature,
though we would be surprised if it were unknown. We use Cisinski's term ``Eilenberg-Zilber decomposition''
\cite[Proposition 8.1.13]{cisinski06} for what Berger and Moerdijk call \emph{standard decompositions}.

\begin{definition}
  Let $X \in \PSh{\CatR}$. We say that $x \in X_r$ is \emph{non-degenerate} when every lowering map
  $e \co r \lowering s$ admitting an $x' \in X_s$ with $x'e = x$ is an isomorphism. An \emph{Eilenberg-Zilber
    (EZ) decomposition} of $x \in X_r$ is a pair $(e,x')$ where $x' \in X_s$ is non-degenerate,
  $e \co r \to s$ is a lowering map, and $x = x'e$. We regard two EZ decompositions $(e_0,x_0')$ and
  $(e_1,x_1')$ of $x$ as \emph{isomorphic} when there exists an isomorphism $\Gth \co s_0 \cong s_1$ in
  $\CatR$ such that $x_0'\Gth = x_1'$ and $e_0 = e_1\Gth$. We say $X$ has \emph{unique EZ decompositions} when
  any two EZ decompositions of any element of $X$ are isomorphic.
\end{definition}

\begin{remark}
  Every element of a presheaf admits at least one EZ decomposition: for any $x \in X_r$ there
  exists a minimal $n \in \BN$ such that $x$ factors though a lowering map to an object of degree $n$, and any
  such factorization is an EZ decomposition.
\end{remark}

\begin{proposition}[{{\thmcite[Observation 3.23]{riehl14b}}}]
  \label{latching-object-only-lowering-maps}
  Given $X \in \PSh{\CatR}$ and $r \in \CatR$, we have an isomorphism
  \[
    \begin{tikzcd}[row sep=tiny]
      \latchob{r}{X_-} \ar[dashed]{dd}[sloped]{\cong} \ar{dr}[pos=.2]{\latchmap{r}{X_-}} \\
      & X_r \rlap{,} \\
      \latchob{r}{X} \ar{ur}[below right,pos=.2]{\latchmap{r}{X}}
    \end{tikzcd}
  \]
  where $X_- \in \PSh{\CatR^-}$ is the restriction of $X$ along the Reedy category inclusion
  $\CatR^- \to \CatR$.
\qed
\end{proposition}

\begin{lemma}
  \label{cofibrant-iff-unique-non-degenerate}
  A presheaf $X \in \PSh{\CatR}$ is Reedy monic if and only if it has unique
  EZ decompositions.
\end{lemma}
\begin{proof}
  Suppose that $X$ is Reedy monic. We show that any two EZ decompositions of any $x \in X_r$ are
  isomorphic by induction on $\deg{r}$. Let two such factorizations $(e_0,x_0)$, $(e_1,x_1)$ be given. If
  either of $e_0$ or $e_1$ is an isomorphism, then the other must be as well, in which case the factorizations
  are trivially isomorphic; thus we can assume that each $e_k$ strictly decreases degree.  Then $(e_0,x_0)$
  and $(e_1,x_1)$ belong to $\latchob{r}{X_-}$; because $X$ is Reedy monic, they are moreover equal
  therein. By the concrete characterization of colimits in $\Set$, we have a finite sequence of lowering spans
  $s_i \overset{f_i}\leftarrow t_i \overset{f'_i} \rightarrow s_{i+1}$ for $0 \le i < n$, always with
  $\deg{s_i},\deg{t_i} < \deg{r}$, together with elements $y_i \co \yo{s_i} \to X$ for each $i \le n$, such
  that $y_0 = x_0$, $y_n = x_1$, and $y_if_i = y_{i+1}f'_i$:
  \[
    \begin{tikzcd}
      & & & \yo{r} \ar[lowering,to path={-| (\tikztotarget) [near start]\tikztonodes}, rounded corners]{ddlll}[above]{e_0} \ar[lowering]{dll} \ar[phantom]{d}{\vdots} \ar[lowering]{drr} \ar[lowering,to path={-| (\tikztotarget) [near start]\tikztonodes}, rounded corners]{ddrrr}{e_1} \\[-.2em]
      & \yo{t_0} \ar[lowering]{dl}[above left]{f_0} \ar[lowering]{dr}[above right]{f'_0} & & \ar[lowering]{dl}[above left]{f_1} \cdots \ar[lowering]{dr}[above right]{f'_{n-2}} & & \yo{t_{n-1}} \ar[lowering]{dl}[above left]{f_{n-1}} \ar[lowering]{dr}[above right]{f'_{n-1}} \\[-1.2em]
      \yo{s_0} \ar[to path={|- (\tikztotarget) [near end]\tikztonodes}, rounded corners]{drrr}[above]{x_0} & & \yo{s_1} \ar{dr}[below left]{y_1} & \vdots & \yo{s_{n-1}} \ar{dl}[below right]{y_{n-1}} & & \yo{s_n} \ar[to path={|- (\tikztotarget) [near end]\tikztonodes}, rounded corners]{dlll}[above]{x_1} \rlap{.} \\[-.2em]
      & & & X
    \end{tikzcd}
  \]
  By taking an EZ decomposition of each $y_i$ and absorbing the lowering map into $f'_i,f_{i+1} $,
  we can assume without loss of generality that each $y_i$ is non-degenerate. Then for each $i$, the equation
  $y_if_i = y_{i+1}f'_i$ makes $(y_i,f_i)$ and $(y_{i+1},f'_i)$ EZ decompositions of the
  same element of $X_{t_i}$. As $\deg{t_i} < \deg{r}$, it follows by induction hypothesis that they are
  isomorphic. Chaining these isomorphisms, we conclude that $(e_0,x_0)$ and $(e_1,x_1)$ are isomorphic.

  Now suppose conversely that $X$ has unique EZ decompositions. By \cref{latching-object-only-lowering-maps},
  it suffices to show the map $\latchob{r}{X_-} \to X_r$ is monic.  The elements of
  $\latchob{r}{X_-}$ are pairs $(e \co r \lowering s, x \in X_s)$ where $e$ is a strictly lowering map,
  quotiented by the relation $(fe,x) = (e,xf)$ for any $f \in \CatR^-$; the latching map sends $(e,x)$ to
  $xe \in X_r$. Let $(e_0,x_0),(e_1,x_1) \in L_rX_-$ be given such that $x_0e_0 = x_1e_1$. Without loss of
  generality, we may assume that these are EZ decompositions, in which case they are isomorphic and thus equal
  as elements of $\latchob{r}{X_-}$.
\end{proof}

\subsubsection{Saturation by monomorphisms}

Now we check that the class of Reedy monic presheaves is contained in the saturation by monos of the set
of automorphism quotients of representables, assuming isos act freely on lowering maps in $\CatR$.

\begin{lemma}
  \label{cell-is-groupoidal-colimit}
  For any $X \in \PSh{\degcore{n}{\CatR}}$, the presheaf $\wcolim{\op{\degcore{n}{\CatR}}}{\yo^n\CatR}{X}$ is a
  coproduct of automorphism quotients of representables.
\end{lemma}
\begin{proof}
  Write $\degcore{n}{\CatR}$ as a coproduct of groups $\degcore{n}{\CatR} \cong \coprod_{i} G_i$. Using
  the characterization of orbits as quotients by stabilizer groups, we may decompose $X$ as a coproduct of
  orbits $X \cong \coprod_{i,j} \autquo{\yo r_i}{H_{ij}}$, where $r_i \in \CatR$ is the
  point of $G_i$. By cocontinuity of $\wcolim{\op{\degcore{n}{\CatR}}}{\yo^n\CatR}{(-)}$, we then have
  \begin{align*}
    \wcolim{\op{\degcore{n}{\CatR}}}{\yo^n\CatR}{X}
    \cong \textstyle\coprod_{i,j} \autquo{(\wcolim{\op{\degcore{n}{\CatR}}}{\yo^n\CatR}{\yo{r_i}})}{H_{ij}}
    \cong  \textstyle\coprod_{i,j} \autquo{\yo{r_i}}{H_{ij}}
  \end{align*}
  as desired.
\end{proof}

\begin{lemma}
  \label{groupoidal-colim-reedy-monomorphic}
  Any colimit of a groupoid of representables in $\PSh{\CatR}$ is Reedy monic.
\end{lemma}
\begin{proof}
  Let a groupoid $\CatG$ and $d \co \CatG \to \CatR$ be given. Set $C \defeq \colim_{i \in \CatG} \yo{d^i}$.
  We show that $C$ has unique EZ decompositions. Let two EZ decompositions $(e_0,x_0)$ and $(e_1,x_1)$ of the
  same element of $C$ be given. As colimits are computed pointwise, each $x_k$ factors as $x_k= \Gi_km_k$
  through some leg $\Gi_k \co \yo{d^{i_k}} \to C$ of the coproduct and we have an arrow $g \co i_0 \cong i_1$
  in $\CatG$ making the following diagram commute:
  \[
    \begin{tikzcd}[column sep=huge, row sep=tiny]
      & \yo{s_0} \ar[bend left=25]{drr}[above right]{x_0} \ar[raising=above,dashed]{r}[below]{m_0} & \yo{d^{i_0}} \ar[dashed]{dd}[left]{d^g}[sloped]{\cong} \ar[dashed]{dr}[description, near start]{\Gi_0} \\
      \yo{r} \ar[lowering]{ur}{e_0} \ar[lowering=above]{dr}[below left]{e_1} & & &[2em] C \rlap{.} \\
      & \yo{s_1} \ar[bend right=25]{urr}[below right]{x_1} \ar[raising,dashed]{r}{m_1} & \yo{d^{i_1}} \ar[dashed]{ur}[description, near start]{\Gi_1}
    \end{tikzcd}
  \]
  Each $m_k$ must be a raising map because $x_k$ is non-degenerate. By uniqueness of Reedy factorizations, we
  have an isomorphism $\Gth \co s_0 \cong s_1$ fitting in the diagram above.
\end{proof}

\begin{theorem}
  \label{saturated-reedy-cofibrant}
  Let $\CatR$ be a Reedy category in which isos act freely on lowering maps. Let $\CP \subseteq \PSh{\CatR}$
  be a class of objects such that
  \begin{itemize}
  \item for any $r \in \CatR$ and $H \le \Aut[\CatR]{r}$, we have $\autquo{\yo{r}}{H} \in \CP$;
  \item $\CP$ is saturated by monomorphisms.
  \end{itemize}
  Then $\CP$ contains every Reedy monic presheaf.
\end{theorem}
\begin{proof}
  First we show by induction on $n$ that $\sk{n}X \in \CP$ for any Reedy monic presheaf $X$.  It then
  follows that $X \cong \colim_{n \in \BN} \sk{n}X \in \CP$ by saturation.

  In the base case, $\sk{0}X$ is the empty coproduct and thus belongs to $\CP$ by saturation.  For any
  $n \in \BN$, we have the following pushout square by \cref{cell-pushout-presheaf}:
  \[
    \begin{tikzcd}[column sep=huge]
      \bullet \ar{d} \ar[tail]{r}{\lbwcolim{\op{\degcore{n}{\CatR}}}{\inbdyu{n}{\CatR}}{\latchmap{n}{X}}} & \bullet \ar{d} \\
      \sk{n}X \ar[tail]{r} & \pushout \sk{n+1}X \rlap{.}
    \end{tikzcd}
  \]
  The upper horizontal map is monic by \cref{reedy-mono-to-cell-mono}, the lower by closure of monos in
  $\PSh{\CatR}$ under cobase change. We have $\sk{n}X \in \CP$ by induction hypothesis. The upper-right corner is
  $\wcolim{\op{\degcore{n}{\CatR}}}{\yo^n\CatR}{\latchcod{n}{X}}$, which belongs to $\CP$ by
  \cref{cell-is-groupoidal-colimit}. Finally, the upper-left corner is by definition the following pushout
  object:
  \[
    \begin{tikzcd}
      \wcolim{\op{\degcore{n}{\CatR}}}{\bdyu{n}{\CatR}}{\latchob{n}{X}} \ar{d} \ar[tail]{r} & \wcolim{\op{\degcore{n}{\CatR}}}{\yo^n\CatR}{\latchob{n}{X}} \ar{d} \\
      \wcolim{\op{\degcore{n}{\CatR}}}{\bdyu{n}{\CatR}}{\latchcod{n}{X}} \ar[tail]{r} & \pushout \bullet \rlap{~.}
    \end{tikzcd}
  \]
  The upper horizontal map is monic by \cref{weighted-colim-mono,boundary-projective-mono}, as we can write
  it as the pushout product
  $\lbwcolim{\op{\degcore{n}{\CatR}}}{\inbdyu{n}{\CatR}}{(\emptyset \mono \latchob{n}{X})}$.  The object
  $\wcolim{\op{\degcore{n}{\CatR}}}{\yo^n\CatR}{\latchob{n}{X}}$ is in $\CP$ by
  \cref{cell-is-groupoidal-colimit}. Using \cref{weighted-colim-assoc}, we have
  \begin{align*}
    \wcolim{\op{\degcore{n}{\CatR}}}{\bdyu{n}{\CatR}}{F}
    \cong \wcolim{\op{\degcore{n}{\CatR}}}{(\wcolim{\op{\CatR}}{\sk{n}{\CatR}}{\yo^n\CatR})}{F}
    \cong \wcolim{\op{\CatR}}{\sk{n}\CatR}{(\wcolim{\op{\degcore{n}{\CatR}}}{\yo^n\CatR}{F})}
  \end{align*}
  for any $F$. The objects $\wcolim{\op{\degcore{n}{\CatR}}}{\bdyu{n}{\CatR}}{\latchob{n}{X}}$ and
  $\wcolim{\op{\degcore{n}{\CatR}}}{\bdyu{n}{\CatR}}{\latchcod{n}{X}}$ thus belong to $\CP$ by
  \cref{cell-is-groupoidal-colimit,groupoidal-colim-reedy-monomorphic} and the induction hypothesis. By
  saturation, the upper-left corner of our original pushout diagram now belongs to $\CP$. For the same reason,
  we conclude that $\sk{n+1}X$ belongs to $\CP$.
\end{proof}

\subsection{Pre-elegant Reedy categories}
\label{sec:pre-elegance}

We next consider the subclass of Reedy categories in which any span of lowering maps has a pushout. This
restriction has some simplifying consequences (\eg, that all lowering maps are epic), and we can
characterize the Reedy monic presheaves over such categories as those preserving lowering pushouts.

\begin{definition}
  \label{def:pre-elegant}
  A Reedy category is \emph{pre-elegant} when it has pushouts of lowering spans.
\end{definition}

Intuitively, this means that any pair of lowering maps from the same object has a universal combination, the
diagonal of their pushout. Of course, any elegant Reedy category is pre-elegant, so $\Simp$ is one
example. Our motivating example is the (surjective, mono) Reedy structure on the category of finite inhabited
semilattices, which is pre-elegant but not elegant. In \cref{sec:algebra-elegant-core}, we see this is an
instance of a general class of examples: the (surjective, mono) Reedy structure on the category
$\Alg{\CatT}_\lfin$ of finite algebras for a Lawvere theory $\CatT$ is always pre-elegant, but not necessarily
elegant.

The following lemma generalizes the fact that any lowering map in an elegant Reedy category is split epic,
with essentially the same proof as Bergner and Rezk's Proposition 3.8(3) \cite{bergner13}.

\begin{lemma}
  \label{pre-elegant-lowering-epi}
  Let $\CatR$ be a pre-elegant Reedy category.
  Then any lowering map is epic.
\end{lemma}
\begin{proof}
  Consider a lowering map $e \co r \lowering s$. We take the pushout of $e$ with itself, then
  use its universal property to see that the legs of the pushout are split monic:
  \[
    \begin{tikzcd}
      r \ar[lowering=above]{d}[left]{e} \ar[lowering]{r}{e} & s \ar[dashed,lowering={near start}]{d}{f_1} \ar[bend left]{ddr}{\id} \\
      s \ar[bend right]{drr}[below left]{\id} \ar[dashed,lowering={above}]{r}[below]{f_0} & t \pushout \ar[dashed]{dr} \\
      & & s \rlap{.}
    \end{tikzcd}
  \]
  Any split mono is a raising map (\cref{split-epi-is-lowering}), so $f_0,f_1$ are isomorphisms.
  Thus $e$ is epic.
\end{proof}

\begin{corollary}
  If $\CatR$ is a pre-elegant Reedy category, then isos act freely on lowering maps in $\CatR$.
\qed
\end{corollary}

\begin{lemma}
  \label{pre-elegant-unique-non-degenerate-to-pushout}
  Let $\CatR$ be a Reedy category in which isos act freely on lowering maps. If $X \in \PSh{\CatR}$ is Reedy
  monic, then $X$ sends pushouts of lowering spans (should they exist) to pullbacks.
\end{lemma}
\begin{proof}
  Let a pushout square of lowering maps be given like so:
  \[
    \begin{tikzcd}
      r \ar[lowering=above]{d}[left]{e_0} \ar[lowering]{r}{e_1} & s_1 \ar[lowering={near start}]{d}{f_1} \\
      s_0 \ar[lowering=above]{r}[below]{f_0} & \pushout t \rlap{.}
    \end{tikzcd}
  \]
  Suppose we have $x_0 \in X_{s_0}$ and $x_1 \in X_{s_1}$ such that $x_0e_0 = x_1e_1$; we show this data
  determines a unique element of $X_t$ restricting to $x_0$ and $x_1$. For each $k \in \{0,1\}$, take an EZ
  decomposition $(g_k,y_k)$ of $x_k$. Then $(g_0e_0,y_0)$ and $(g_1e_1,y_1)$ are EZ decompositions of the same
  map, so by \cref{cofibrant-iff-unique-non-degenerate} they are isomorphic via some $\Gth \co u_0 \cong
  u_1$. The universal property of the pushout in $\CatR$ then provides a map $h_1 \co t \to u_1$ like so:
  \[
    \begin{tikzcd}
      & \yo{s_1} \ar[lowering]{dr}{f_1} \ar[lowering]{rr}{g_1} & & \yo{u_1} \ar{dr}{y_1} \\
      \yo{r} \ar[lowering=above]{dr}[below left]{e_0} \ar[lowering]{ur}{e_1} & & \yo{t} \ar[dashed]{ur}{h_1} & & X \\
      & \yo{s_0} \ar[lowering=above]{ur}[below right]{f_0} \ar[lowering=above]{rr}[below]{g_0} & & \yo{u_0} \ar{uu}[sloped]{\cong}[right]{\Gth} \ar{ur}[below right]{y_0}
    \end{tikzcd}
  \]
  This gives our desired element $y_1h_1 \in X_t$ restricting to $x_k$ along each $f_k$. Note that $h_1$ is a
  lowering map by \cref{reedy-cancellation}.

  To see that this element is unique, suppose we have $x \in X_t$ such that $xf_k = x_k$ for $k \in
  \{0,1\}$. Take an EZ decomposition $(h,y)$ of $X$, say through $u \in \CatR$. By uniqueness of EZ
  decompositions, we have isomorphisms $\Gps_k$ as shown:
  \[
    \begin{tikzcd}
      & \yo{s_1} \ar[lowering]{dr}{f_1} \ar[lowering]{rr}{g_1} & & \yo{u_1} \ar{dr}{y_1} \\
      \yo{r} \ar[lowering=above]{dr}[below left]{e_0} \ar[lowering]{ur}{e_1} & & \yo{t} \ar[lowering]{r}{h} & \yo{u}  \ar[dashed,<-]{u}[sloped]{\cong}[right]{\Gps_1} \ar{r}[description]{y} & X \\
      & \yo{s_0} \ar[lowering=above]{ur}[below right]{f_0} \ar[lowering=above]{rr}[below]{g_0} & & \yo{u_0} \ar[dashed]{u}[sloped]{\cong}[right]{\Gps_0} \ar{ur}[below right]{y_0}
    \end{tikzcd}
  \]
  Because isos act freely on lowering maps, we have $\Gps_1^{-1}\Gps_0 = \Gth$. It follows from the universal
  property of the pushout in $\CatR$ that $\Gps_1h = h_1$, thus that $yh = y_1h_1$ as desired.
\end{proof}

\begin{theorem}
  \label{pre-elegant-cofibrant}
  If $\CatR$ is a pre-elegant Reedy category, then $X \in \PSh{\CatR}$ is Reedy monic if and only if it
  sends pushouts of lowering spans to pullbacks.
\end{theorem}
\begin{proof}
  One direction is \cref{pre-elegant-unique-non-degenerate-to-pushout}. For the other, suppose $X$ sends
  pushouts of lowering spans to pullbacks. By \cref{cofibrant-iff-unique-non-degenerate}, it suffices to show
  $X$ has unique EZ decompositions. Let $(e_0,x_0)$ and $(e_1,x_1)$ be EZ decompositions of the same
  element. We have an induced element as shown:
  \[
    \begin{tikzcd}
      \yo{r} \ar[lowering=above]{d}[left]{e_0} \ar[lowering]{r}{e_1} & \yo{s_1} \ar[lowering]{d}{\inr} \ar[bend left]{ddr}{x_1} \\
      \yo{s_0} \ar[bend right=20]{drr}[below left,near start]{x_0} \ar[lowering=above]{r}[below]{\inl} & \yo{(s_0 \sqcup_r s_1)} \ar[dashed]{dr} \\
      & & X \rlap{.}
    \end{tikzcd}
  \]
  By non-degeneracy of $x_0$ and $x_1$, the maps $\inl$ and $\inr$ must be isomorphisms, so $(e_0,x_0)$ and
  $(e_1,x_1)$ are isomorphic.
\end{proof}

\begin{remark}
  A corollary of the previous theorem is that a pre-elegant Reedy category $\CatR$ is elegant if and only if
  all presheaves on $\CatR$ are Reedy monic. \citeauthor*{bergner13}~\cite[Proposition 3.8]{bergner13}
  show that this bi-implication actually holds for \emph{any} Reedy category. That is, if all presheaves on
  $\CatR$ are Reedy monic, then $\CatR$ is necessarily pre-elegant (and thus elegant).
\end{remark}

\subsection{Relative elegance}
\label{sec:relative-elegance}

Now we come to our central definition, elegance of a category \emph{relative to} a full subcategory.

\begin{definition}
  We say that a pre-elegant Reedy category $\CatR$ is \emph{elegant relative to} a fully faithful functor
  $i \co \CatC \to \CatR$ if the nerve $\nerve{i} \defeq \subst{i}\yo \co \CatR \to \PSh{\CatC}$ preserves
  pushouts of lowering spans. We also say that $i$ \emph{is relatively elegant} with the same meaning.
\end{definition}

\begin{remark}
  \label{relative-elegance-pointwise}
  As pushouts in $\PSh{\CatC}$ are computed pointwise, $i$ is relatively elegant if and only if
  $\Hom{\CatR}{ia}{-} \co \CatR \to \Set$ preserves lowering pushouts for all $a \in \CatC$.
\end{remark}

\begin{remark}
  A Reedy category is elegant if and only if it is elegant relative to the identity functor, in which case the
  nerve is simply the Yoneda embedding. At the other extreme, any pre-elegant Reedy category is elegant
  relative to the unique functor $\catInit \to \CatR$.
\end{remark}

\begin{lemma}
  \label{elegant-nerve-lowering-to-epi}
  If $\CatR$ is elegant relative to $i \co \CatC \to \CatR$, then $\nerve{i} \co \CatR \to \PSh{\CatC}$ sends
  lowering maps to epimorphisms.
\end{lemma}
\begin{proof}
  By \cref{pre-elegant-lowering-epi}, any $e \in \CatR^-$ fits in the pushout square
  \[
    \begin{tikzcd}
      r \ar[lowering=above]{d}[left]{e} \ar[lowering]{r}{e} & s \ar{d}{\id} \\
      s \ar{r}[below]{\id} & \pushout s \rlap{.}
    \end{tikzcd}
  \]
  which is then preserved by $\nerve{i}$.
\end{proof}

\begin{corollary}
  \label{relative-elegant-projective}
  If $\CatR$ is elegant relative to $i \co \CatC \to \CatR$, then objects in the image of $i$ are
  $\CatR^-$-projective: given a lowering map $e \co r \lowering s$ and $f \co ia \to s$, there exists a lift
  as below.
  \[
    \begin{tikzcd}
      & r \ar[lowering]{d}{e} \\
      ia \ar[dashed]{ur} \ar{r}[below]{f} & s
    \end{tikzcd}
  \]
\end{corollary}
\begin{proof}
  By \cref{elegant-nerve-lowering-to-epi}, $\nerve{i} e \co \nerve{i}r \to \nerve{i}s$ is epic; this
  means exactly post-composition with $e$ is a surjective map $\Hom{\CatR}{ia}{r} \to \Hom{\CatR}{ia}{s}$.
\end{proof}

\begin{remark}
  \label{elegant-lowering-split-epi}
  As a special case of the corollary above, we recover the fact that lowering maps in elegant Reedy categories
  are split epimorphisms \cite[Proposition 3.8]{bergner13}. Split epis are lowering maps in any Reedy category
  (\cref{split-epi-is-lowering}), so in the elegant case they coincide. It is not generally the case that
  the lowering maps in a Reedy category $\CatR$ elegant relative to some $i$ are exactly those sent to
  epimorphisms by $\nerve{i}$: consider that $\CatR$ is always elegant relative to $\mathbf{0} \to \CatR$.
\end{remark}

On the basis of \cref{relative-elegance-pointwise}, we can identify the maximal subcategory relative to which
a pre-elegant Reedy category $\CatR$ is elegant.

\begin{definition}
  Let $\CatR$ be a pre-elegant Reedy category. We define its \emph{elegant core}
  $\elcore{\CatR}$ to be the full subcategory of $\CatR$ consisting of objects $r$ such
  that $\Hom{\CatR}{r}{-}$ preserves lowering pushouts.
\end{definition}

\begin{proposition}
  An fully faithful functor $i \co \CatC \to \CatR$ into a pre-elegant Reedy category is relatively elegant
  exactly if it factors through the inclusion $\elcore{\CatR} \to \CatR$.  \qed
\end{proposition}

We can give another characterization of relative elegance in terms of the right Kan extension
$\ran{i} \co \PSh{\CatC} \to \PSh{\CatR}$:

\begin{lemma}
  \label{ran-elegant}
  Let $\CatR$ be a pre-elegant Reedy category. Then $i \co \CatC \to \CatR$ is relatively elegant if and only
  if $\ran{i} X \in \PSh{\CatR}$ is Reedy monic for every $X \in \PSh{\CatC}$.
\end{lemma}
\begin{proof}
  By definition, $i \co \CatC \to \CatR$ is relatively elegant exactly if $\nerve{i} = \subst{i} \yo{}$ preserves lowering pushouts.
  Testing pushouts by mapping out of them, this holds exactly if $\Hom{\PSh{\CatC}}{\subst{i} \yo{-}}{X}$ sends lowering pushouts to pullbacks for every $X \in \PSh{\CatC}$.
  Using the natural isomorphism
    \[\Hom{\PSh{\CatC}}{\subst{i}\yo{-}}{X} \cong \Hom{\PSh{\CatR}}{\yo{-}}{\ran{i}{X}} \cong \ran{i}X \rlap{,}\]
  this rewrites to $\ran{i}X$ sending lowering pushouts to pullbacks.
\end{proof}

This property of presheaves extends to morphisms as follows.

\begin{definition}\label{reflect-degeneracy}
A map $m \co X \to Y$ in $\PSh{\CatR}$ \emph{reflects degeneracy} if has the right lifting property against lowering maps $e \co \yo{r} \lowering \yo{s}$.
\end{definition}

This means that for any $x \in X_r$, if $m_r(x)$ factors through some $e \co \yo r \lowering \yo s$, then $x$ also factors through $e$.

\begin{lemma}
  \label{reflects-degeneracy-to-reedy-monomorphic}
  Let $\CatR$ be a Reedy category, let $Y \in \PSh{\CatR}$ be Reedy monic, and let $m \co X \mono
  Y$ be a degeneracy-reflecting monomorphism. Then $m$ is Reedy monic.
\end{lemma}
\begin{proof}
By \cref{latching-object-only-lowering-maps}, it suffices to show, for any $r \in \CatR$, that the pushout gap map in the naturality square
\[
\begin{tikzcd}[column sep=small]
\latchob{r}{X_-}
\ar[r]
\ar[d]
&
X_r
\ar[d]
\\
\latchob{r}{Y_-}
\ar[r]
&
Y_r
\end{tikzcd}
\]
is monic.
The bottom and right maps are monic by assumption.
Because $m$ reflects degeneracy, the square is a weak pullback, \ie, the pullback gap map is surjective.
This means that the pushout gap map, seen as an object over $Y_r$, is the union of the subobjects given by the bottom and right maps.
\end{proof}

\begin{corollary}
  \label{ran-elegant-morphisms}
  If $i \co \CatC \to \CatR$ is relatively elegant, then $\ran{i} m$ is Reedy monic for every
  $m \co X \mono Y$ in $\PSh{\CatC}$.
\end{corollary}
\begin{proof}
By \cref{reflects-degeneracy-to-reedy-monomorphic}, it suffices to show that $\ran{i}m$ reflects degeneracy.
For any $e \co r \lowering s$,
$\nerve{i} e$ is epic by \cref{elegant-nerve-lowering-to-epi}, so has
left lifting against monos.  By transposition, $e$ has left lifting against $\ran{i}m$.
\end{proof}

In any presheaf category, all monomorphisms can be presented as cell complexes (transfinite composites of
cobase changes of coproducts) of monomorphisms whose codomains are quotients of representables
\cite[Proposition 1.2.27]{cisinski06}. With \cref{ran-elegant-morphisms}, we can give an alternative---not
necessarily comparable---set of generators in terms of the boundary inclusions in $\CatR$.

\begin{theorem}
  If $i \co \CatC \to \CatR$ is relatively elegant, then every monomorphism in $\PSh{\CatC}$ is a cell complex
  of maps of the form $\subst{i}(\wcolim{\op{\degcore{n}{\CatR}}}{\inbdyu{n}{\CatR}}{(\autquo{\yo{r}}{H})})$
  where $r \in \CatR$ and $H \le \Aut[\CatR]{r}$.
\end{theorem}
\begin{proof}
  Let $m \co X \mono Y$ in $\PSh{\CatC}$. By \cref{reedy-cell-complex}, $\ran{i} m$ has a cellular
  presentation by maps of the form
  $\lbwcolim{\op{\degcore{n}{\CatR}}}{\inbdyu{n}{\CatR}}{\latchmap{n}{(\ran{i}m)}}$; by
  \cref{ran-elegant-morphisms}, each $\latchmap{n}{(\ran{i}m)}$ is monic in
  $\PSh{\degcore{n}{\CatR}}$.  In $\PSh{\degcore{n}{\CatR}}$, any monomorphism is a cell complex of maps of
  the form $0 \mono \autquo{\yo{r}}{H}$ for some $r \in \degcore{n}{\CatR}$ and $H \le \Aut[\CatR]{r}$,
  because $\PSh{\degcore{n}{\CatR}}$ is Boolean and any $\degcore{n}{\CatR}$-set decomposes as a coproduct of
  orbits. By \cite[Lemma 5.7]{riehl14b}, it follows that $\ran{i}m$ is a cell complex of maps
  $\lbwcolim{\op{\degcore{n}{\CatR}}}{\inbdyu{n}{\CatR}}{(0 \mono \autquo{\yo{r}}{H})}$. Finally, $\subst{i}$ preserves colimits and thus cell complexes.
\end{proof}

Finally, we exploit the fact that $\subst{i}$ preserves the operations of saturation by monomorphisms to
transfer the induction principle on the Reedy monic presheaves of $\PSh{\CatR}$ given by
\cref{saturated-reedy-cofibrant} to $\PSh{\CatC}$.

\begin{theorem}
  \label{saturated-elegant-relative}
  Let $\CatR$ be elegant relative to $i \co \CatC \to \CatR$. Let $\CP \subseteq \PSh{\CatC}$ be a class of
  objects such that
  \begin{itemize}
  \item for any $r \in \CatR$ and $H \le \Aut[\CatR]{r}$, we have $\autquo{\nerve{i}{r}}{\nerve{i}{H}} \in \CP$;
  \item $\CP$ is saturated by monomorphisms.
  \end{itemize}
  Then $\CP$ contains every presheaf in $\PSh{\CatC}$.
\end{theorem}
\begin{proof}
  As a left and right adjoint, $\subst{i}$ preserves colimits and monomorphisms. The class
  $(\subst{i})^{-1}\CP$ of $X \in \PSh{\CatR}$ such that $\subst{i}X \in \CP$ is thus saturated by
  monomorphisms. By our first assumption and the fact that $\subst{i}$ preserves colimits, we have
  $\autquo{\yo{r}}{H} \in (\subst{i})^{-1}\CP$ for every $r \in \CatR$ and $H \le \Aut[\CatR]{r}$. By
  \cref{saturated-reedy-cofibrant,ran-elegant}, we thus have $\ran{i}X \in (\subst{i})^{-1}\CP$ for all
  $X \in \PSh{\CatC}$. Hence $X \cong \subst{i}\ran{i}X \in \CP$ for all $X \in \PSh{\CatC}$.
\end{proof}

\section{Reedy structures on categories of finite algebras}
\label{sec:algebra-elegant-core}

\subsection{Finite algebras}

Per \cref{sec:disjunctive-cubical-sets}, $\Or$ and its idempotent completion can be regarded as full
subcategories of the category $\SLatF$ of finite semilattices. Any category of finite algebras of a Lawvere theory carries a
natural Reedy structure: the degree of an object is its cardinality, and the lowering and raising maps are
given by the (surjective, mono) factorization system. Here we observe that this Reedy structure is
pre-elegant and characterize its elegant core in the case where free finitely-generated algebras are
finite. As a corollary, the embedding $\Or \to \SLatF$ and its restriction
$\Or \to \SLatFNE$ to inhabited algebras are relatively elegant.

For this section, we fix a Lawvere theory $\CatT$. We recall a few basic properties of its category of
algebras.

\begin{proposition}[{{\thmcite[Corollary 3.5]{adamek10}}}]
  A morphism $f$ in $\Alg{\CatT}$ is regular epic if and only $Uf$ is surjective.
\qed
\end{proposition}

\begin{proposition}[{{\thmcite[Corollary 3.7]{adamek10}}}]
  \label{algebra-factorization}
  Any morphism in $\Alg{\CatT}$ factors as a regular epi followed by a mono.
\qed
\end{proposition}

Write $\Alg{\CatT}_\lfin \to $ and $\Alg{\CatT}^\linh_\lfin$ for the full subcategories of $\Alg{\CatT}$
consisting of algebras with finite and finite inhabited underlying sets respectively.  When we write
$\Alg{\CatT}^{(\linh)}_\lfin$ below, the relevant statement or proof applies to both of these.

\begin{corollary}
  The (surjective, mono) factorization system restricts to a Reedy structure on $\Alg{\CatT}^{(\linh)}_\lfin$ with
  degree map given by cardinality.
\qed
\end{corollary}

As any category of algebras has limits and colimits~\cite[Proposition 1.21, Theorem
4.5]{adamek10}, $\Alg{\CatT}$ has in particular pushouts of spans of surjections.

\begin{corollary}
  The Reedy structure on $\Alg{\CatT}^{(\linh)}_\lfin$ is pre-elegant.
\end{corollary}
\begin{proof}
  The pushout of a span of surjections has cardinality bounded by those of the objects in the span, as
  surjections are left maps and thus closed under cobase change.
\end{proof}

Recall that the forgetful functor $U$ preserves limits. While $U$ does not generally preserve colimits, we can
show that it preserves pushouts of surjective spans using the technology of sifted colimits.

\begin{definition}
  A small category $\CatD$ is
  \begin{itemize}
  \item \emph{filtered} if $\colim_\CatD \co \Func{\CatD}{\Set} \to \Set$ commutes with finite limits;
  \item \emph{sifted} if $\colim_\CatD \co \Func{\CatD}{\Set} \to \Set$ commutes with finite products.
  \end{itemize}
  A \emph{filtered (sifted) colimit} is a colimit over a filtered (sifted) category.
\end{definition}

Recall that a \emph{reflexive coequalizer} is a coequalizer of maps $f_0,f_1 \co A \to B$ with a mutual
section, that is, some $d \co B \to A$ such that $f_0d = f_1d = \id$. Reflexive coequalizers are sifted (but
not filtered) colimits \cite[Remark 3.2]{adamek10}.

\begin{lemma}
  \label{lex-and-preserves-sifted-to-preserves-regular-epi-pushout}
  Let $F \co \CatC \to \CatD$ be a functor between regular categories preserving finite limits and sifted
  colimits.
  Then $F$ preserves pushouts of regular epi spans.
\end{lemma}
\begin{proof}
  Let a span $B_0 \overset{e_0}{\epi*} A \overset{e_1}{\epi} B_1$ in $\CatC$ be given. We compute the following
  reflexive coequalizer:
  \[
    \begin{tikzcd}
      A \times_{B_0} A \times_{B_1} A \ar[yshift=0.5ex]{r}{\Gp_0} \ar[yshift=-0.5ex]{r}[below]{\Gp_2} & A \ar[bend right=60]{l}[above]{\angles{\id,\id,\id}} \ar[two heads, dashed]{r}{e} & B
    \end{tikzcd}
  \]
  It is straightforward to check, using the characterizations of $e_0$, $e_1$ as the coequalizers of their
  kernel pairs, that we have induced maps $B_0 \epi B \epi* B_1$ exhibiting $B$ as the pushout of our span. As
  $F$ preserves the diagram above, it preserves this pushout.
\end{proof}

\begin{corollary}
  \label{forgetful-preserves-surjective-pushout}
  $U \co \Alg{\CatT} \to \Set$ preserves pushouts of surjective spans.
\end{corollary}
\begin{proof}
  $U$ preserves limits and sifted colimits \cite[Proposition 2.5]{adamek10}.
\end{proof}

We now assume that any $\CatT$-algebra free on a finite set has a finite underlying set. In this case, the
elegant core coincides with the class of \emph{perfectly presentable} (also called \emph{strongly finitely
  presentable}) algebras.

\begin{definition}[{{\thmcite[Definition 5.3]{adamek10}}}]
  An object $A$ of a category $\CatC$ is
  \begin{itemize}
  \item \emph{finitely presentable} if $\Hom{\CatC}{A}{-} \co \CatC \to \Set$ preserves filtered colimits;
  \item \emph{perfectly presentable} if $\Hom{\CatC}{A}{-} \co \CatC \to \Set$ preserves sifted colimits.
  \end{itemize}
\end{definition}

\begin{proposition}[{{\thmcite[Corollary 5.16 and Proposition 11.28]{adamek10}}}]
  \label{perfectly-presentable-characterization}
  Let $A \in \Alg{\CatT}$. The following are equivalent:
  \begin{itemize}
  \item $A$ is perfectly presentable;
  \item $A$ is finitely presentable and regular projective;
  \item $A$ is a retract of a finitely-generated free algebra. \qed
  \end{itemize}
\end{proposition}

\begin{theorem}
  \label{algebra-elegant-core}
  Suppose that every finitely-generated free algebra in $\Alg{\CatT}$ has a finite underlying set. Then the
  elegant core of $\Alg{\CatT}^{(\linh)}_\lfin$ is the subcategory of objects perfectly presentable in
  $\Alg{\CatT}$.
\end{theorem}
\begin{proof}
  Suppose $A \in \Alg{\CatT}^{(\linh)}_\lfin$ is in the elegant core of the Reedy structure. By assumption, the free
  algebra $FUA$ belongs to $\Alg{\CatT}^{(\linh)}_\lfin$, and the counit $\Ge_A \co FUA \to A$ is clearly surjective.
  Then by \cref{relative-elegant-projective}, we have a lift
  \[
    \begin{tikzcd}
      & FUA \ar{d}{\Ge_A} \\
      A \ar[dashed]{ur} \ar{r}[below]{\id} & A
    \end{tikzcd}
  \]
  exhibiting $A$ a retract of a free algebra. Thus $A$ is perfectly presentable. Conversely, if $A$ is
  perfectly presentable, then $\Hom{\Alg{\CatT}}{A}{-} \co \Alg{\CatT} \to \Set$ preserves
  finite limits and sifted colimits, so preserves pushouts of lowering spans by
  \cref{lex-and-preserves-sifted-to-preserves-regular-epi-pushout}.
\end{proof}

\subsection{Semilattice cubes}
\label{sec:idcomp-elegant-embedding}

Applying the preceding results, we have a (surjective, mono) Reedy structure on $\SLatFNE$. We can give a
concrete description of its elegant core.

\begin{lemma}
  A semilattice $A \in \SLatFNE$ is in the elegant core of the (surjective, mono) Reedy structure if and only
  if $1 \join A$ is a distributive lattice.
\end{lemma}
\begin{proof}
  By \cref{algebra-elegant-core}, the elegant core consists of the perfectly presentable objects in $\SLat$.
  By \cref{perfectly-presentable-characterization}, these are the finite regular projectives in $\SLat$.
  These are characterized as above by \cref{semilattice-epi,semilattice-injective-projective}.
\end{proof}

\begin{theorem}
  \label{alg-bot-elegant}
  The inclusion $i \co \IdCompOr \to\SLatFNE$ is relatively elegant.
\end{theorem}
\begin{proof}
  If $A \in \SLatFNE$ is a distributive lattice, then $1 \join A$ is a distributive lattice as well, so $A$ is
  in the elegant core of $\SLatFNE$.
\end{proof}

\begin{remark}
  The subcategory $\SLatF^\bot$ of $\SLatFNE$ consisting of finite semilattices with a minimum element is
  closed under Reedy factorizations and lowering pushouts, so $\IdCompOr \to \SLatF^\bot$ is also relatively
  elegant. This embedding gives a more parsimonious set of generators, but $\SLatFNE$ suffices for our
  purposes.
\end{remark}

\section{Equivalences and equalities}
\label{sec:equivalences}

\subsection{Equivalence with the Kan--Quillen model structure}
\label{sec:equivalence-kan-quillen}

Returning to the candidate Quillen equivalence $\lan{\insimp} \dashv \subst{\insimp\!}$, it remains to show
that its counit is valued in weak equivalences. We first note that the collection of those
$X \in \PSh{\IdCompOr}$ for which $\Ge_X \co \lan{\insimp}\subst{\insimp\!}X \to X$ is a weak equivalence is
saturated by monomorphisms.

\begin{proposition}[{{\thmcite[Remarque 1.1.13]{cisinski06}}}]
  \label{saturated-preimage}
  Let $F \co \CatE \to \CatF$ be a mono- and colimit-preserving functor between cocomplete
  categories. If $\CP \subseteq \CatF$ is saturated by monos, then the class $F^{-1}(\CP)$ of objects
  whose image by $F$ is in $\CP$ is saturated by monos.
\qed
\end{proposition}

\begin{proposition}
  \label{saturated-weq}
  If $\CatM$ has monos as cofibrations, then its class of weak equivalences is saturated by monos
  as a class of objects of $\CatM^\to$.
\end{proposition}
\begin{proof}
  This is proven by Cisinski \cite[Remarque 1.4.16]{cisinski06} for \emph{localizers}~\cite[D\'efinition
  1.4.1]{cisinski06}; the class of weak equivalences in a model category with monos as cofibrations is
  always a localizer.
\end{proof}

\begin{corollary}
  \label{saturated-nat-trans-weq}
  Let $\CatE$ be a cocomplete category, $\CatN$ be a model category with monos as cofibrations, and
  $F,G \co \CatE \to \CatN$ be mono- and colimit-preserving functors. For any natural transformation
  $h \co F \to G$, the class of objects $X \in \CatE$ such that $h_X \co FX \to GX$ is a weak equivalence is
  saturated by monos.
\end{corollary}
\begin{proof}
  By \cref{saturated-preimage,saturated-weq}, regarding $h$ as a functor $\CatE \to \CatN^\to$.
\end{proof}

In particular, any natural transformation $h \co F \to G$ of left Quillen adjoints $F,G \co \CatM \to \CatN$
between model categories with monos as cofibrations satisfies the hypotheses of
\cref{saturated-nat-trans-weq}. In light of this, we only need to check that $\Ge$ is a weak equivalence at
generating presheaves.

\begin{lemma}
  \label{alg-contractible}
  Let $A \in \SLatFNE$ and $H \le \Aut[\SLatFNE]{A}$ be given. Then $\autquo{\nerve{i}{A}}{\nerve{i}{H}}$
  is weakly contractible.
\end{lemma}
\begin{proof}
  Per \cref{homotopy-retracts-preserve-weakly-contractible}, it suffices to show that this object is a
  homotopy retract of $1$.  We have a semilattice morphism ${\uparrow} \co [1] \times A \to A$ sending
  $(0,a) \mapsto a$ and $(1,a) \mapsto \top$. Any automorphism $g \in H$ preserves maximum elements, so we
  have a diagram like so:
  \[
    \begin{tikzcd}
      {[1]} \times A \ar{d}[left]{[1] \times g} \ar{r}{\uparrow} & A \ar{d}{g} \\
      {[1]} \times A \ar{r}[below]{\uparrow} & A
    \end{tikzcd}
  \]
  We thus obtain a contracting homotopy
  $\ival \times (\autquo{\nerve{i}{A}}{\nerve{i}{H}}) \to (\autquo{\nerve{i}{A}}{\nerve{i}{H}})$, using that
  $\nerve{i}([1] \times A) \cong \ival \times \nerve{i}A$ and that $\ival \times (-)$ commutes with colimits.
\end{proof}

\begin{lemma}
  \label{counit-weq}
  The counit map $\Ge_X \co \lan{\insimp}\subst{\insimp\!}X \to X$ is a weak equivalence for every
  $X \in \PSh{\IdCompOr}$.
\end{lemma}
\begin{proof}
  Recall that both $\lan{\insimp}$ and $\subst{\insimp\!}$ are left Quillen
  (\cref{L-left-quillen,T-left-quillen}). By \cref{saturated-elegant-relative,saturated-nat-trans-weq}, it
  suffices to show that $\Ge_X \co \lan{\insimp}\subst{\insimp\!}X \to X$ is a weak equivalence whenever $X$
  is an automorphism quotient of an object in the image of $\nerve{i}$. In this case $X$ is weakly
  contractible by \cref{alg-contractible}. As $\lan{\insimp}\subst{\insimp\!}$ preserves the terminal object,
  it preserves weak contractibility by Ken Brown's lemma; thus $\lan{\insimp}\subst{\insimp\!}X$ is weakly
  contractible and so $\Ge_X$ is a weak equivalence by 2-out-of-3.
\end{proof}

\begin{theorem}
  \label{cms-left-quillen-equivalence}
  $\adjunction{\lan{\insimp}}{\KanSimp}{\CMSIdComp}{\subst{\insimp\!}}$ is a Quillen equivalence.
\end{theorem}
\begin{proof}
  By \cref{L-left-quillen,unit-weq,counit-weq}.
\end{proof}

\begin{corollary}
  \label{cms-right-quillen-equivalence}
  $\adjunction{\subst{\insimp\!}}{\CMSIdComp}{\KanSimp}{\ran{\insimp}}$ is a Quillen
  equivalence.
\end{corollary}
\begin{proof}
  Write $\Gh'$ and $\Ge'$ for the unit and counit of this adjunction. The counit is an isomorphism, so
  trivially valued in weak equivalences. To check the derived unit, let $X \in \PSh{\Or}$ and let
  $m \co \subst{\insimp\!}X \tcof (\subst{\insimp\!}X)^\lfib$ be a fibrant replacement. We have the following
  naturality square:
  \[
    \begin{tikzcd}[column sep=5em]
      \lan{\insimp}\subst{\insimp\!}X \ar[weq=above]{d}[left]{\Ge_X} \ar{r}{\lan{\insimp}\subst{\insimp\!}\Gh'_X}[below]{\cong} & \lan{\insimp}\subst{\insimp\!}\ran{\insimp}\subst{\insimp\!}X  \ar[tcof=below]{r}{\lan{\insimp}\subst{\insimp\!}\ran{\insimp}m} & \lan{\insimp}\subst{\insimp\!}\ran{\insimp}(\subst{\insimp\!}X)^\lfib \ar[weq=below]{d}{\Ge_{\ran{\insimp}(\subst{\insimp\!} X)^\lfib}} \\
      X \ar{r}[below]{\Gh'_X} & \ran{\insimp}\subst{\insimp\!}X \ar{r}[below]{\ran{\insimp}m} & \ran{\insimp}(\subst{\insimp\!} X)^\lfib \rlap{.}
    \end{tikzcd}
  \]
  It follows by 2-out-of-3 that the bottom composite is a weak equivalence.
\end{proof}

\begin{theorem}
  \label{triangulation-quillen-equivalence}
  $\adjunction{\Triang}{\CMSOr}{\KanSimp}{\nerve{\triang}}$ is a Quillen equivalence.
\end{theorem}
\begin{proof}
  By the decomposition $\Triang \cong \subst{\insimp\!}\lan{\incube}$ (\cref{middle-adjoint-is-triangulation}).
\end{proof}

In particular, both $\CMSIdComp$ and $\CMSOr$ present $\inftyGpd$.

\subsection{Equality with the test model structure}
\label{sec:test}

It is worth remarking that there is a model structure on $\PSh{\Or}$ already known to present $\inftyGpd$,
namely its \emph{test model structure}. Constructed by Cisinski~\cite{cisinski06} based on Grothendieck's
theory of test categories~\cite{grothendieck83}, a test model structure exists on the category of presheaves
$\PSh{\CatC}$ over any \emph{local test category} $\CatC$. If $\CatC$ is moreover a \emph{test category}, then
this model structure is Quillen equivalent to $\KanSimp$.

Buchholtz and Morehouse observe that $\Or$, among various other cube categories, is a test category
\cite[Corollary 3]{buchholtz17}.%
\footnote{Maltsiniotis~\cite{maltsiniotis09} also observed that a cube category with one connection is a
  strict test category, but a different one: the subcategory of $\Or$ generated by faces, degeneracies, and
  connections, \ie, not including diagonals and permutations.}
Thus it supports a model structure presenting $\inftyGpd$. However, it has not been established whether this
model structure is constructive or compatible with a model of homotopy type theory. Cisinski~\cite{cisinski14}
has shown that the test model structure on an elegant strict Reedy local test category is type-theoretic in
the sense of Shulman~\cite[Definition 6.1]{shulman19}, but the strictness requirement prevents application of
this result to any cube category with permutations (or any non-Reedy category).

By virtue of the Quillen equivalences to $\KanSimp$ already established, we know that $\CMSOr$ and
$\CMSIdComp$ are Quillen equivalent to the test model structures on their respective base categories. Here we
check that they are in fact identical, adapting an argument of Streicher and Weinberger
\cite[\S5]{streicher21}.

We must begin by recalling the main definitions of test category theory. For more detail, we refer the reader
to Maltsiniotis~\cite{maltsiniotis05}, Cisinski \cite{cisinski06}, or Jardine~\cite{jardine06}. The foundation
of test category theory that we can relate presheaves on an arbitrary base category $\CatC$ with simplicial
sets by way of the category of small categories, $\Cat$. We write $\nerve{\Simp} \co \Cat \to \PSh{\Simp}$ for
the nerve of the inclusion $\Simp \to \Cat$.

\begin{definition}
  Given a small category $\CatC$, write $\incat{\CatC} \co \CatC \to \Cat$ for the slice category functor
  $a \mapsto \Slice{\CatC}{a}$. We have an induced nerve functor
  $\subst{\incat{\CatC}} \co \Cat \to \PSh{\CatC}$. As $\Cat$ is cocomplete, this functor has a left adjoint
  $\PSh{\CatC} \to \Cat$, for which we also write $\incat{\CatC}$.
\end{definition}

The composite $\nerve{\Simp}\incat{\CatC} \co \PSh{\CatC} \to \PSh{\Simp}$ is the means by which we can
inherit a model structure on $\PSh{\CatC}$ from $\KanSimp$ under appropriate conditions.

\begin{remark}
  The definitions and results of Cisinski that we cite below are typically parameterized by an arbitrary
  \emph{basic localizer}~\cite[D\'{e}finition 3.3.2]{cisinski06}, a class of functors to be regarded as the
  weak equivalences in $\Cat$. We always instantiate with the \emph{minimal} basic localizer $\CW_\infty$: the
  class of functors $f \co \CatC \to \CatD$ such that
  $\nerve{\Simp} f \co \nerve{\Simp}\CatC \to \nerve{\Simp}\CatD$ is a weak equivalence of
  $\KanSimp$~\cite[Corollaire 4.2.19]{cisinski06}.
\end{remark}

\begin{definition}[{{\thmcite[\S3.3.3 and D\'{e}finition 4.1.3]{cisinski06}}}]
  We say $X \in \PSh{\CatC}$ is \emph{aspheric} if $\nerve{\Simp}\incat{\CatC}X \in \PSh{\Simp}$ is weakly
  contractible in $\KanSimp$.
\end{definition}

\begin{definition}[{{\thmcite[D\'efinitions 4.1.8 and 4.1.12]{cisinski06}}}]
  A small category $\CatC$ is
  \begin{itemize}
  \item a \emph{weak test category} if $\subst{\incat{\CatC}}\CatD$ is aspheric for every $\CatD$
    with a terminal object;
  \item a \emph{local test category} if $\Slice{\CatC}{a}$ is a weak test category for all $a \in \CatC$;
  \item a \emph{test category} if it is both a weak and local test category.
  \end{itemize}
\end{definition}

\begin{proposition}[{{\thmcite[Corollaire 4.2.18]{cisinski06}}}]
  \label{test-model-structure}
  Let $\CatC$ be a local test category. There is a model structure on $\PSh{\CatC}$ in which
  \begin{itemize}
  \item the cofibrations are the monomorphisms;
  \item the weak equivalences are the maps sent by $\nerve{\Simp}\incat{\CatC}$ to a weak equivalence of
    $\KanSimp$.
  \end{itemize}
  We write $\Test{\CatC}$ for this model category.
\qed
\end{proposition}

\begin{remark}
  \label{test-simplicial-is-kan-quillen}
  The test model structure $\Test{\Simp}$ coincides with $\KanSimp$. A proof is contained in the proof of
  \cite[Corollaire 4.2.19]{cisinski06}: the class of weak equivalences of $\Test{\Simp}$ is by definition the
  preimage $\nerve{\Simp}^{-1}\CW_\infty$, which is the minimal test $\Simp$-localizer by Th\'{e}or\`eme
  4.2.15, and said localizer is the class of weak equivalences of $\KanSimp$ by Corollaire 2.1.21 and
  Proposition 3.4.25.
\end{remark}

Note that whereas cubical-type model structures come with explicit characterizations of their cofibrations and
fibrations (or rather generating trivial cofibrations), the test model structure comes with explicit
descriptions of its cofibrations and weak equivalences. In general, $\Test{\CatC}$ is Quillen equivalent to a
slice of $\KanSimp$, namely $\KanSimp/\nerve{\Simp}\CatC$ \cite[Corollaire 4.4.20]{cisinski06}. When $\CatC$
is a test category, $\nerve{\Simp}\CatC$ is weakly contractible, and so we have an equivalence to $\KanSimp$
itself.

We recall the argument used by Buchholtz and Morehouse~\cite[Theorem 1]{buchholtz17} to show that $\Or$ is a
test category---actually a \emph{strict} test category.

\begin{definition}[{{\thmcite[\S4.3.1, Proposition 4.3.2, \S4.3.3]{cisinski06}}}]  We say a category $\CatC$ is \emph{totally aspheric} if it is non-empty and $\yo{a} \times \yo{b}$ is
  aspheric for every $a,b \in \CatC$. A test category that is totally aspheric is called a \emph{strict test
    category}.
\end{definition}

Any representable is aspheric: the category $\incat{\CatC}(\yo{a})$ has a terminal object, thus a natural
transformation from its identity functor to a constant functor, and this induces a contracting homotopy on
$\nerve{\Simp}\incat{\CatC}(\yo{a})$. Thus, any category with binary products is totally aspheric.

The following result originates in~\cite[Section 44(c)]{grothendieck83} and is invoked in~\cite{buchholtz17}
for a broad class of cube categories.

\begin{proposition}[{{\thmcite[Proposition 4.3.4]{cisinski06}}}]
  Let $\CatC$ be a totally aspheric category. If $\PSh{\CatC}$ contains an aspheric presheaf $I$ with disjoint
  maps $e_0,e_1 \co 1 \to I$, then $\CatC$ is a strict test category.
\qed
\end{proposition}

In particular, both $\Or$ and $\IdCompOr$ are strict test categories. To relate their test model structures to
$\KanSimp$, we recall the notion of aspheric functor.

\begin{definition}[{{\thmcite[\S3.3.3, Proposition 4.2.23(a$\Leftrightarrow$b$''$)]{cisinski06}}}]
  A functor $u \co \CatC \to \CatD$ is \emph{aspheric} if for every $d \in \CatD$,
  the presheaf $\subst{u}(\yo{d})$ is aspheric.  \qed
\end{definition}

An aspheric functor $u \co \CatC \to \CatD$ between test categories induces a Quillen equivalence
$\subst{u} \dashv \ran{u}$ between their test model structures~\cite[Proposition 4.2.24]{cisinski06}. For our
purposes, the more relevant property is the following immediate consequence.

\begin{proposition}[{{\thmcite[Proposition 4.2.23(d)]{cisinski06}}}]
  \label{aspheric-functor-weqs}
  Let $u \co \CatC \to \CatD$ be an aspheric functor between two test categories. Then a map $f$ in
  $\PSh{\CatD}$ is a weak equivalence in $\Test{\CatD}$ if and only if $\subst{u}f$ is a weak equivalence in
  $\Test{\CatC}$.
\end{proposition}

\begin{lemma}
  \label{idempotent-completion-aspheric}
  Any idempotent completion $i \co \CatC \to \IdComp{\CatC}$ is aspheric.
\end{lemma}
\begin{proof}
  Any $A \in \IdComp{\CatC}$ is a retract of $ia$ for some $a \in \CatC$. Then $\subst{i}\yo{A}$ is likewise a
  retract of $\subst{i}\yo{(ia)} \cong \yo{a}$, thus aspheric by
  \cref{homotopy-retracts-preserve-weakly-contractible}.
\end{proof}

\begin{lemma}
  \label{insimp-aspheric}
  $\insimp \co \Simp \to \IdCompOr$ is aspheric.
\end{lemma}
\begin{proof}
  For any $[1]^n \in \IdCompOr$, we have $\subst{\insimp\!}\yo{[1]^n} \cong (\simp{1})^n$. As $\Simp$ is a
  strict test category~\cite[Proposition 1.6.14]{maltsiniotis05}, any finite product of representables in
  $\PSh{\Simp}$ is aspheric~\cite[Proposition 4.3.2(b)]{cisinski06}.
\end{proof}
\begin{lemma}
  \label{cms-idcompor-weqs}
  A map $f$ in $\PSh{\IdCompOr}$ is a weak equivalence in $\CMSIdComp$ if and only if $\subst{\insimp\!}f$ is a
  weak equivalence in $\KanSimp$.
\end{lemma}
\begin{proof}
  Any left Quillen equivalence both preserves (Ken Brown's lemma) and reflects \cite[Corollary
  1.3.16]{hovey99} weak equivalences between cofibrant objects, so this follows from
  \cref{cms-right-quillen-equivalence}.
\end{proof}

\begin{theorem}
  The model structures $\TestIdComp$ and $\CMSIdComp$ are identical.
\end{theorem}
\begin{proof}
  As they have the same cofibrations, it suffices to show they have the same weak equivalences. This follows
  from \cref{aspheric-functor-weqs} and \cref{insimp-aspheric} (together with
  \cref{test-simplicial-is-kan-quillen}) and \cref{cms-idcompor-weqs}.
\end{proof}

\begin{corollary}
  The model structures $\TestOr$ and $\CMSOr$ are identical.
\end{corollary}
\begin{proof}
  Again, it suffices to show they have the same weak equivalences. By
  \cref{aspheric-functor-weqs,idempotent-completion-aspheric}, a map $f$ is a weak equivalence in $\TestOr$ if
  and only if $\lan{\incube}f$ is a weak equivalence in $\TestIdComp$. Likewise, $f$ is a weak equivalence in
  $\CMSOr$ if and only if $\lan{\incube}f$ is a weak equivalence in $\CMSIdComp$.
\end{proof}

These results can also be read as characterizations of the fibrations in the test model structures:

\begin{corollary}
  The fibrations in $\TestIdComp$ and $\TestOr$ are those maps lifting against $\Gd_k \lbtimes m$ for all
  $k \in \{0,1\}$ and $m \co A \mono B$.  \qed
\end{corollary}

\appendix

\section{Negative results}
\label{sec:negative}

Here we collect a pair of negative results concerning the existence of (relative) Reedy structures on
(idempotent completions of) cube categories. In \cref{sec:negative-disjunctive}, we check that $\Or$ and
$\IdCompOr$ are not Reedy categories, motivating this paper's approach. \cref{sec:negative-dedekind} concerns
the limits of relative elegance: we show that the Dedekind cube category does not embed elegantly in any Reedy
category.

\subsection{Semilattice cubes}
\label{sec:negative-disjunctive}

The non-existence of a Reedy structure on $\Or$ is easily verified: every Reedy category is idempotent
complete \cite[Proposition 6.5.9]{borceux94}, but we have seen in \cref{sec:idempotent-completion} that $\Or$
is not. The map $(x,y) \mapsto (x, x \lor y) \co [1]^2 \to [1]^2$ is a simple example of an idempotent with no
splitting in $\Or$.

It is therefore more appropriate to ask if the cube category's idempotent completion $\IdCompOr$, which we
have characterized as the full subcategory of $\SLat$ consisting of finite inhabited distributive lattices
(\cref{def:idcompor}), is Reedy. If this were so, we could simply study $\PSh{\Or}$ by way of the equivalent
$\PSh{\IdCompOr}$. However, this is not the case:

\begin{proposition} \label{idcompor-not-reedy}
  There is no Reedy structure on $\IdCompOr$.
\end{proposition}
\begin{proof}
  We consider the following morphism $u \co [1]^3 \to [1]^3$:
  \[
    u(x,y,z) \defeq (x \lor y, y \lor z, z \lor x) \rlap{.}
  \]
  For intuition, note that the image of $u$ computed in $\SLat$ is the non-distributive \emph{diamond lattice}
  $\mathfrak{M}_3$.

  Suppose that we do have a Reedy structure on $\IdCompOr$. The unique map $[1]^2 \to 1$ is split
  epic and thus a lowering map (\cref{split-epi-is-lowering}). Every raising map must have the right
  lifting property against this map, so every raising map is monic.\footnote{If we only want to show
    $\IdCompOr$ is not \emph{elegant} Reedy, we are already done, as observed in \cite[Theorem
    8.12(2)]{campion23}: if $\IdCompOr$ were elegant we would have a (split epi, mono) factorization of $u$,
    which would necessarily be preserved by the inclusion $\IdCompOr \to \SLat$, but $u$'s (split
    epi, mono) factorization in $\SLat$ is $\mathfrak{M}_3$.} Take a Reedy factorization of $u$:
  \[
    \begin{tikzcd}
      {[1]^3} \ar[lowering=above]{dr}[below left]{f} \ar{rr}{u} && {[1]^3} \rlap{.} \\
      & L \ar[raising=above,tail]{ur}[below right]{m}
    \end{tikzcd}
  \]
  $L$ is a sub-semilattice of $[1]^3$ that forms a distributive lattice and contains the image of $u$. Note
  that $\lor$, $\bot$, and $\top$ are computed in $L$ as in $[1]^3$, but $\land$ may not be; we write
  $\land_L$ for the meet in $L$. We show that in fact $L = [1]^3$.

  Consider the set $S \defeq \{011, 101, 110\} \subseteq L \subseteq [1]^3$. Let $v,v',v''$ be any pairwise
  distinct elements of $S$ and note that we have
  \[
    (v \land_L v') \lor (v \land_L v'') = v \land_L (v' \lor v'') = v \land_L \top = v.
  \]
  This implies the following.
  \begin{enumerate}[label=(\alph*)]
  \item $v \land_L v' \neq v \land_L v''$: otherwise we have
    $(v \land_L v') \lor (v \land_L v'') = v \land_L v''$ and thus $v = v \land_L v''$, but $v$ and $v''$
    are incomparable.
  \item $v \land_L v' \neq \bot$: otherwise we again have
    $(v \land_L v') \lor (v \land_L v'') = v \land_L v''$.
  \end{enumerate}
  Thus the meets $011 \land_L 101$, $011 \land_L 110$, and $011 \land_L 110$ are pairwise distinct and lie
  outside the image of $u$, which by a cardinality argument implies that $L$ is the whole of $[1]^3$.

  The lowering map $f$ of our supposed factorization must then be $u$ itself; it remains to show that $u$
  cannot be a lowering map.  Consider the semilattice morphism $t \co [1]^3 \to [2]$ defined by
  $t(x,y,z) \defeq x \lor 2y \lor 2z$. We have the following commutative diagram in $\IdCompOr$, where $d_1$
  and $s_1$ are the simplex face and degeneracy maps from \cref{def:simplex-degeneracy-face}:
  \[
    \begin{tikzcd}
      {[1]^3} \ar{d}[left]{u} \ar{r}{t} & {[2]} \ar{r}{s_1} & {[1]} \ar[raising,tail]{d}{d_1} \\
      {[1]^3} \ar{rr}[below]{t} && {[2]} \rlap{.}
    \end{tikzcd}
  \]
  The face map $d_1$ is split monic and therefore a raising map. If $u$ were a lowering map, this
  square would have a diagonal lift. But as $t$ is surjective, there can be no diagonal $[1]^3 \to [1]$ making
  the lower triangle commute.
\end{proof}

\subsection{Dedekind cubes}
\label{sec:negative-dedekind}

As mentioned in the introduction, it is an open question whether the cubical-type model structure for
presheaves on the Dedekind cube category $\Ded$ is equivalent to the Kan-Quillen model structure $\KanSimp$;
see \citeauthor{streicher21}\ \cite{streicher21} for further discussion. In this appendix, we show that $\Ded$
supports no relatively elegant embedding in a Reedy category, thus that our argument for $\Or$ admits no naive
adaptation to the two-connection case.

\begin{definition}
  The \emph{Dedekind cube category} $\Ded$ is the Lawvere theory of bounded distributive lattices.
\end{definition}

$\Ded$ admits an alternative description arising from the duality between finite bounded distributive lattices
and finite posets \cite{wraith93}, analogous to the description of $\Or$ as a full subcategory of $\SLat$:

\begin{proposition}
  $\Ded$ is equivalent to the full subcategory of $\Pos$ consisting of posets of the form $[1]^n$ for
  $n \in \BN$.
\qed
\end{proposition}

We will only need this latter description.

The Dedekind cube category attracted attention \cite{spitters16,sattler19,kapulkin20,streicher21,hackney22} in
the HoTT community following \citeauthor{cohen15}'s interpretation of HoTT in De Morgan cubical sets
\cite{cohen15}. As \citeauthor{orton18}\ note \cite[Remark 3.2]{orton18}, this interpretation does not require
all the structure of De Morgan cubes; in particular, it can be repeated with $\Ded$. The name ``Dedekind'' was
coined by Awodey in reference to the fact that the cardinality of $\Hom{\Ded}{[1]^n}{[1]}$ is the $n$th
Dedekind number.

\subsubsection{A no-go theorem}

We begin by identifying a property shared by all categories $\CatC$ with a relatively elegant functor
$i \co \CatC \to \CatR$; the contrapositive will show that no such functor exists out of $\Ded$.

\begin{definition}
  A \emph{sieve} on an object $a$ of a small category $\CatC$ is a set of morphisms
  $\CS \subseteq \Slice{\CatC}{a}$ such that $g \in \CS$ implies $gf \in \CS$ for any composable
  $f \in \CatC^\to$. We regard the collection $\Sieve{\CatC}{a}$ of sieves on $a \in \CatC$ as a poset ordered
  by inclusion. A sieve is \emph{principal} if it is of the form
  $\principal{f} \defeq \{gf \mid g \in \Slice{\CatC}{b} \}$ for some $f \co b \to a$; we write
  $\PSieve{\CatC}{a} \subseteq \Sieve{\CatC}{a}$ for the subposet of principal sieves on $a$.
\end{definition}

Recall that $\Sieve{\CatC}{a}$ is isomorphic to the poset of subobjects of $\yo{a} \in \PSh{\CatC}$. The
principal sieve $\principal{f}$ on a map $f \co b \to a$ corresponds to the subobject $\im f \mono
\yo{a}$. Given a relatively elegant $i \co \CatC \to \CatR$, the following lemma deduces a well-foundedness
property of these subobjects in $\PSh{\CatC}$ from the well-foundedness of the Reedy category $\CatR$.

\begin{lemma}
  \label{relative-elegant-principal-sieves-well-founded}
  Let $\CatC$ be a category, and let $\CatR$ be a Reedy category elegant relative to some
  $i \co \CatC \to \CatR$. Then for any $a \in \CatC$, there exists a strictly monotone map
  $d \co \PSieve{\CatC}{a} \to \BN$. In particular, $\PSieve{\CatC}{a}$ is well-founded.
\end{lemma}
\begin{proof}
  Given a principal sieve $\principal{f} \in \PSieve{\CatC}{a}$ generated by $f \co b \to a$, we define
  $d(\principal{f})$ to be the degree of $i(f)$, \ie, the degree of the intermediate object in its Reedy
  factorization. To see that this definition is independent of the choice of representative $f$ and that $d$
  is order-preserving, it suffices to check that for any $f \co b \to a$ and $f' \co b' \to a$, if
  $\principal{f'} \subseteq \principal{f}$ then $d(\principal{f'}) \leq d(\principal{f})$. If
  $\principal{f'} \subseteq \principal{f}$, then there exists some $g \co b' \to b$ such that $f' = fg$. Upon
  Reedy factorizing $i(f') = m'e'$ and $i(f) = me$, orthogonality gives us a map as shown:
  \[
    \begin{tikzcd}
      i(b') \ar[lowering=above]{d}[left]{e'} \ar{r}{i(g)} & i(b) \ar{r}{e} & c \ar[raising]{d}{m} \\
      c' \ar[dashed]{urr} \ar[raising=above]{rr}[below]{m'} && i(a) \rlap{.}
    \end{tikzcd}
  \]
  By \cref{reedy-cancellation}, the lift is a raising map, so
  $d(\principal{f}) = \deg{c'} \le \deg{c} = d(\principal{f'})$.

  To see that $d$ is strictly monotone, suppose that additionally $\deg{c'} = \deg{c}$. Then the diagonal
  above is an isomorphism. By $\CatR^-$-projectivity of $i(b)$ (\cref{relative-elegant-projective}) and
  fullness of $i$, we obtain a lift as below:
  \[
    \begin{tikzcd}
      & & i(b') \ar[lowering]{d}{e'} \\
      i(b) \ar[dashed]{urr}{i(h)} \ar{r}[below]{e} & c \ar{r}[below]{\cong} & c' \rlap{.}
    \end{tikzcd}
  \]
  Then $f = f'h$, so $\principal{f} \subseteq \principal{f'}$.
\end{proof}

\subsubsection{Principal sieves in Dedekind cubes}
\label{sec:dedekind-not-relative-elegant}

Now we show that the poset of principal sieves on $[1]^3 \in \Ded$ is not well-founded. We embed a poset model
of the circle $\Crown{n} \mono [1]^n$ in each cube, then exhibit a chain of subobjects of $[1]^n$
(for any $n \ge 3$) induced by maps $\cdots \to \Crown{n_2} \to \Crown{n_1} \to \Crown{n}$ that cannot
stabilize.

\begin{definition}
  The \emph{fence} $\Fence \in \Pos$ is the poset whose elements are integers and whose order is generated by
  the inequalities $i \le i - 1$ and $i \le i + 1$ for all even $i \in \BZ$.
\end{definition}

\begin{definition}
  The $n$th \emph{crown poset} $\Crown{n} \in \Pos$ is the quotient of $\Fence$ identifying $i,j \in \Fence$
  whenever $i = j \pmod{2n}$. We write $\crownproj{n} \co \Fence \to \Crown{n}$ for the quotient map.
\end{definition}

For example, $\Crown{4}$ is the following poset:
\[
  \begin{tikzcd}[row sep=large, column sep=small]
    1 & 3 & 5 & 7 \\
    0 \ar{u} \ar{urrr} & 2 \ar{u} \ar{ul} & 4 \ar{u} \ar{ul} & 6 \ar{u} \ar{ul} \rlap{.}
  \end{tikzcd}
\]

\begin{remark}
  Each crown poset is freely generated by a graph (though not the graphs usually known as \emph{crown graphs},
  which have more edges).
\end{remark}

The simplicial nerve $\nerve{\Simp}$ sends each crown poset to a simplicial set weakly equivalent to the
circle. As such, any map between crown posets can be associated a winding number. Concretely, we can define
the winding number on the level of posets as follows:

\begin{definition}
  Any poset map $f \co \Crown{m} \to \Crown{n}$ lifts to an endomap
  \[
    \begin{tikzcd}
      \Fence \ar{d}[left]{\crownproj{m}} \ar[dashed]{r}{\hat f} & \Fence \ar{d}{\crownproj{n}} \\
      \Crown{m} \ar{r}[below]{f} & \Crown{n}
    \end{tikzcd}
  \]
  which is unique modulo $2n$. The \emph{winding number} of $f$ is
  \begin{align*}
    \wind{f} &\defeq \frac{\hat f (2m) - \hat f (0)}{2n} \rlap{.}
  \end{align*}
\end{definition}

It is straightforward to check that $\wind{gf} = \wind{g}\wind{f}$ for
$\Crown{m} \overset{f}\to \Crown{n} \overset{g}\to \Crown{p}$, as we expect from a winding number. Because
$\Crown{m}$ is ``too short'' to wrap around $\Crown{n}$ when $m < n$, we have the following:

\begin{lemma}
  \label{crown-in-larger-crown-trivial}
  If $m < n$, then $\wind{f} = 0$ for any $f \co \Crown{m} \to \Crown{n}$.
\end{lemma}
\begin{proof}
  By induction, $\verts{\hat f (i) - \hat f (0)} \le i$ for every $i \in \BN$, so
  $\verts{\hat f (2m) - \hat f (0)} < 2n$.
\end{proof}

\begin{definition}
  For $n \ge 3$, define an poset embedding $c_n \co \Crown{n} \mono [1]^n$ by
  \begin{align*}
    c_n(i)_j = \left\{
    \begin{array}{ll}
      1 & \text{if $\floors{\frac{i}{2}} \le j \le \ceils{\frac{i}{2}}$} \\
      0 & \text{otherwise}
    \end{array}
    \right.
    \rlap{.}
  \end{align*}
\end{definition}

\begin{definition} \label{crown-extension} Given $m,n \ge 3$ and a monotone map
  $f \co \Crown{m} \to \Crown{n}$, define an extension
  \[
    \begin{tikzcd}
      \Crown{m} \ar[tail]{d}[left]{c_m} \ar{r}{f} & \Crown{n} \ar[tail]{d}{c_n} \\
      {[1]^m} \ar[dashed]{r}[below]{\overline f} & {[1]^n}
    \end{tikzcd}
  \]
  by setting
  \[
    \overline{f}(v) \defeq \left\{
      \begin{array}{ll}
        c_n(f(i)) &\text{if $v = c_m(i)$,} \\
        \bot &\text{if $v = \bot$,} \\
        \top &\text{otherwise.}
      \end{array}
    \right.
  \]
\end{definition}

The mapping $f \mapsto \overline{f}$ is the functorial action of a \emph{semifunctor} from the category of crown posets to $\Ded$: compositions are preserved, but not identities.

\begin{lemma} \label{crown-extension-pullback}
The diagram in \cref{crown-extension} is a pullback.
\end{lemma}
\begin{proof}
The three cases in the definition of $\overline{f}$ have disjoint values.
\end{proof}

\begin{theorem}
  \label{dedekind-not-relatively-elegant}
  There exists no Reedy category $\CatR$ with a fully faithful functor $i \co \Ded \to \CatR$ such that
  $\CatR$ is elegant relative to $i$.
\end{theorem}
\begin{proof}
  Suppose for sake of contradiction that we have some $i \co \Ded \to \CatR$ such that $\CatR$ is elegant
  relative to $i$. Choose any $n \ge 3$. For every $m \ge 2$ and $a \ge 1$, the identity function on $\Fence$
  induces a map $f_a \co \Crown{am} \to \Crown{m}$ with winding number $a$. We then have the following diagram
  in $\Pos$:
  \[
    \begin{tikzcd}
      \cdots \ar{r}{f_2} & \Crown{8n} \ar{d}{f_8} \ar{r}{f_2} & \Crown{4n} \ar{d}{f_4} \ar{r}{f_2} & \Crown{2n} \ar{d}{f_2} \\
      \cdots \ar{r}[below]{\id} & \Crown{n} \ar{r}[below]{\id} & \Crown{n} \ar{r}[below]{\id} & \Crown{n} \rlap{.}
    \end{tikzcd}
  \]
  Applying $\overline{(-)}$, we have a chain of principal sieves
  $\principal{\overline{f_2}} \supseteq \principal{\overline{f_4}} \supseteq \principal{\overline{f_8}}
  \supseteq \cdots$ on $[1]^n$. By \cref{relative-elegant-principal-sieves-well-founded}, this chain must
  stabilize; in particular, there must be some pair $a < b$ (both powers of 2) such that
  $\principal{\overline{f_a}} = \principal{\overline{f_b}}$. Then there exists a map
  \[
    \begin{tikzcd}
      {[1]^{a n}} \ar{dr}[below left]{\overline{f_a}} \ar[dashed]{rr}{g} & & {[1]^{b n}} \ar{dl}{\overline{f_b}}  \rlap{.} \\
      & {[1]^n}
    \end{tikzcd}
  \]
  By \cref{crown-extension-pullback}, we have an induced map of crown posets:
  \[
    \begin{tikzcd}
      \Crown{an} \ar{d}[left]{c_{an}} \ar[bend left]{rr}{f_a} \ar[dashed]{r}[below]{g'} & \Crown{bn} \pullback \ar{d}[left]{c_{bn}} \ar{r}[below]{f_b} & \Crown{n} \ar{d}{c_{n}} \\
      {[1]^{a n}} \ar[bend right]{rr}[below]{\overline{f_a}} \ar{r}{g} & {[1]^{b n}} \ar{r}{\overline{f_b}} & {[1]^n} \rlap{.}
    \end{tikzcd}
  \]
  But because $an < bn$, we must have $\wind{g'} = 0$ by \cref{crown-in-larger-crown-trivial}, which
  contradicts that $\wind{f_b} \wind{g'} = \wind{f_a} = a$.
\end{proof}

\begin{Backmatter}

\printbibliography

\printaddress

\end{Backmatter}

\end{document}